\newlength{\normalparindent}
\newlength{\normalparskip}
\newcommand{\setparstyle}{\setlength{\parindent}{\normalparindent}\setlength{\parskip}{\normalparskip}}%% Hyperref should be here, but likes to be loaded late
\ifxetex\usepackage{xltxtra}\fi
\ifluatex\usepackage{realscripts}\fi
\newcommand{\divisionfont}{\relax}
\newcommand{\blocktitlefont}{\relax}
\newcommand{\contentsfont}{\relax}
\newcommand{\tabularfont}{\relax}
\newcommand{\xreffont}{\relax}
\newcommand{\divisionnameptx}{\relax}%
\newcommand{\authorsptx}{\relax}%
\NewDocumentEnvironment{sectionptx}{mmmmmmm}
{%
\renewcommand{\divisionnameptx}{#1}%
\renewcommand{\authorsptx}{#5}%
\section[{#4}]{#2}%
\label{#7}%
}{}%
\NewDocumentEnvironment{subsectionptx}{mmmmmmm}
{%
\renewcommand{\divisionnameptx}{#1}%
\renewcommand{\authorsptx}{#5}%
\subsection[{#4}]{#2}%
\label{#7}%
}{}%
\NewDocumentEnvironment{references-section}{mmmmmmm}
{%
\renewcommand{\divisionnameptx}{#1}%
\renewcommand{\authorsptx}{#5}%
\section[{#4}]{#2}%
\label{#7}%
}{}%
\NewDocumentEnvironment{references-section-numberless}{mmmmmmm}
{%
\renewcommand{\divisionnameptx}{#1}%
\renewcommand{\authorsptx}{#5}%
\section*{#2}%
\addcontentsline{toc}{section}{#4}
\label{#7}%
}{}%
\titleformat{\part}[display]
{\divisionfont\Huge\bfseries\centering}{\divisionnameptx\space\thepart}{30pt}{\Huge#1}
[{\Large\centering\authorsptx}]
\titleformat{\chapter}[display]
{\divisionfont\huge\bfseries}{\divisionnameptx\space\thechapter}{20pt}{\Huge#1}
[{\Large\authorsptx}]
\titlespacing*{\chapter}{0pt}{50pt}{40pt}
\titleformat{\section}[hang]
{\divisionfont\Large\bfseries}{\thesection}{1ex}{#1}
[{\large\authorsptx}]
\titlespacing*{\section}{0pt}{3.5ex plus 1ex minus .2ex}{2.3ex plus .2ex}
\titleformat{\subsection}[hang]
{\divisionfont\large\bfseries}{\thesubsection}{1ex}{#1}
[{\normalsize\authorsptx}]
\titlespacing*{\subsection}{0pt}{3.25ex plus 1ex minus .2ex}{1.5ex plus .2ex}
\titleformat{\subsubsection}[hang]
{\divisionfont\normalsize\bfseries}{\thesubsubsection}{1em}{#1}
[{\small\authorsptx}]
\titlespacing*{\subsubsection}{0pt}{3.25ex plus 1ex minus .2ex}{1.5ex plus .2ex}
\titleformat{\paragraph}[hang]
{\divisionfont\normalsize\bfseries}{\theparagraph}{1em}{#1}
[{\small\authorsptx}]
\titlespacing*{\paragraph}{0pt}{3.25ex plus 1ex minus .2ex}{1.5em}
\newcommand{\terminology}[1]{\textbf{#1}}
\newcommand{\pubtitle}[1]{\textsl{#1}}
\newcommand{\lititle}[1]{{\slshape#1}}
\numberwithin{equation}{section}
\NewDocumentEnvironment{image}{mmmm}{\notblank{#4}{\leavevmode\nopagebreak\vspace{#4}}{}\begin{tcbimage}{#1}{#2}{#3}}{\end{tcbimage}%
}%% For improved tables
\newcommand{\hrulethin}  {\noalign{\hrule height 0.04em}}
\let\oldsetlength\setlength
\newlength{\Oldarrayrulewidth}
\newcommand{\crulethin}[1]%
{\noalign{\global\oldsetlength{\Oldarrayrulewidth}{\arrayrulewidth}}%
\noalign{\global\oldsetlength{\arrayrulewidth}{0.04em}}\cline{#1}%
\noalign{\global\oldsetlength{\arrayrulewidth}{\Oldarrayrulewidth}}}%
\newcommand{\crulemedium}[1]%
{\noalign{\global\oldsetlength{\Oldarrayrulewidth}{\arrayrulewidth}}%
\noalign{\global\oldsetlength{\arrayrulewidth}{0.07em}}\cline{#1}%
\noalign{\global\oldsetlength{\arrayrulewidth}{\Oldarrayrulewidth}}}
\newcommand{\crulethick}[1]%
{\noalign{\global\oldsetlength{\Oldarrayrulewidth}{\arrayrulewidth}}%
\noalign{\global\oldsetlength{\arrayrulewidth}{0.11em}}\cline{#1}%
\noalign{\global\oldsetlength{\arrayrulewidth}{\Oldarrayrulewidth}}}
\newcolumntype{A}{!{\vrule width 0.04em}}
\newcolumntype{B}{!{\vrule width 0.07em}}
\newcolumntype{C}{!{\vrule width 0.11em}}
\newtcolorbox{tabularbox}[3]{tabularboxstyle, left skip=#1\linewidth, width=#2\linewidth,}
\newlist{referencelist}{description}{4}
\setlist[referencelist]{leftmargin=!,labelwidth=!,labelsep=0ex,itemsep=1.0ex,topsep=1.0ex,partopsep=0pt,parsep=0pt}
\patchcmd\Hy@EveryPageBoxHook{\Hy@EveryPageAnchor}{\Hy@hypertexnamestrue\Hy@EveryPageAnchor}{}{\fail}
\newtcolorbox[auto counter, number within=section]{block}{}
\newtcolorbox[auto counter, number within=section]{project-distinct}{}
\newtcolorbox[auto counter, number within=tcb@cnt@block, number freestyle={\noexpand\thetcb@cnt@block(\noexpand\alph{\tcbcounter})}]{subdisplay}{}
\newtcolorbox[use counter from=block]{lemma}[4]{title={{#1~\thetcbcounter\notblank{#2#3}{\space}{}\notblank{#2}{\space#2}{}\notblank{#3}{\space(#3)}{}}}, phantomlabel={#4}, breakable, after={\par}, fontupper=\itshape, lemmastyle, }
\newtcolorbox[use counter from=block]{theorem}[4]{title={{#1~\thetcbcounter\notblank{#2#3}{\space}{}\notblank{#2}{\space#2}{}\notblank{#3}{\space(#3)}{}}}, phantomlabel={#4}, breakable, after={\par}, fontupper=\itshape, theoremstyle, }
\newtcolorbox[use counter from=block]{proposition}[4]{title={{#1~\thetcbcounter\notblank{#2#3}{\space}{}\notblank{#2}{\space#2}{}\notblank{#3}{\space(#3)}{}}}, phantomlabel={#4}, breakable, after={\par}, fontupper=\itshape, propositionstyle, }
\newtcolorbox{proof}[3]{title={\notblank{#2}{#2}{#1.}}, phantom={\hypertarget{#3}{}}, breakable, after={\par}, proofstyle, before upper app={\setparstyle} }
\newtcolorbox[use counter from=block]{principle}[4]{title={{#1~\thetcbcounter\notblank{#2#3}{\space}{}\notblank{#2}{\space#2}{}\notblank{#3}{\space(#3)}{}}}, phantomlabel={#4}, breakable, after={\par}, fontupper=\itshape, principlestyle, }
\newtcolorbox[use counter from=block]{conjecture}[4]{title={{#1~\thetcbcounter\notblank{#2#3}{\space}{}\notblank{#2}{\space#2}{}\notblank{#3}{\space(#3)}{}}}, phantomlabel={#4}, breakable, after={\par}, fontupper=\itshape, conjecturestyle, }
\newtcolorbox[use counter from=block]{axiom}[4]{title={{#1~\thetcbcounter\notblank{#2#3}{\space}{}\notblank{#2}{\space#2}{}\notblank{#3}{\space(#3)}{}}}, phantomlabel={#4}, breakable, after={\par}, fontupper=\itshape, axiomstyle, }
\newtcolorbox[use counter from=block]{heuristic}[4]{title={{#1~\thetcbcounter\notblank{#2#3}{\space}{}\notblank{#2}{\space#2}{}\notblank{#3}{\space(#3)}{}}}, phantomlabel={#4}, breakable, after={\par}, fontupper=\itshape, heuristicstyle, }
\newtcolorbox[use counter from=block]{note}[3]{title={{#1~\thetcbcounter\notblank{#2}{\space\space#2}{}}}, phantomlabel={#3}, breakable, after={\par}, notestyle, }
\newtcolorbox[use counter from=block]{warning}[3]{title={{#1~\thetcbcounter\notblank{#2}{\space\space#2}{}}}, phantomlabel={#3}, breakable, after={\par}, warningstyle, }
\newtcolorbox[use counter from=block]{observation}[3]{title={{#1~\thetcbcounter\notblank{#2}{\space\space#2}{}}}, phantomlabel={#3}, breakable, after={\par}, observationstyle, }
\newtcolorbox[use counter from=block]{problem}[3]{title={{#1~\thetcbcounter\notblank{#2}{\space\space#2}{}}}, phantomlabel={#3}, breakable, after={\par}, problemstyle, }
\newtcolorbox[use counter from=block]{figureptx}[4]{lower separated=false, before lower={{\textbf{#1~\thetcbcounter}\space#2}}, phantomlabel={#3}, unbreakable, figureptxstyle, }
\newtcolorbox[use counter from=block]{tableptx}[4]{title={{\textbf{#1~\thetcbcounter}\space#2}}, phantomlabel={#3}, unbreakable, tableptxstyle, }
\NewDocumentEnvironment{introduction}{m}
{\notblank{#1}{\noindent\textbf{#1}\space}{}}{\par\medskip}
\titleformat{\subparagraph}[runin]{\normalfont\normalsize\bfseries}{\thesubparagraph}{1em}{#1}
\titlespacing*{\subparagraph}{0pt}{3.25ex plus 1ex minus .2ex}{1em}
\NewDocumentEnvironment{paragraphs}{mm}
{\subparagraph*{#1}\hypertarget{#2}{}}{}
\NewDocumentEnvironment{sidebyside}{mmmm}
  {\begin{tcbraster}
    [sbsstyle,raster columns=#1,
    raster left skip=#2\linewidth,raster right skip=#3\linewidth,raster column skip=#4\linewidth]}
  {\end{tcbraster}}
\newcommand{\R}{\mathbb R}
\newcommand{\abs}[1]{|#1|}
\newcommand{\oointerval}[2]{(#1, #2)}
\newcommand{\diag}{\operatorname{diag}}
\newcommand{\Tr}{\operatorname{Tr}}
\newcommand{\CbetaE}{\operatorname{C{\beta}E}}
\newcommand{\GbetaE}{\operatorname{G{\beta}E}}
\newcommand{\COE}{\operatorname{COE}}
\newcommand{\CUE}{\operatorname{CUE}}
\newcommand{\CSE}{\operatorname{CSE}}
\newcommand{\GOE}{\operatorname{GOE}}
\newcommand{\GUE}{\operatorname{GUE}}
\newcommand{\GSE}{\operatorname{GSE}}
\newcommand{\lt}{<}
\newcommand{\gt}{>}
\newcommand{\amp}{&}
\title{Currently there are no reasons to doubt\\the Riemann Hypothesis\\
{\large The zeta function beyond the realm of computation}}
\author{David W. Farmer
}
\date{}
\begin{document}
%% bottom alignment is explicit, since it normally depends on oneside, twoside
\raggedbottom
%% Target for xref to top-level element is document start
\hypertarget{root-1-2}{}
\maketitle
\thispagestyle{empty}
\renewcommand*{\abstractname}{Abstract}
\begin{abstract}
We examine published arguments which suggest that the Riemann Hypothesis may not be true. In each case we provide evidence to explain why the claimed argument does not provide a good reason to doubt the Riemann Hypothesis. The evidence we cite involves a mixture of theorems in analytic number theory, theorems in random matrix theory, and illustrative examples involving the characteristic polynomials of large random unitary matrices. Similar evidence is provided for four mistaken notions which appear repeatedly in the literature concerning computations of the zeta-function. A fundamental question which underlies some of the arguments is: what does the graph of the Riemann zeta-function look like in a neighborhood of its largest values? We explore that question in detail and provide a survey of results on the relationship between \(L\)-functions and the characteristic polynomials of random matrices. We highlight the key role played by the emergent phenomenon of carrier waves, which arise from fluctuations in the density of zeros. The main point of this paper is that it is possible to understand some aspects of the zeta function at large heights, but the computational evidence is misleading. \footnote{MSC2020: 11M26, 11M50\label{root-1-2-3-2-1-2}}%
\end{abstract}
\typeout{************************************************}
\typeout{Section 1 Introduction}
\typeout{************************************************}
\begin{sectionptx}{Section}{Introduction}{}{Introduction}{}{}{intro}
Should one believe the Riemann Hypothesis (RH)? Since it is a conjecture with no proposed roadmap to prove it, one point of view is that it should neither be believed nor disbelieved.  Yet many mathematicians have an opinion, presumably backed up by logical reasoning.%
\par
Here we consider all published arguments for doubting~RH.  A paper of Ivić~[\hyperlink{Iv1}{{\xreffont 72}}, \hyperlink{Iv2}{{\xreffont 73}}] lists 4 reasons, and a paper of Blanc~\hyperlink{Blanc1}{[{\xreffont 18}]}  provides a 5th reason.  Three of those reasons involve speculation about the distribution of zeros and their relationship to the value distribution of the \(\zeta\)-function.  The fundamental question there is:  what does the \(\zeta\)-function look like in a neighborhood of its largest values?  The majority of this paper is a survey of prior results, and speculations and heuristics based on those results, which lead to an answer to that question.%
\par
Our primary goal is to provide intuition and to be persuasive.  The arguments against RH generally take the form ``It would be surprising if \(\mathbf{X}\).'' So, the burden we bear in refuting that argument is to give good reasons why \(\mathbf{X}\) is not surprising. To crystallize our main points we present 33 \emph{Principles} which we hope are also useful for future reference.%
\par
Is the purpose of this paper to persuade that RH is true?  Certainly not.  But perhaps those who continue to doubt RH will realize that their belief is not based on good evidence.  As for those who believe~RH: perhaps someone will write a companion paper: \emph{Currently there are no good reasons to believe the Riemann Hypothesis}. Note the subtle difference from the opposite of the title to this paper. Also useful would be a paper explaining why every currently known equivalence to RH is unlikely to be helpful for proving~RH (as in~\hyperlink{jensen}{[{\xreffont 47}]}).%
\par
The \(\zeta\)-function is the simplest example of an \emph{L-function}. All \(L\)-functions have properties similar to the \(\zeta\)-function, and all \(L\)-functions have an analogue of the Riemann Hypothesis.  As much as possible we try to discuss the \(\zeta\)-function in isolation, but in a few places it is necessary to expand our perspective.  We attempt to keep this paper self-contained, providing definitions and background as needed.%
\begin{paragraphs}{The themes.}{intro-7}%
Many of the surprising properties of the \(\zeta\)-function arise because there are different facets to the same object.  A classic example is the function usually denoted \(S(t)\): it is the error term in the counting function of the zeros of the \(\zeta\)-function, and it also is the imaginary part of the logarithm of~\(\zeta(\frac12 + i t)\). That equivalence is basic complex analysis.  But when pondering  a specific question about \(S(t)\), sometimes one perspective gives good intuition, and sometimes the other.  Combining perspectives can lead to surprising relations, such as \hyperref[prin_Z_S]{Principle~{\xreffont\ref{prin_Z_S}}}: in regions where \(\abs{\zeta(\frac12 + i t)}\) is large, \(S(t)\) tends to be decreasing.%
\par
A second theme is multiple levels of randomness.  We will make extensive use of the fact that the \(\zeta\)-function and characteristic polynomials of random unitary matrices have a similar sort of randomness (and also some differences, which we will describe). In the random matrix world, it is a theorem that everything just follows from the quadratic repulsion between the eigenvalues.  Yes really: everything. But (thank you to an anonymous referee for helping me see it this way), that perspective is reductive and it is beneficial to have other points of view, particularly on larger scales where the individual zeros are not visible. In the \(\zeta\)-function world this means that sometimes the primes will enter the discussion, and other times they will be ignored.%
\end{paragraphs}%
\begin{paragraphs}{The sections.}{intro-8}%
In \hyperref[overview]{Section~{\xreffont\ref{overview}}} we introduce the main theme by pulling together various ideas in the paper to answer the question: What does the zeta function look like beyond the realm of computation? The remainder of the paper is primarily devoted to providing intuition which will dispel misconceptions about the answer to that question \textemdash{} misconceptions largely due to poor extrapolation from available computations. In \hyperref[basics]{Section~{\xreffont\ref{basics}}} we provide basic definitions and background. In \hyperref[misleading]{Section~{\xreffont\ref{misleading}}} we describe four Mistaken Notions which appear repeatedly in discussions of computations of the \(\zeta\)-function, some of which play an important role in the claimed reasons to doubt~RH. In \hyperref[waves]{Section~{\xreffont\ref{waves}}} we describe the connection between the distribution of zeros and the size of the \(\zeta\)-function, introducing \emph{carrier waves} as a way to separate local from long-range behavior. In \hyperref[rmt_connections]{Section~{\xreffont\ref{rmt_connections}}} we briefly describe the connection between the \(\zeta\)-function and unitary polynomials, and in \hyperref[RMThistory]{Section~{\xreffont\ref{RMThistory}}} we provide an historical account of the connections to Random Matrix Theory. This leads to \hyperref[rmtwaves]{Section~{\xreffont\ref{rmtwaves}}}, where we use large unitary matrices to illustrate phenomena which occur far outside the range in which we can compute the \(\zeta\)-function.  By the end of \hyperref[rmtwaves]{Section~{\xreffont\ref{rmtwaves}}} we have a good understanding of the ``typical'' large values of the \(\zeta\)-function, and the relationship between the carrier wave, the density wave, and \(S(t)\), but it is not until \hyperref[extreme]{Section~{\xreffont\ref{extreme}}} that we address the most extreme values. The primes are only briefly mentioned up to this point, a shortcoming we address in \hyperref[primes]{Section~{\xreffont\ref{primes}}}. The zeros have dominated most of our discussion, but in \hyperref[multiscale]{Section~{\xreffont\ref{multiscale}}} we discuss randomness of the \(\zeta\)-function or characteristic polynomials without reference to zeros or eigenvalues. After all that preparation, in \hyperref[rhissues_Z]{Section~{\xreffont\ref{rhissues_Z}}} we use information from the prior sections to refute the three arguments against RH based on the distribution of zeros and values of the \(\zeta\)-function, and for completeness in \hyperref[rhissues_other]{Section~{\xreffont\ref{rhissues_other}}} we cite recent results to refute the other two arguments against~RH. Finally, in \hyperref[misleadingrevisited]{Section~{\xreffont\ref{misleadingrevisited}}} we use the Principles to explain why the Mistaken Notions of \hyperref[misleading]{Section~{\xreffont\ref{misleading}}} are, in fact, mistaken.%
\end{paragraphs}%
\begin{paragraphs}{Acknowledgments.}{intro-9}%
I thank Louis-Pierre Arguin, Juan Arias de Reyna, Emma Bailey, Sir Michael Berry, Philippe Blanc, Richard Brent, Brian Conrey, Jon Keating, Hugh Montgomery, Eero Saksman, Tim Trudgian, and Christian Webb for clarifying several points in this article. I also thank Jonathan Bober, Xavier Gourdon, and Ghaith Hiary for making available extensive data from their computations of the \(\zeta\)-function. In addition, useful suggestions from a referee led to many improvements. This paper was written in PreTeXt~\hyperlink{PTX}{[{\xreffont 91}]}.%
\end{paragraphs}%
\end{sectionptx}
\typeout{************************************************}
\typeout{Section 2 The \(\zeta\)-function for absurdly large inputs}
\typeout{************************************************}
\begin{sectionptx}{Section}{The \(\zeta\)-function for absurdly large inputs}{}{The \(\zeta\)-function for absurdly large inputs}{}{}{overview}
\begin{introduction}{}%
We present some of the main points in this paper in a thought experiment to give intuition about the behavior of the \(\zeta\)-function far beyond the range where it can be computed. It is hoped that the reader will view the claims in this overview with some skepticism, leading to closer scrutiny of the ideas and perspectives in the remainder of the paper.%
\end{introduction}%
\typeout{************************************************}
\typeout{Subsection 2.1 Snapshots of the \(\zeta\)-function}
\typeout{************************************************}
\begin{subsectionptx}{Subsection}{Snapshots of the \(\zeta\)-function}{}{Snapshots of the \(\zeta\)-function}{}{}{overview-3}
As described in \hyperref[basics]{Section~{\xreffont\ref{basics}}}, we will consider the function \(Z(t)\), which contains the same information as the Riemann \(\zeta\)-function, but it is a real-valued function of a real variable, so we can graph it. What might the graph of \(Z(t)\) look like for \(t \approx 100^{100^{100}}\)? Current methods fail well before \(10^{40}\), so such a computation is not remotely plausible. But suppose in some distant future a computer algebra package allowed one to graph the \(Z\)-function for such a large \(t\), on an interval of, for example, width \(40/(2 \pi \log(t/2\pi))\) \textemdash{} an interval which should contain around 40 zeros. By \hyperref[guehypothesis]{Principle~{\xreffont\ref{guehypothesis}}} and \hyperref[KSlaw]{Principle~{\xreffont\ref{KSlaw}}}, combined with \hyperref[prin_three_things]{Principle~{\xreffont\ref{prin_three_things}}} or \hyperref[short-long]{Principle~{\xreffont\ref{short-long}}}, it is plausible that the graph might look like this:%
\begin{figureptx}{Figure}{What \(Z(t)\) looks like, on some interval which is expected to contain around 40 zeros, near \(t\approx 100^{100^{100}}\). The tick marks are separated by the average gap between zeros at that height.}{fig_fictional}{}%
\begin{image}{0}{1}{0}{}%
\includegraphics[width=\linewidth]{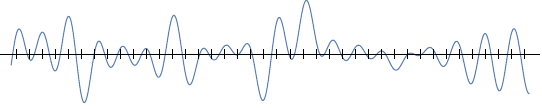}
\end{image}%
\tcblower
\end{figureptx}%
Neither axis in \hyperref[fig_fictional]{Figure~{\xreffont\ref{fig_fictional}}} has a scale, although we are told enough information to determine the width covered by the horizontal axis. The scale on the vertical axis is the mystery.%
\par
By \hyperref[prin_three_things]{Principle~{\xreffont\ref{prin_three_things}}}, the \(Z\)-function is close to being a polynomial.  In particular, locally the behavior of the \(Z\)-function is determined by its zeros, up to a local constant factor. So, if the relative spacing of the zeros in \hyperref[fig_fictional]{Figure~{\xreffont\ref{fig_fictional}}} is plausible for \(t\) of that size, then the given graph is accurate:  as is typical for computer algebra systems, the vertical scale is automatically adjusted to show the features of the graph.%
\par
By \hyperref[guehypothesis]{Principle~{\xreffont\ref{guehypothesis}}}, the distribution of zeros in \hyperref[fig_fictional]{Figure~{\xreffont\ref{fig_fictional}}} is indeed plausible, and furthermore we expect to find many intervals of that width at that height, on which the graph of the Z-function looks similar to \hyperref[fig_fictional]{Figure~{\xreffont\ref{fig_fictional}}}. The only thing we don't know is: what is the vertical scale of that graph?%
\par
By Selberg's Central Limit Theorem (CLT) \hyperref[selberg_gaussian]{({\xreffont\ref{selberg_gaussian}})}, \(\log Z(t)\) is normally distributed with mean zero and variance \(\sqrt{\frac12 \log\log t}\).  So, at height \(t\approx 100^{100^{100}}\), half the time \(\log Z(t)\) is likely to be of size \(\approx 15\), and the other half of the time it is likely to be~\(\approx -15\). Exponentiating, we find that the vertical scale in \hyperref[fig_fictional]{Figure~{\xreffont\ref{fig_fictional}}}, or in any other interval at that height where the graph looks like \hyperref[fig_fictional]{Figure~{\xreffont\ref{fig_fictional}}}, is likely to be around \(10^6\), or it is likely to be around~\(10^{-6}\), and both possibilities are equally probable. Both cases lead to the same snapshot, so we see that the snapshot depends only on the local arrangement of zeros. In particular, the local arrangement of zeros is not the main factor in the size of the \(\zeta\)-function.%
\par
This fundamental fact, that the size of the \(\zeta\)-function is not strongly dependent on the local distribution of zeros, is hard to believe when your intuition only comes from computations in the modest range currently accessible by computers.%
\begin{paragraphs}{Snapshots do not show the big picture.}{overview-3-9}%
By \terminology{snapshot} we refer to an image showing the graph of a function, having an aspect ratio between \(1/10\) and \(2\), over an interval where the function has a moderate number of interesting features, with the vertical scale adjusted so that those features are visible.  \hyperref[fig_fictional]{Figure~{\xreffont\ref{fig_fictional}}} is an example, as are the numerous other graphs in this paper.  A snapshot of the \(\zeta\)-function typically covers an interval containing between 10 and 100 zeros.  As mentioned above, a snapshot of the \(Z\)-function depends only on the location of the nearby zeros.  More precisely, it depends only on the relative sizes of the gaps between zeros. Near the middle of the snapshot all that matters is the zero gaps within the snapshot; near the edges, the gaps immediately outside the snapshot are relevant.%
\par
Any specific (finite) sequence of normalized zero gaps will never happen (except once, if it is specifically constructed in that way). But precision is not relevant, because a snapshot is a continuous function of the zero gaps: a very small change in the relative spacing of the zeros will cause a small change in the appearance of the graph. Given an open neighborhood of any specific (finite) gap sequence, such an approximate gap sequence will occur infinitely many times, and in fact will occur a positive proportion of the time among all normalized gap sequences of that length \textemdash{} the precise proportion depending on the given sequence and the size of the neighborhood. See \hyperref[guehypothesis]{Principle~{\xreffont\ref{guehypothesis}}} or \hyperref[KSlaw]{Principle~{\xreffont\ref{KSlaw}}}. For example, \hyperref[fig_brentZ]{Figure~{\xreffont\ref{fig_brentZ}}} will occur a positive proportion of the time (to within the resolution of the human eye) as a snapshot of the \(Z\)-function.%
\par
To summarize:%
\begin{principle}{Principle}{Snapshots of the \(\zeta\)-function are well understood.}{}{snapshots}%
Any given finite sequence of normalized gaps between zeros will (approximately) occur a positive proportion of the time, as predicted by the Random Matrix Model for zeros of the zeta function.  The appearance of the graph of the \(Z\)-function over that region, ignoring the vertical scale, is primarily determined by the relative spacing of those zeros.%
\end{principle}
In particular, one does not expect to see anything surprising when looking at a new snapshot of the \(Z\)-function.%
\end{paragraphs}%
\par\medskip
What counts as ``surprising'' changes over time.   In the early computations of zeros of the \(\zeta\)-function, it was surprising that occasionally two zeros would be very close together, known as \terminology{Lehmer pairs}. Today such pairs of close zeros are expected, and those occur at the frequency predicted by \hyperref[guehypothesis]{Principle~{\xreffont\ref{guehypothesis}}}.%
\par
In this section we have viewed the \(\zeta\)-function on an interval as a random function, modulo an overall scaling factor.  Equivalently, we are looking at a handful of consecutive zeros. For much of this paper, that perspective is sufficient for our purposes. In \hyperref[gmc]{Subsection~{\xreffont\ref{gmc}}} we briefly discuss a random model for the \(\zeta\)-function which goes beyond snapshots to also incorporate the vertical scale.%
\end{subsectionptx}
\end{sectionptx}
\typeout{************************************************}
\typeout{Section 3 Background on the \(\zeta\)- function}
\typeout{************************************************}
\begin{sectionptx}{Section}{Background on the \(\zeta\)- function}{}{Background on the \(\zeta\)- function}{}{}{basics}
\begin{introduction}{}%
The Riemann zeta-function, which we will call the \(\zeta\)-function, is defined by%
\begin{equation}
\zeta(s)=\sum_{n=1}^\infty \frac{1}{n^s}\label{basics-2-1-2}
\end{equation}
for \(\sigma \gt 1\), where \(s=\sigma + i t\) is a complex variable. The \(\zeta\)-function has a meromorphic continuation to the complex plane, with a simple pole at \(s=1\) with residue~\(1\).%
\par
The \(\zeta\)-function has a symmetry, known as the \terminology{functional equation}, which can be expressed in several ways. With%
\begin{equation}
X(s) := \pi^{s-\frac12}\frac{\Gamma(\frac12 - \frac{1}{2} s)}{\Gamma(\frac12 s)}\label{basics-2-2-3}
\end{equation}
where \(\Gamma\) is the Euler Gamma-function, we have%
\begin{equation}
\zeta(s) = X(s)\zeta(1-s)\text{,}\label{zeta_fe}
\end{equation}
or equivalently%
\begin{align}
\xi(s) \mathstrut :=\mathstrut\amp \tfrac12 s (s-1)\pi^{-\frac12 s}\Gamma(\tfrac12 s) \zeta(s)\label{eqn_xi_def}\\
=\mathstrut\amp \xi(1-s)\text{,}\label{basics-2-2-6-2}
\end{align}
or equivalently%
\begin{equation}
Z(t):= X(\tfrac12 + i t)^{-\frac12} \zeta(\tfrac12 + it)
\ \ \ \ \text{is real if}\ \ \ \ t\in \R\text{.}\label{eqn_Zdef}
\end{equation}
The factor \(\tfrac12 s (s-1)\) in \hyperref[eqn_xi_def]{({\xreffont\ref{eqn_xi_def}})} is irrelevant to the invariance under \(s \leftrightarrow 1-s\), but it is traditionally included so that \(\xi(s)\) is an entire function. In \hyperref[eqn_Zdef]{({\xreffont\ref{eqn_Zdef}})} the square root is chosen so that \(Z(0) = \zeta(\frac12) \approx -1.46\) and \(Z(t)\) is analytic for \(\abs{\Im(t)} \lt \frac12\). It is common to refer to \(|t|\), the magnitude of the imaginary part of \(s=\sigma + i t\), as the \terminology{height} when referring to the behavior of \(\zeta(s)\) or~\(Z(t)\) in a particular region.%
\par
The \terminology{Hardy Z-function} \hyperref[eqn_Zdef]{({\xreffont\ref{eqn_Zdef}})} is useful because it can be graphed, and it tells us essentially everything we might want to know about the \(\zeta\)-function because \(|Z(t)| = |\zeta(\frac12 + i t)|\) if \(t \in \R\). \hyperref[fig_z5000]{Figure~{\xreffont\ref{fig_z5000}}} shows \(Z(t)\) and \(\log\abs{Z(t)}\) for \(5429.29 \lt t \lt 5466.44\), along with the function \(S(t)\) which we will introduce shortly.%
\begin{figureptx}{Figure}{\(Z(t)\), \(\log\abs{Z(t)}\), and \(S(t)\) near the 5000th zero \(\gamma_{5000} \approx 5447.86\).}{fig_z5000}{}%
\begin{image}{0}{1}{0}{}%
\includegraphics[width=\linewidth]{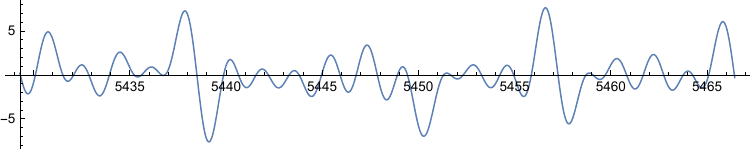}
\end{image}%
\begin{image}{0}{1}{0}{}%
\includegraphics[width=\linewidth]{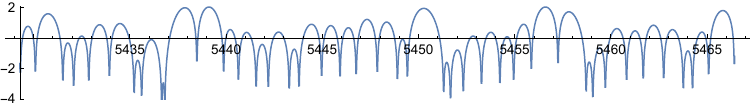}
\end{image}%
\begin{image}{0}{1}{0}{}%
\includegraphics[width=\linewidth]{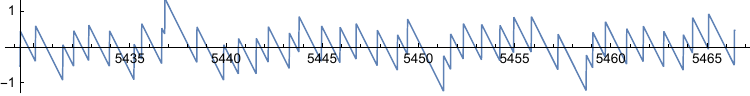}
\end{image}%
\tcblower
\end{figureptx}%
The critical facts (pun intended) about \(Z(t)\) are that it is smooth, and if \(t\) is real then \(Z(t)\) is real and \(\abs{Z(t)} = \abs{\zeta(\frac12 + it)}\).  Those facts do not uniquely determine \(Z(t)\): there is still a global choice of \(\pm 1\).  One might ask whether the choice matters, and if it does, is the standard choice for the square-root in \hyperref[eqn_Zdef]{({\xreffont\ref{eqn_Zdef}})} the correct option?%
\begin{principle}{Principle}{}{}{hardyZpm}%
\(Z(t)\) is statistically indistinguishable from \(\,-Z(t)\).%
\end{principle}
In other words, any measure of distance between probability distributions cannot distinguish between the distributions of \(Z(t)\) and \(\,-Z(t)\).%
\par
Thus, the global choice of sign for \(Z(t)\) does not matter. Indeed, \(\xi(\frac12 + it)\) is also real for \(t \in \R\), and has the same critical zeros as \(Z(t)\), but it has the opposite sign of \(Z(t)\). So, the standard normalizations for those functions disagree with each other. A consequence of \hyperref[hardyZpm]{Principle~{\xreffont\ref{hardyZpm}}} is that if \(k\) is a non-negative integer then%
\begin{equation}
\int_T^{2T} Z(t)^{2 k + 1} \, dt = o(T)\text{,}\label{Zoddmoment}
\end{equation}
and as \(T\to\infty\) that integral changes sign infinitely many times. Equation \hyperref[Zoddmoment]{({\xreffont\ref{Zoddmoment}})} is a theorem for \(k=0\): use the Riemann-Siegel formula \hyperref[RiemannSiegel]{({\xreffont\ref{RiemannSiegel}})} and integrate term-by-term. It is a conjecture for larger~\(k\).%
\par
\hyperref[hardyZpm]{Principle~{\xreffont\ref{hardyZpm}}} is a stronger statement than asserting that to leading order the value distribution of \(Z(t)\) is symmetric. In contrast, the average value of \(\zeta(\frac12 + it)\) is \(1\) in a very strong sense: if \(k\) is a non-negative integer then%
\begin{equation}
\int_T^{2T} \zeta(\tfrac12 +i t)^{k} \, dt \sim T\text{,}\label{zeta1average}
\end{equation}
which can be shown by moving the integral to the region of absolute convergence of the Dirichlet series and then integrating term-by-term. Such small biases (meaning: effects of size 1, when the main term is growing very slowly) are the underlying cause of many misconceptions arising from numerical computations, see \hyperref[gramaverage]{({\xreffont\ref{gramaverage}})} and \hyperref[bigOof1prin]{Principle~{\xreffont\ref{bigOof1prin}}}.%
\end{introduction}%
\typeout{************************************************}
\typeout{Subsection 3.1 We care about zeros because we care about primes}
\typeout{************************************************}
\begin{subsectionptx}{Subsection}{We care about zeros because we care about primes}{}{We care about zeros because we care about primes}{}{}{basics-3}
Riemann's great insight was that the zeros of the \(\zeta\)-function encode information about the primes.  Based at least partially on numerical computation, he formulated the conjecture which is now known as the \terminology{Riemann Hypothesis}:%
\begin{conjecture}{Conjecture}{The Riemann Hypothesis (RH).}{}{basics-3-3}%
The zeros of \(\zeta(s)\) with \(0 \lt \sigma \lt 1\) lie on the line \(\sigma=\frac12\); equivalently, all zeros of \(\xi(s)\) lie on the line \(\sigma = \frac12\); equivalently, all zeros of \(Z(t)\) with \(\abs{\Im(t)} \lt \frac12\) are real.%
\end{conjecture}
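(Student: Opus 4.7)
The plan is to adopt the Hilbert--Polya point of view and search for a self-adjoint operator $H$ on some Hilbert space whose eigenvalues are precisely the ordinates $\gamma_n$ of the nontrivial zeros $\frac12 + i\gamma_n$ of $\zeta$. Self-adjointness forces those eigenvalues to be real, which is exactly RH in the formulation via $Z(t)$ given in the statement. The random-matrix heuristics alluded to in the introduction, together with the shape of the Riemann--von Mangoldt counting formula and the fact that $|Z(t)| = |\zeta(\frac12+it)|$, make this the most natural programme to pursue.

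Concretely I would try to realize such an operator along one of the existing templates: as a quantization of the classical Hamiltonian $H=xp$ in the spirit of Berry--Keating, or as a trace construction on the adele class space following Connes. In parallel I would pursue Weil's explicit formula reformulation, in which RH is equivalent to the positivity of a certain Hermitian form on a space of test functions, so that the task becomes to produce a Hilbert space structure in which that form is manifestly non-negative. A third, independent front would be the Nyman--Beurling criterion, which turns RH into the density in $L^2(0,1)$ of certain dilations of the fractional-part function; the target there would be to construct the approximating linear combinations explicitly, perhaps by exploiting multiplicative structure on the interval.

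The hard part is not any one of these steps but all of them simultaneously: each programme only converts RH into an equivalent statement that has so far proved every bit as intractable as the original. In the function-field analogue, where the corresponding Riemann Hypothesis is a theorem, the proof runs through \'etale cohomology and intersection theory on a surface over a finite field, with the crucial positivity coming from a Hodge-type index inequality; there is no known number-field substitute for either the cohomology or that geometric positivity. An honest summary, consistent with the opening sentence of the paper itself, is that no proposed roadmap currently exists, and what I have sketched is a menu of promising philosophies rather than a plan that could actually be carried out. Accordingly I do not expect the author to supply a proof of the statement; the paper will instead spend its remaining sections arguing that the published reasons to doubt RH are not compelling.
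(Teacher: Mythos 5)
You correctly recognize that this is stated in the paper as a conjecture, not a theorem, and that the paper neither proves it nor attempts to; the paper's purpose is only to argue that the published reasons to \emph{doubt} RH are not compelling. Your survey of the Hilbert--Polya, Weil-positivity, Nyman--Beurling, and function-field perspectives is reasonable background, and your closing paragraph matches the paper's own stance exactly.
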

Riemann's original formulation involved yet another function: all the zeros of \(\Xi(t):= \xi(\frac12 + i t)\) are real. RH has been rigorously verified for \(t \lt 3\times 10^{12}\), involving more than \(12.3 \times 10^{12}\) zeros~\hyperlink{PlTr}{[{\xreffont 89}]}.%
\par
The zeros of \(\zeta(s)\) with \(0 \lt \sigma \lt 1\) are traditionally denoted by \(\rho = \beta + i \gamma\).  By the functional equation and the fact that \(\zeta(s)\) is real if \(s\) is real, if \(\rho = \beta + i \gamma\) is a zero of \(\zeta(s)\) then so is \(\rho = 1 - \beta + i \gamma\). So either \(\beta=\frac12\), as predicted by RH, or there is a pair of zeros of \(\zeta(s)\) (or \(\xi(s)\)) located symmetrically around the \terminology{critical line} \(\sigma = \frac12\). Equivalently, a failure of RH corresponds to a pair of complex conjugate zeros of \(Z(t)\).  Assuming RH, the zeros of \(\zeta(s)\) with positive imaginary part are denoted \(\frac12 + i \gamma_n\) with \(0 \lt \gamma_1 \lt \gamma_2 \lt \cdots\).  That notation would break down if there were repeated zeros, but no plausible reason has been given to expect a multiple zero.  Various discussions in this paper may implicitly assume all zeros are simple, an assumption which generally is irrelevant to the points being made.%
\par
There are \(41\) zeros of the \(\zeta\)-function visible in \hyperref[fig_z5000]{Figure~{\xreffont\ref{fig_z5000}}}. RH~and the simplicity of zeros imply that (for \(\abs{t} > 3\)) all local maxima of \(Z(t)\) (or \(\xi(\frac12 + it)\)) are strictly positive and all local minima are strictly negative, because the zeros of the derivative interlace the zeros of the function. This is visible in the first two graphs in \hyperref[fig_z5000]{Figure~{\xreffont\ref{fig_z5000}}}. The converse is not necessarily true \textemdash{} a failure of RH need not cause a negative maximum or a positive minimum, although that is mistakenly asserted in the official statement of the Riemann Hypothesis Millennium Problem~\hyperlink{millen}{[{\xreffont 23}]}.  The issue is the distance of a hypothetical non-critical zero from the critical line, see \hyperlink{allreal}{[{\xreffont 48}]}.   So, one cannot trivially deduce from the graph of \(Z(t)\) or \(\log(\abs{Z(t)})\) in \hyperref[fig_z5000]{Figure~{\xreffont\ref{fig_z5000}}} that RH holds for \(5430 \lt t \lt 5448 \).%
\par
It is possible to verify RH on an interval by a computer calculation, based on two properties of the \(\zeta\)-function.  One property we have already seen: \(Z(t)\) is real when \(t\) is real, so one can count critical zeros by looking for sign changes.  The other is that the zeros of the \(\zeta\)-function have a nice counting function with a small and computable error term.  Let \(N(T)\) be the number of zeros of \(\zeta(s)\) with \(0 \lt \gamma_n \le T\).  We have \hyperlink{T}{[{\xreffont 110}]}, assuming \(T\) is not the imaginary part of a zero of \(\zeta(s)\),%
\begin{equation}
N(T) = \frac{T}{2\pi} \log \frac{T}{2\pi e} + \tfrac78 + S(T) 
+  O(T^{-1})\label{NT}
\end{equation}
where%
\begin{equation}
S(T) = \frac{1}{\pi} \arg \zeta(\tfrac12 + i T) = \frac{1}{\pi} \Im \log \zeta(\tfrac12 + i T)\text{.}\label{ST}
\end{equation}
The argument in \hyperref[ST]{({\xreffont\ref{ST}})} is determined by continuous variation along the line \(2 + i t\) for \(0 \le t \le T\), and then along the line \(\sigma + i T\) for \(2 \ge \sigma \ge \frac12\).  One can take either \hyperref[NT]{({\xreffont\ref{NT}})} or \hyperref[ST]{({\xreffont\ref{ST}})} as the definition, with the other as a theorem.%
\par
We will see that the function \(S(t)\) grows very slowly. The third plot in \hyperref[fig_z5000]{Figure~{\xreffont\ref{fig_z5000}}} illustrates the basic properties of \(S(t)\):  it has a jump discontinuity (of height 1) at a simple critical zero, and where it is continuous it is approximately linear with slope \(-\frac{1}{2\pi}\log t\).%
\par
By \hyperref[NT]{({\xreffont\ref{NT}})} and \hyperref[ST]{({\xreffont\ref{ST}})} one can rigorously prove that RH holds on an interval: use sign changes to count real zeros of \(Z(t)\), then compute the change in \(S(t)\) to determine the change in \(N(t)\) and thus find the total number of zeros. Check if those two quantities are equal. A sophisticated version of this idea is known as \terminology{Turing's method}; see \hyperlink{BookerTuring}{[{\xreffont 25}]} for a modern treatment. In \hyperref[fig_z5000]{Figure~{\xreffont\ref{fig_z5000}}} a failure of RH would correspond to a jump discontinuity in \(S(t)\) of height~2, at a point where \(Z(t)\) does not have a zero. Such a jump does not occur in the graph of \(S(t)\), so by also considering either \(Z(t)\) or \(\log \abs{Z(t)}\), one can ``see'' a proof in \hyperref[fig_z5000]{Figure~{\xreffont\ref{fig_z5000}}} that RH is true for \(5430 \lt t \lt 5448\).%
\end{subsectionptx}
\typeout{************************************************}
\typeout{Subsection 3.2 Unfolding the zeros}
\typeout{************************************************}
\begin{subsectionptx}{Subsection}{Unfolding the zeros}{}{Unfolding the zeros}{}{}{basics-4}
By \hyperref[NT]{({\xreffont\ref{NT}})} the zeros at larger height are on average closer together. Specifically, at height \(T\) the average gap between zeros is~\(2\pi/\log T\), and \(\gamma_n \approx 2\pi n /\log n\). When discussing the statistics of the zeros, in particular the gaps between zeros, it is helpful to use the \terminology{normalized} or \terminology{unfolded} zeros \(\tilde{\gamma}_1, \tilde{\gamma}_2, \ldots\), where \(\tilde{\gamma}_n \sim n\) and \(\tilde{\gamma}_{n+1} - \tilde{\gamma}_n\) equals \(1\) on average. More precisely,  we set \(\tilde{\gamma}_n = \widetilde{N}(\gamma_n)\), where%
\begin{equation}
\widetilde{N}(T) = \frac{T}{2\pi} \log \frac{T}{2\pi e} + \tfrac78 \text{.}\label{basics-4-2-13}
\end{equation}
Often we think in terms of the approximation \(\tilde{\gamma}_n \approx \frac{1}{2\pi} \gamma_n \log \gamma_n\). We leave it as an exercise to determine the average value of \(\tilde{\gamma}_n - n\) as \(n\to \infty\). The answer is in \hyperref[spectralrigidity]{Subsection~{\xreffont\ref{spectralrigidity}}}.  That answer also explains the usual definition of \(S(t)\) in the case \(t\) is the imaginary part of a zero of the \(\zeta\)-function.%
\end{subsectionptx}
\typeout{************************************************}
\typeout{Subsection 3.3 The size of \(Z(t)\) and \(S(t)\)}
\typeout{************************************************}
\begin{subsectionptx}{Subsection}{The size of \(Z(t)\) and \(S(t)\)}{}{The size of \(Z(t)\) and \(S(t)\)}{}{}{basics-5}
The function \(S(t)\) is both the error term in the zero counting function \(N(t)\) and the imaginary part of~\(\log \zeta(\frac12 + i t)\).  Viewed as the error term for a function that counts irregularly spaced points, it is perhaps surprising that \(S(t)\) grows very slowly.  A consequence of that slow growth is that the zeros cannot stray too far from their expected location.  We will see that this has profound implications for the behavior of the \(\zeta\)-function.%
\par
Assuming RH, Littlewood~\hyperlink{T}{[{\xreffont 110}]} showed that \(S(T) = O(\log T/\log\log T)\).  It is conjectured \hyperlink{FGH}{[{\xreffont 49}]} that%
\begin{equation}
\abs{S(T)} \le (1+o(1)) \frac{1}{\pi} \sqrt{\frac12 \log{T}\log\log T}\text{,}\label{FGHconjectureS}
\end{equation}
and that bound is sharp. The results for \(Z(t)\) are analogous: on RH we have \hyperlink{T}{[{\xreffont 110}]} \(\log|Z(t)| = O(\log t/ \log\log t)\), and the conjecture (with sharp constant) is%
\begin{equation}
\log \abs{Z(T)} \le (1+o(1)) \sqrt{\frac12 \log{T}\log\log T}\text{.}\label{FGHconjecture}
\end{equation}
In the other direction \hyperlink{zetaomega}{[{\xreffont 22}]}, the current best result is that there exists \(C \gt 0\) such that there exist arbitrarily large \(T\) with:%
\begin{equation}
\log \abs{Z(T)} \gt C \, \sqrt\frac{\log T \log\log\log T}{\log \log T} \text{.}\label{eqn_Omega}
\end{equation}
\par
The typical size of \(Z(t)\) is much smaller. Selberg~\hyperlink{SelS}{[{\xreffont 103}]} proved that if \(t\) is chosen uniformly at random from \([T, 2T]\), then%
\begin{equation}
\frac{\log \zeta(\frac12 + i t)}{\sqrt{\frac12 \log\log T}}
\to N(0,1)
\ \ \ \ \
\text{ as }
\ \ \ \ \ \
T\to \infty\text{,}\label{selberg_gaussian}
\end{equation}
where \(N(0,1)\) is the standard (complex) Gaussian, and \hyperref[selberg_gaussian]{({\xreffont\ref{selberg_gaussian}})} indicates convergence in distribution. We will refer to this as ``Selberg's CLT''. In particular, \(\log\abs{Z(t)}\) and \(S(t)\) each have a (real) Gaussian distribution, which are different only because the definition of \(S(t)\) contains a factor of \(1/\pi\), and those distributions are independent.%
\par
In numerical computations of the \(\zeta\)-function the scale factor in Selberg's CLT is practically irrelevant:  at \(T_{BH}=10^{33}\), which is approximately the largest height where the \(\zeta\)-function has been calculated~\hyperlink{BobHia}{[{\xreffont 21}]}, Selberg's CLT, assuming it is valid at that low height, says that the typical size of \(\log\abs{Z(t)}\) near \(T_{BH}\) is \(\sqrt{\frac12\log\log T_{BH}} \approx 1.5\). The conjectured extreme values at that height are larger, \(\sqrt{\frac12 \log T_{BH} \log\log T_{BH}} \approx 12.8\), so within the realm of current computation one might expect to see \(\abs{Z(t)}\) larger than \(300,000\). Unfortunately, the extreme values are rare, and possibly (due to lower order terms) the predicted largest values do not occur until significantly greater heights.  The largest value of \(Z(t)\) found in \hyperlink{BobHia}{[{\xreffont 21}]} is approximately \(16244\) near \(3.92\times 10^{31}\), which is 3.7\% less than the largest ever computed value~\hyperlink{Tih2}{[{\xreffont 109}]}.  The largest calculated value of \(S(t)\) is \(3.345\), near \(t = 7.7573 \times 10^{27}\).%
\par
\hyperref[fig_boberhiary]{Figure~{\xreffont\ref{fig_boberhiary}}} shows \(Z(t)\), \(\log \abs{Z(t)}\), and \(S(t)\) in a neighborhood of the largest value of \(Z(t)\) found in~\hyperlink{BobHia}{[{\xreffont 21}]}. (Note that there is no good linear scale on which to plot \(Z(t)\) in that region.) The author thanks Jonathan Bober and Ghaith Hiary for providing open access to their extensive data~\hyperlink{Hia}{[{\xreffont 66}]}.%
\begin{figureptx}{Figure}{Plots of \(Z(T_{big}+t)\), \(\log \abs{Z(T_{big}+t)}\), and \(S(T_{big}+t)\) where \(T_{big} = 39246764589894309155251169284084 \approx 3.9 \times 10^{31}\).}{fig_boberhiary}{}%
\begin{image}{0}{1}{0}{}%
\includegraphics[width=\linewidth]{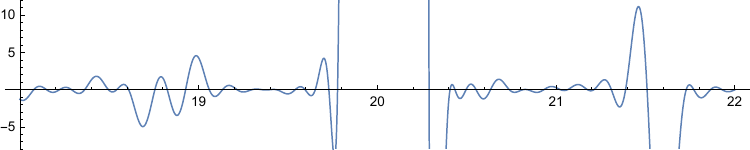}
\end{image}%
\begin{image}{0}{1}{0}{}%
\includegraphics[width=\linewidth]{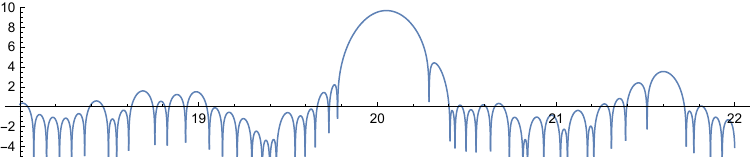}
\end{image}%
\begin{image}{0}{1}{0}{}%
\includegraphics[width=\linewidth]{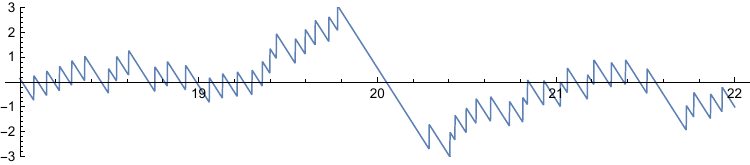}
\end{image}%
\tcblower
\end{figureptx}%
In \hyperref[fig_boberhiary]{Figure~{\xreffont\ref{fig_boberhiary}}} we see that this particular large value of \(Z(t)\) arises from a large gap between neighboring zeros.  The large zero gap also contributes to the large value of \(S(t)\): a zero gap of \(K\) times the local average spacing must be accompanied by a value \(\abs{S(t)} \ge K/2\).  The large zero gap in \hyperref[fig_boberhiary]{Figure~{\xreffont\ref{fig_boberhiary}}} is \(5.93\) times the local average. The relationship between the size of an isolated large zero gap and the local maximum of \(Z(t)\) is subtle: see \hyperref[prin_large_Z_gaps]{Principle~{\xreffont\ref{prin_large_Z_gaps}}}. The relationship between a large gap and \(S(t)\) is almost trivial, but we record it for later use. We write \(S^+(t)\) and \(S^-(t)\) for the right and left limiting values, respectively.%
\begin{principle}{Principle}{}{}{prin_large_S_gaps}%
If \(\tilde{\gamma}_{j+1} - \tilde{\gamma}_{j} = K\), then \(S^+({\gamma}_{j+1}) - S^-({\gamma}_{j}) = K\). So in particular either \(\abs{S^+({\gamma}_{j+1})} \ge K/2\) or \(\abs{S^-({\gamma}_{j})} \ge K/2\). Thus, an upper bound on \(\abs{S(t)}\) implies a comparable upper bound on the size of the normalized zero gaps.%
\end{principle}
Note that \hyperref[prin_large_S_gaps]{Principle~{\xreffont\ref{prin_large_S_gaps}}} does not say that a large value of \(S(t)\) must be accompanied by a large gap between zeros.%
\par
As will be explored in detail, \hyperref[fig_boberhiary]{Figure~{\xreffont\ref{fig_boberhiary}}} does not illustrate the typical behavior for the large values of \(Z(t)\) and \(S(t)\).  One way to see this is from Selberg's CLT~\hyperlink{SelS}{[{\xreffont 103}]} that \(\log\abs{Z(t)}\) and \(S(t)\) are independently distributed. One of those being typically large should have no effect on the other.  \hyperref[fig_boberhiary]{Figure~{\xreffont\ref{fig_boberhiary}}} illustrates that a large zero gap causes large values of \(Z(t)\) and \(S(t)\) to occur in close proximity, therefore that cannot be the typical behavior near a large value. (This discussion does not address the question of independence in the tails of the distributions of \(\log\abs{Z(t)}\) and \(S(t)\).)%
\par
In \hyperref[misleading]{Section~{\xreffont\ref{misleading}}} we briefly explore the history of finding large values of \(Z(t)\) and explain why that work has inadvertently led to a mistaken impression of what the graph of \(Z(t)\) looks like in a neighborhood of its largest values. In \hyperref[waves]{Section~{\xreffont\ref{waves}}} we introduce \emph{carrier waves}, which are the actual cause of the largest values of \(Z(t)\).%
\end{subsectionptx}
\end{sectionptx}
\typeout{************************************************}
\typeout{Section 4 Misleading ideas about large values}
\typeout{************************************************}
\begin{sectionptx}{Section}{Misleading ideas about large values}{}{Misleading ideas about large values}{}{}{misleading}
\begin{introduction}{}%
Computation has been an important tool for studying the \(\zeta\)-function ever since Riemann calculated the first few zeros by hand.  Computers have enabled large-scale computations:  large on the the human scale but minuscule on an absolute scale. In reference to whether existing computations should be seen as  evidence for RH, Andrew Odlyzko~\hyperlink{OdlOld}{[{\xreffont 86}]} has sounded a cautionary note:  the true nature of the \(\zeta\)-function is unlikely to be revealed until we reach regions where \(S(t)\) is routinely larger than \(100\).  Since (by Selberg's CLT) \(S(t)\) is typically of size \(\sqrt{\frac12 \log\log t}\), such regions will be inaccessible for a long time.%
\par
Despite Odlyzko's warning, there are certain aspects of the \(\zeta\)-function which appear in computations and have influenced the direction of research, but which do not accurately portray the true nature of the \(\zeta\)-function. These misconceptions are partially based on the way the \(\zeta\)-function is computed, which we describe next.%
\end{introduction}%
\typeout{************************************************}
\typeout{Subsection 4.1 Computations of \(Z(t)\)}
\typeout{************************************************}
\begin{subsectionptx}{Subsection}{Computations of \(Z(t)\)}{}{Computations of \(Z(t)\)}{}{}{misleading-3}
The earliest large-scale computations of the \(\zeta\)-function calculated \(Z(t)\) using the \terminology{Riemann-Siegel formula}:%
\begin{equation}
Z(t) = 2\sum_{n \lt \sqrt{t/{2\pi}}}
n^{-\frac12} \cos(\theta(t) - t\log n) \ + \ \text{remainder}\label{RiemannSiegel}
\end{equation}
where%
\begin{align}
\theta(t) =\mathstrut \amp\arg\left(\pi^{-i t/2}\Gamma\biggl(\frac14 + i\frac{t}{2}\biggr)\right)\notag\\
=\mathstrut \amp \frac{t}{2}\log \biggl(\frac{t}{2\pi}\biggr)  - \frac{t}{2}  -\frac{\pi}{8} + O(t^{-1})\text{.}\label{eqn_thetat}
\end{align}
The function \(\theta(t)\) arises in an alternate expression for \(Z(t)\):%
\begin{equation}
Z(t) = e^{i \theta(t)} \zeta(\tfrac12 + it)\text{.}\label{misleading-3-2-8}
\end{equation}
\par
The \(n=1\) term in \hyperref[RiemannSiegel]{({\xreffont\ref{RiemannSiegel}})} is the largest, and if \(t\) is small then that term has a strong influence on the overall sum.  The points where \(\cos(\theta(t)) = \pm 1\) are known as \terminology{Gram points}, numbered so that \(\theta(g_m) = m \pi\). Almost equivalently, Gram points are the locations where \(\zeta(\frac12 + it)\) is real but nonzero. (The ``almost'' is because \(0\) is not considered to be a Gram point, and it has not been proven (but certainly it is true) that the \(\zeta\)-function does not vanish at any Gram point.)%
\par
Gram~\hyperlink{gram}{[{\xreffont 61}]} noted that the first several zeros of \(Z(t)\) interlace the Gram points.  In other words, when \(t\) is small the other terms are insufficient to flip the sign of the first term at a Gram point.%
\par
In the course verifying RH for the first \(75\)~million zeros, Brent~\hyperlink{brent}{[{\xreffont 27}]} found that \(Z(t)\) was unusually large, more than \(79.6\), at the \(70\,354\,406\)th Gram point. Furthermore, at that point \emph{the first \(72\) terms in the Riemann-Siegel formula~\hyperref[RiemannSiegel]{({\xreffont\ref{RiemannSiegel}})} were positive}. \hyperref[fig_brentZ]{Figure~{\xreffont\ref{fig_brentZ}}} shows a graph of \(Z(t)\) near that Gram point.%
\begin{figureptx}{Figure}{Plot of \(Z(g_{70 354 406}+t)\) for \(-10 \lt t \lt 10\), where \(g_{70 354 406}\approx 30694257.761\) is a Gram point.}{fig_brentZ}{}%
\begin{image}{0}{1}{0}{}%
\includegraphics[width=\linewidth]{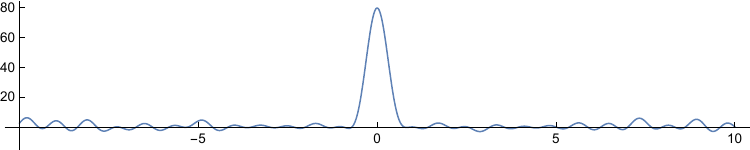}
\end{image}%
\tcblower
\end{figureptx}%
At a Gram point the first term in the Riemann-Siegel formula is \(1\) or \(-1\), and the other terms have no particular bias, so it is not surprising that~\hyperlink{T}{[{\xreffont 110}]}%
\begin{equation}
\sum_{n \le N,\ n\ \text{even}} Z(g_n) \sim N
\ \ \ \ \ \ \ \ \ \ 
\text{and}
\ \ \ \ \ \ \ \ \ \ 
\sum_{n \le N,\ n\ \text{odd}} Z(g_n) \sim -N\text{.}\label{gramaverage}
\end{equation}
Those averages are similar to the fact that the average value of \(\zeta(\tfrac12 + it)\) is~\(1\), see \hyperref[zeta1average]{({\xreffont\ref{zeta1average}})}. Such biases, combined with Selberg's CLT, are major contributors to:%
\begin{principle}{Principle}{}{}{bigOof1prin}%
For questions about the size of the \(\zeta\)-function, no numerical computation can give reliable evidence because the true nature of the \(\zeta\)-function reveals itself on the scale of \(\sqrt{\log\log T}\), which within the realm of computation is indistinguishable from a bias of order \(O(1)\).%
\end{principle}
\hyperref[bigOof1prin]{Principle~{\xreffont\ref{bigOof1prin}}}, and other sources for misinterpreting the data, are explored throughout this paper.%
\end{subsectionptx}
\typeout{************************************************}
\typeout{Subsection 4.2 Three unfounded inferences}
\typeout{************************************************}
\begin{subsectionptx}{Subsection}{Three unfounded inferences}{}{Three unfounded inferences}{}{}{misleading-4}
Brent's observations have re-appeared in subsequent numerical computations.  The result is that reinforcing those observations has been (at least partially) a goal of recent computational work, particularly as it relates to understanding the largest values of the \(\zeta\)-function. That is unfortunate because the largest values as they appear in computations are not representative of the largest values at greater height.  Thus, the impressions one has from those data are not helping to build intuition for the true nature of the \(\zeta\)-function. Quite the opposite: those numerical examples tempt one into mistaken notions.%
\par
The notions we refer to are:%
\begin{axiom}{Mistaken Notion}{}{}{mistakegap}%
The largest values of the \(\zeta\)-function occur when there is a particularly large gap between zeros.%
\end{axiom}
\begin{axiom}{Mistaken Notion}{}{}{mistakeRS}%
The largest values of the \(\zeta\)-function occur when a large number of initial terms in the Riemann-Siegel formula \hyperref[RiemannSiegel]{({\xreffont\ref{RiemannSiegel}})} have the same sign.%
\end{axiom}
\begin{axiom}{Mistaken Notion}{}{}{mistakeRH}%
Counterexamples to RH are more likely to occur near an unusually large gap between zeros.%
\end{axiom}
These Notions are part of the folklore of the \(\zeta\)-function and the author has observed conversations on these Notions at many conferences, often accompanied by refutations. Thus, it seemed prudent to address these issues explicitly.%
\par
We do not attribute those Notions as conjectures due to any specific person. Indeed, in many cases where those Notions appear in the literature (implicitly or explicitly), the writer acknowledges the Notion without necessarily endorsing it.  For example, concerning \hyperref[mistakeRS]{Mistaken Notion~{\xreffont\ref{mistakeRS}}}, Brent~\hyperlink{brent}{[{\xreffont 27}]} says ``This suggests that `interesting' regions might be predicted by finding values of \(t\) such that the first few terms in the Riemann-Siegel sum reinforce each other.'' The ``interesting'' behavior of an unusually large gap does typically occur when the initial terms reinforce, at least in the range accessible by computers. That Notion is not Mistaken in the limited context of computing the \(\zeta\)-function at moderate height.%
\par
The even more ``interesting'' behavior of a possible counterexample to RH requires a second step to get to \hyperref[mistakeRH]{Mistaken Notion~{\xreffont\ref{mistakeRH}}}. Odlyzko~\hyperlink{Odl}{[{\xreffont 88}]} explains it this way: ``One reason for the interest in large values of \(\zeta (1/2 + it)\) is that one could think of a large peak as `pushing aside' the zeros that would normally lie in that area, and if these zeros were pushed off of the critical line, one would find a counterexample to the RH.'' Odlyzko is not endorsing \hyperref[mistakeRH]{Mistaken Notion~{\xreffont\ref{mistakeRH}}}, merely noting the reasoning which might have led to its formulation.%
\par
Brent's and Odlyzko's use of ``scare quotes'' is a further indication that they are not endorsing those Notions as fundamental principles. Indeed, it may be that individually the vast majority of experts understand the limitations of what can be learned from existing computations. But the frequency with which those Notions have been repeated, not always with qualifiers such as ``it has been said that...'', indicates the need for clarification. In \hyperref[misleadingrevisited]{Section~{\xreffont\ref{misleadingrevisited}}} we revisit these Notions in the context of the Principles discussed in this paper, justifying the claim that the Notions are indeed ``Mistaken''.%
\end{subsectionptx}
\typeout{************************************************}
\typeout{Subsection 4.3 Gram's law}
\typeout{************************************************}
\begin{subsectionptx}{Subsection}{Gram's law}{}{Gram's law}{}{}{misleading-5}
If \((g_n)\) are the Gram points for the \(Z\)-function, then we say \terminology{Gram's law holds} for the \terminology{Gram interval \((g_m, g_{m+1})\)} if that interval contains exactly one zero of the \(Z\)-function. Gram noted that this law holds for the first \(15\) zeros, and Hutchinson~\hyperlink{hutch}{[{\xreffont 70}]} found that it holds for the first \(126\) zeros, and he also coined the term ``Gram's law''. Subsequent work found that it holds for more than 91\% of the first \(1000\) zeros, and most papers on zero calculations report statistics on Gram's law.  Such numerical evidence, and repeated attention, leads to the archetypal mistaken notion concerning zeros:%
\begin{axiom}{Mistaken Notion}{}{}{misleadinggramslaw}%
Gram points are special, and Gram's law is helpful for developing intuition about the location of zeros of the \(\zeta\)-function.%
\end{axiom}
It has been known for a long time that Gram's law fails infinitely often \hyperlink{T}{[{\xreffont 110}]} and in fact fails a positive proportion of the time (although there are good reasons to believe it is true more than 66\% of the time, see ~[\hyperlink{TT1}{{\xreffont 111}}, \hyperlink{HH}{{\xreffont 62}}]). Gram's law is not mistaken merely because it is not always true: it is mistaken because it paints a picture of the zeros which inhibits one from gaining a proper intuition about the behavior of the \(\zeta\)-function at large height.  Also, as we will explain in \hyperref[gramslawrevisited]{Subsection~{\xreffont\ref{gramslawrevisited}}}, if Gram's law is stated properly, it is true 0\% of the time.%
\end{subsectionptx}
\end{sectionptx}
\typeout{************************************************}
\typeout{Section 5 Separating out the local zero spacing}
\typeout{************************************************}
\begin{sectionptx}{Section}{Separating out the local zero spacing}{}{Separating out the local zero spacing}{}{}{waves}
\begin{introduction}{}%
\hyperref[fig_z5000]{Figure~{\xreffont\ref{fig_z5000}}} and \hyperref[fig_boberhiary]{Figure~{\xreffont\ref{fig_boberhiary}}} show that the local spacing of the zeros has an influence on the size of the \(\zeta\)-function: when there is a large gap the function is larger, and when there is a small gap the function stays small. Unfortunately, those observations require one to \emph{ignore the vertical scale}.  The specific spacing of nearby zeros strongly influences the \emph{relative} sizes of the \emph{nearby} maxima and minima.  But the local spacings have very little to do with whether or not the actual function values are particularly large or small, compared to what one would expect for the \(\zeta\)-function in that region.%
\par
Indeed, by the end of this section we will see that the local spacing of zeros is not the leading order contribution to the size of the \(\zeta\)-function.%
\par
In general, the size of the  \(\zeta\)-function is controlled by the \(\zeta\)-function's \terminology{carrier wave}.  The terminology is due to Hejhal~\hyperlink{Hej}{[{\xreffont 65}]}, with detailed discussion by Bombieri and Hejhal~\hyperlink{BomHej}{[{\xreffont 24}]}.  The idea of carrier waves is based on some unpublished speculations of H.L.~Montgomery. The main idea is that, on a logarithmic scale, the \(\zeta\)-function changes its size slowly.  If it is large, then outside a set with small measure, it usually stays large for a while. (The ``set with small measure'' is small neighborhoods of the zeros.) If it is small, it usually stays small for a while.  Here ``a while'' means ``on an interval containing many zeros''. In particular, the typical large values occur as clusters of large local maxima and minima, not as a single isolated large maximum. That is contrary to what we see in graphs of \(Z(t)\), whence \hyperref[mistakegap]{Mistaken Notion~{\xreffont\ref{mistakegap}}}. That is why for many people, this idea is in the ``I find that hard to believe'' category. Indeed, the phenomenon of carrier waves is not visible in any numeric computation of the \(\zeta\)-function, because the scale of the carrier waves, just like the scale of \(S(t)\), grows so slowly that it appears bounded within the range we can compute.  However, we can take a first-principles approach to defining what we mean by ``carrier wave'', and  then build intuition by looking at illustrative examples.%
\end{introduction}%
\typeout{************************************************}
\typeout{Subsection 5.1 The wave as a local constant factor}
\typeout{************************************************}
\begin{subsectionptx}{Subsection}{The wave as a local constant factor}{}{The wave as a local constant factor}{}{}{localconstant}
Suppose \(f(t)\) is a high degree polynomial, with zeros \(\gamma_1 \lt \ldots \lt \gamma_M\), and suppose we want to understand the graph of \(f\) near \(t_0\).  Further suppose that \(t_0\) is roughly near the middle of the~\(\gamma_j\).  We can write%
\begin{align}
f(t) =\mathstrut \amp a_0 \prod_{\gamma_j \text{ near } t_0} \left(1 - \frac{t}{\gamma_j}\right)
\prod_{\gamma_j \text{ far from } t_0} \left(1 - \frac{t}{\gamma_j}\right)\label{localconstant-2-8-1}\\
\approx\mathstrut \amp A_0 \prod_{\gamma_j \text{ near } t_0} \left(1 - \frac{t}{\gamma_j}\right)
\ \ \ \ \ \
\text{for } t \text{ very close to } t_0\text{.}\label{eqn_neart0}
\end{align}
The approximation in the second line above will be valid for \(t\) in a neighborhood of \(t_0\) if the zeros far from \(t_0\) are balanced on either side of \(t_0\), meaning that the product over ``\(\gamma_j\) far from \(t_0\)'' is approximately constant near \(t_0\).%
\par
\(L\)-functions are much like high degree polynomials, and the global spacing of their zeros is very regular (as can be seen from the small error term in the zero counting function \(N(T)\), see \hyperref[NT]{({\xreffont\ref{NT}})}). Thus we have:%
\begin{principle}{Principle}{}{}{prin_three_things}%
The behavior of \(Z(t)\) near \(t_0\) depends on three things: a global factor independent of \(t_0\), the arrangement of the zeros near \(t_0\), and a scale factor which depends on the zeros far from \(t_0\) and which does not change too quickly as a function of \(t_0\).%
\end{principle}
\hyperref[fig_sinezeta5000]{Figure~{\xreffont\ref{fig_sinezeta5000}}} illustrates some of the ideas in \hyperref[prin_three_things]{Principle~{\xreffont\ref{prin_three_things}}}.%
\par
\hyperref[prin_three_things]{Principle~{\xreffont\ref{prin_three_things}}} does not specify which of the two factors that depend on \(t_0\) are most relevant to the size of \(Z(t)\). Any graph of \(Z(t)\) in the range accessible by current computers, such as in \hyperref[fig_boberhiary]{Figure~{\xreffont\ref{fig_boberhiary}}}, makes it appear that the arrangement of the zeros is more important. We will see that those examples are misleading.%
\end{subsectionptx}
\typeout{************************************************}
\typeout{Subsection 5.2 Carrier waves}
\typeout{************************************************}
\begin{subsectionptx}{Subsection}{Carrier waves}{}{Carrier waves}{}{}{carrierwaves}
The scale factor \(A_0 = A_0(t_0)\) has been termed the \terminology{carrier wave} by Hejhal \hyperlink{Hej}{[{\xreffont 65}]} and Bombieri and Hejhal \hyperlink{BomHej}{[{\xreffont 24}]}, making rigorous a speculation of Montgomery.  Montgomery's preliminary calculations suggested that the carrier wave for the Riemann zeta-function should not vary too much over a window of width \(\exp(\delta_T {\log\log T})/\log T\), for some function \(\delta_T \to 0\). Bombieri and Hejhal \hyperlink{BomHej}{[{\xreffont 24}]} show that, for most \(T\), the carrier wave does not vary significantly over a window of width \(M/\log T\), for any fixed \(M \gt 0\), as \(T\to\infty\).  That is: across a span of \(M\) consecutive zeros, for any fixed \(M\), the size of \(\log\abs{\zeta(\frac12 + i t)}\) usually varies very little.  (The proof in \hyperlink{BomHej}{[{\xreffont 24}]} might actually show that one can take \(M=\log\log(T)^\kappa\) for any \(\kappa \lt \frac14\), with (6.21) in that paper providing the limiting constraint, but those details would need to be checked.) See \hyperref[how_wide]{Subsection~{\xreffont\ref{how_wide}}} for a discussion of how wide is the carrier wave.%
\par
Bombieri and Hejhal used carrier waves as the key idea toward their proof of the following surprising theorem: if \(L_1(s)\) and \(L_2(s)\) are \(L\)-functions which individually satisfy RH and have the same functional equation, and \(\alpha\in\R\), then \(L_1(s) + \alpha L_2(s)\) has 100\% of its zeros on the critical line (there are technical conditions which we have omitted).  The proof is: the \(\log(L_j(\frac12 + i t))\) are independently and normally distributed with a large variance, so most of the time one of the \(L_j\) is much larger than the other, and furthermore (because of the carrier wave) it stays larger across an arbitrarily many zeros.  Therefore in that region the zeros of the linear combination are very close to the zeros of the larger \(L\)-function, and that accounts for most of the zeros.  The large variance is an important ingredient: given two Gaussians with variance \(\sigma^2\), the probability that they differ by less than \(\sqrt{\sigma}\) is \(O(\sigma^{-\frac12})\).  The above argument requires \(\sigma \to \infty\).%
\par
Note that Selberg's result on the normal distribution of \(L(\frac12 + it)\) is not sufficient: one needs the additional fact that the carrier wave causes the larger \(L\)-function to stay large over a significant range.%
\par
Thus we have:%
\begin{principle}{Principle}{}{}{prin_carrier}%
The carrier wave is responsible for the bulk of the value distribution of an \(L\)-function, with the variation due to the local zero spacings playing a secondary role. In particular, it is the carrier wave which obeys Selberg's CLT.%
\end{principle}
\hyperref[prin_carrier]{Principle~{\xreffont\ref{prin_carrier}}} explains why, in the range accessible by current computers, the observed value distribution of \(\log\abs{Z(t)}\) departs significantly from normal. The local zero spacing contributes a lesser (typically, bounded) amount.  But a bounded amount is significant in the range where the carrier waves are very small.%
\end{subsectionptx}
\typeout{************************************************}
\typeout{Subsection 5.3 Measuring the wave}
\typeout{************************************************}
\begin{subsectionptx}{Subsection}{Measuring the wave}{}{Measuring the wave}{}{}{waves-5}
We now describe a way to measure and observe the carrier wave. The goal is to isolate the contribution of the nearby zeros, as suggested in \hyperref[eqn_neart0]{({\xreffont\ref{eqn_neart0}})}. The idea is to think of the zeros as parameters which can change: we can slide the zeros side-to-side, and this will cause a change in the graph of \(Z(t)\). (This is easier to picture if all the zeros are real, which we will assume for the purposes of this thought experiment.) If we slide the zeros apart, making a larger zero gap, then the function will acquire a larger local maximum.  If we slide the zeros so they become more equally spaced, then the maxima of the function will be approximately the same size.  In the extreme case of moving the zeros to be equally spaced, the result will be the (scaled and shifted) cosine function.%
\par
It is not possible to achieve a \emph{globally} equal spacing for the zeros, because the local average spacing of the zeros is not constant.  Instead we focus on ``nearby'' zeros, where the average spacing is close to constant. Suppose \(t_0 \in [\gamma_{M}, \gamma_{M + 1}]\). If \(K\) is large enough and we slide the zeros \(\gamma_{M-K},\ldots, \gamma_{M + K}\) to be equally spaced, then near \(t_0\) the resulting function will look like \(a \cos(b (t + c))\) for some \(a, b, c \in \R\).%
\par
We will run the above process in reverse.  That is, start with \(a \cos(b (t + c))\), pick a region of interest in the critical strip containing zeros \(\gamma_{M-K},\ldots,\gamma_{M+K}\), and then ``move'' the cosine zeros to the \(\zeta\)-zeros. This suggests that%
\begin{equation}
Z(t) \approx A_Z\, a \cos(b (t + c)) \prod_{j=M-K}^{M+K} \frac{t-\gamma_j}{t-g_j}\text{,}\label{eqn_sineapprox}
\end{equation}
 where the \(g_j\) are zeros of \(\cos(b (t + c))\), indexed so that \(g_j\) is close to \(\gamma_j\), the parameters \(a, b, c\) depend on \(t_0\) and \(K\), and \(A_Z\) is a normalization factor. The approximation should be good near \(t_0\) if \(K\) is large enough and \(t_0\) is near the middle of the interval \([\gamma_{M-K}, \gamma_{M+K+1}]\). This approximation works because locally the \(Z\)-function is determined by the nearby zeros and a scale factor.%
\par
A similar approximation was considered by Hiary and Odlyzko (\hyperlink{HiOd}{[{\xreffont 67}]}, Section~6), who refer to it as ``HP'', because essentially it uses a partial Hadamard Product for the approximation.  They find that the quality of the approximation improves linearly with the number of zeros in the product. A difference in the approach here is to recognize that the zeros far from \(t_0\) are basically contributing a multiplicative constant near~\(t_0\).%
\par
We apply \hyperref[eqn_sineapprox]{({\xreffont\ref{eqn_sineapprox}})} to \(Z(t)\) from \hyperref[fig_z5000]{Figure~{\xreffont\ref{fig_z5000}}}, with%
\begin{align}
t_0=\mathstrut \amp 5448\notag\\
M =\mathstrut \amp  5000\notag\\
\gamma_M \approx\mathstrut \amp 5447.8619\notag\\
K =\mathstrut \amp  10\notag\\
A_Z=\mathstrut \amp 2\notag\\
a=\mathstrut \amp 1.0991\notag\\
b=\mathstrut \amp 3.3825\label{bparameter}\\
c=\mathstrut \amp  0.17008 \text{.}\notag
\end{align}
The parameter \(A_Z = 2\) is explained after \hyperref[wavenotdefined]{Note~{\xreffont\ref{wavenotdefined}}}, as is the fact that \(b\) and \(c\) are actually simple functions of \(t_0\).  The parameter \(a\) is chosen so that the approximation is exact at \(t_0\). So, once \(t_0\) is selected, the only choice is \(K\), the number of zeros to be matched on either side of~\(t_0\). The result is shown in the top plot in \hyperref[fig_sinezeta5000]{Figure~{\xreffont\ref{fig_sinezeta5000}}}, where we superimpose \(Z(t)\) and its approximation based at \(t_0 = 5447.86\) using \(20\) matched zeros.. The bottom plot shows the same idea based at \(t_0 = \gamma_{7010} \approx 7273.70\), which has scale factor \(a=0.9130\).  In both cases one sees that the behavior near \(t_0\) is determined by the nearby zeros and a local scale factor (which depends on \(t_0\)).%
\begin{figureptx}{Figure}{The function \(Z(t)\) together with a local approximation as in \hyperref[eqn_sineapprox]{({\xreffont\ref{eqn_sineapprox}})} with \(K=10\). The top graph has \(t\) near \(5447.86\) and the local scale factor is \(1.0991\).  The bottom graph has \(t\) near \(7273.70\) and the scale factor is \(0.913\).}{fig_sinezeta5000}{}%
\begin{image}{0}{1}{0}{}%
\includegraphics[width=\linewidth]{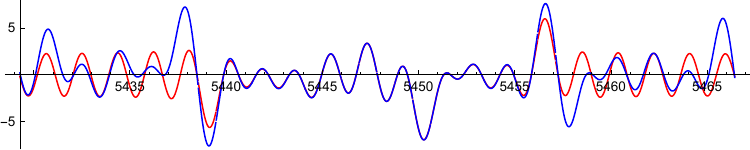}
\end{image}%
\begin{image}{0}{1}{0}{}%
\includegraphics[width=\linewidth]{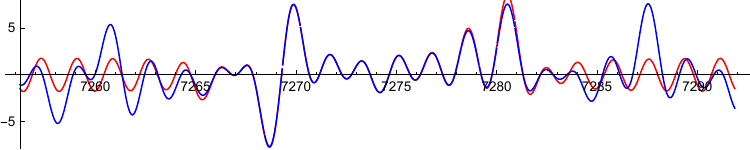}
\end{image}%
\tcblower
\end{figureptx}%
In the top plot in \hyperref[fig_sinezeta5000]{Figure~{\xreffont\ref{fig_sinezeta5000}}} we can interpret the scale factor \(a = 1.0991\) as the magnitude of the carrier wave at \(t_0 = 5447.86\), and similarly for the scale factor \(a = 0.9130\) at \(t_0 = 7273.70\). Those scale factors are different, which shows that the size of \(Z(t)\) is not solely due to the local zero spacing and a single global factor.%
\par
Here we are focusing on \hyperref[eqn_sineapprox]{({\xreffont\ref{eqn_sineapprox}})} because it explicitly separates the contribution of the local zeros from the contribution of the carrier wave. There are other ways to approximate the \(\zeta\)-function, see for example \hyperref[sec_hybrid]{Subsection~{\xreffont\ref{sec_hybrid}}}.%
\end{subsectionptx}
\typeout{************************************************}
\typeout{Subsection 5.4 Some caveats}
\typeout{************************************************}
\begin{subsectionptx}{Subsection}{Some caveats}{}{Some caveats}{}{}{waves-6}
The scale factors in \hyperref[fig_sinezeta5000]{Figure~{\xreffont\ref{fig_sinezeta5000}}} are not the carrier wave in the sense of Bombieri-Hejhal, because Montgomery's heuristic calculation suggests that at the small height of that example the carrier wave should only be approximately constant in a very narrow window. In \hyperref[fig_sinezeta5000]{Figure~{\xreffont\ref{fig_sinezeta5000}}} we have fit to the zeros in a much wider window. Nevertheless, those plots illustrate that the nearby zeros and a single local scale factor determine the local behavior of \(Z(t)\).%
\begin{note}{Note}{}{wavenotdefined}%
The scale factor at a point is not well-defined: it depends on \(K\), the number of zeros, or more precisely the width of the window, over which one has done the fit.  If the window covers an enormous number of zeros then the local scale factor will just be~\(1\), because of the long-range rigidity of the zeros; see \hyperref[sec_density]{Subsection~{\xreffont\ref{sec_density}}}. If the function changes scale in the window, i.e., the carrier wave is not approximately constant in the window, then the scale factor is not providing useful information. This is discussed further in \hyperref[densitynotunique]{Note~{\xreffont\ref{densitynotunique}}}.%
\end{note}
It remains to justify that the global scale factor (independent of \(t_0\)) is  \(A_Z = 2\). That comes from the overall factor of \(2\) in the Riemann-Siegel formula \hyperref[RiemannSiegel]{({\xreffont\ref{RiemannSiegel}})}.%
\par
The parameters \(b\) and \(c\) in \hyperref[eqn_sineapprox]{({\xreffont\ref{eqn_sineapprox}})} are also extraneous: instead of \(A_Z \cos(b (t + c))\) we could use the first term in the Riemann-Siegel formula:%
\begin{equation*}
2 \cos(\theta(t))\text{.}
\end{equation*}
Indeed \(\theta'(5447.86) \approx 3.38255\), which is the value for \(b\) in~\hyperref[bparameter]{({\xreffont\ref{bparameter}})}. Using \(2\cos(\theta(t))\) in \hyperref[eqn_sineapprox]{({\xreffont\ref{eqn_sineapprox}})} does not change the point illustrated by \hyperref[fig_sinezeta5000]{Figure~{\xreffont\ref{fig_sinezeta5000}}}.%
\par
In order to really ``see'' the carrier wave, one must go to enormously larger values of \(t\).  That is not computationally feasible in the \(L\)-function world, but it is easy in the random matrix world. In \hyperref[rmt_connections]{Section~{\xreffont\ref{rmt_connections}}} and \hyperref[RMThistory]{Section~{\xreffont\ref{RMThistory}}} we review the connections between the \(\zeta\)-function and the characteristic polynomials of random unitary matrices, returning to carrier waves in \hyperref[rmtwaves]{Section~{\xreffont\ref{rmtwaves}}}.%
\end{subsectionptx}
\typeout{************************************************}
\typeout{Subsection 5.5 The highest tone}
\typeout{************************************************}
\begin{subsectionptx}{Subsection}{The highest tone}{}{The highest tone}{}{}{sec_highest_note}
Many observations about the zeta-function can be traced to the fact that \(S(t)\), the error term in the zero counting function \(N(t)\), grows very slowly and is zero on average.  Here we collect some additional consequences.%
\begin{principle}{Principle}{}{}{zetamusic}%
The \(Z\)-function, interpreted as a sound wave made up of separate tones, has a highest frequency component which is the loudest tone and which is separated in frequency from the lower tones.  In particular, the \(Z\)-function is (approximately) a band-limited function with discrete support.%
\end{principle}
Specifically, at height \(t\) the highest frequency is \(\frac12 \log(t/2\pi)\), the factor of 1\slash{}2 coming from the fact that \(\sin\) or \(\cos\) have two zeros in each period.  Each subsequent frequency is lower by \(\frac12\log(2)\), \(\frac12\log(3)\), ..., and its contribution is smaller by a factor \(1/\sqrt{2}\), \(1/\sqrt{3}\),.... These observations come straight from the Riemann-Siegel formula \hyperref[RiemannSiegel]{({\xreffont\ref{RiemannSiegel}})}, and are illustrated in \hyperref[fig_zhatplot]{Figure~{\xreffont\ref{fig_zhatplot}}}, which shows the Fourier transform of \(Z(t)\) for \(t\approx 10^6\). Note that \(\log(10^6/2\pi) \approx 11.978\).%
\begin{figureptx}{Figure}{The Fourier transform of \(Z(t)\), computed numerically, for \(10^6 \le t \le 10^6 + 5000\).}{fig_zhatplot}{}%
\begin{image}{0}{1}{0}{}%
\includegraphics[width=\linewidth]{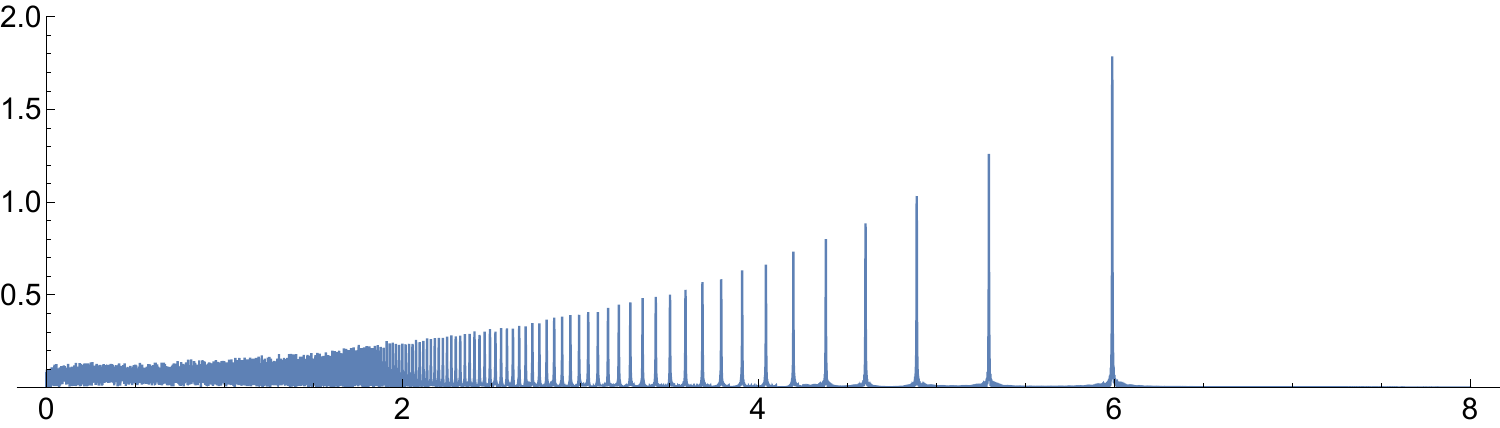}
\end{image}%
\tcblower
\end{figureptx}%
The \(Z\)-function is not actually band-limited, a fact which is relevant to superoscillations \textemdash{} the phenomenon where portions of a finite Fourier series can oscillate faster that its fastest component, see [\hyperlink{BerrySO}{{\xreffont 14}}, \hyperlink{BerryFTZ}{{\xreffont 15}}]. However, it is close enough to band-limited that band-limited interpolation has become an indespensible tool in large-scale computations of the zeros  [\hyperlink{OdlOld}{{\xreffont 86}}, \hyperlink{BobHia}{{\xreffont 21}}].%
\begin{principle}{Principle}{}{}{gammazeros}%
The highest tone of the \(Z\)-function comes from the \(\Gamma\)-factor in the functional equation, and the lower tones come from the terms in the Dirichlet series. For general \(L\)-functions, the \(\Gamma\)-factor and the sign of the functional equation place the zeros in well-spaced locations, and then the Dirichlet coefficients make adjustments.%
\end{principle}
To see that interpretation, view the Riemann-Siegel formula \hyperref[RiemannSiegel]{({\xreffont\ref{RiemannSiegel}})} as arising from the expression \(Z(t) = X(\frac12 + i t)^{-1/2} \zeta(\frac12 + it)\) for the \(Z\)-function in terms of the zeta-function, see \hyperref[eqn_Zdef]{({\xreffont\ref{eqn_Zdef}})}, combined with the \terminology{approximate functional equation}, (4.12.4) of \hyperlink{T}{[{\xreffont 110}]},%
\begin{equation}
\zeta(s) = \sum_{1\le n \le x} \frac{1}{n^s} + X(s)\sum_{1\le n \le y} \frac{1}{n^{1-s}} + \text{ correction terms},\label{sec_highest_note-8-7}
\end{equation}
where \(x y = t/2\pi\).  Setting \(x = y\), combining terms, and using Stirling's formula for the \(\Gamma\)-functions in \(X(\frac12 + it)\) yields the Riemann-Siegel formula. The \(n=1\) terms contribute \(2\, \Re X(\frac12 + it)^{-1/2}\), which is bounded by \(2\) and has regularly spaced zeros with the same counting function as the zeros of the zeta function (but with error term bounded by~\(1\)). Those zeros interlace the Gram points. We see that Gram's law is a consequence of a more general principle:%
\begin{principle}{Principle}{The first term dominates at low height.}{}{pre_grams_law}%
For \(\zeta(\frac12 + i t)\), the first term in the Riemann-Siegel formula has a strong influence on the location of the critical zeros when \(t\) is small. For general \(L\)-functions, if the analytic conductor is small then the first term in its Riemann-Siegel formula has a strong influence on the location of the critical zeros.%
\end{principle}
We used the term ``analytic conductor'' so that the principle applies more widely. For the \(\zeta\)-function at \(s=\frac12 + i t\), the analytic conductor is proportional to \(\log(2+\abs{t})\).  \hyperref[fig_first_term]{Figure~{\xreffont\ref{fig_first_term}}} illustrates the principle by graphing the \(\zeta\)-function and and the first term in its Riemann-Siegel formula, over the interval \([1000,1050]\). The Gram points are the locations of the local maxima and local minima of the red curve which oscillates between \(-2\) and~\(2\). Gram's law (see \hyperref[gramslawrevisited]{Subsection~{\xreffont\ref{gramslawrevisited}}} for further discussion) follows from the expectation that at low height, the zeros of the \(\zeta\)-function will be close to the zeros of the first term.%
\begin{figureptx}{Figure}{The functions \(Z(t)\) and \(2 \cos(\vartheta(t))\) on the interval \(1000 \le t \le 1050\).}{fig_first_term}{}%
\begin{image}{0}{1}{0}{}%
\includegraphics[width=\linewidth]{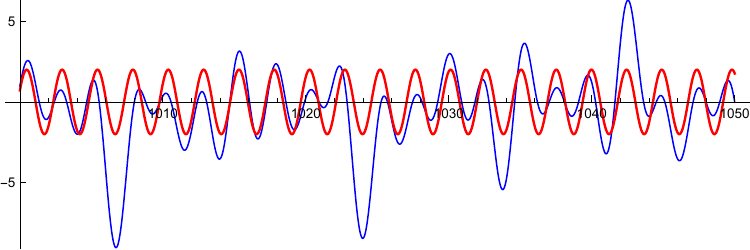}
\end{image}%
\tcblower
\end{figureptx}%
There is a Riemann-Siegel formula for any \(L\)-function of degree~\(d\), with the modification that \(x y\) is of size~\(t^d\), the numerators of the two halves of the approximate functional equation are \(a_n\) and \(\overline{a}_n\), respectively, and \(X(s)\) is replaced by \(\varepsilon X(s)\). For large \(t\) there is highest frequency term, with frequency asymptotically \(\frac{d}{2} \log t\), involving only the data in the functional equation. And just like in the classical case, initially the Dirichlet coefficients make small adjustments to the location of the zeros arising from the \(\Gamma\)-factors. The \(\Gamma\)-factors have a surprising influence on the initial zeros, particularly for nonarithmetic \(L\)-functions, whose trivial zeros have nonzero imaginary part. That influence can be captured by working directly with \(X(\frac12 + i t)^{-1/2}\) instead of using the asymptotics from Stirling's formula. See \hyperlink{FKLR}{[{\xreffont 53}]}.%
\end{subsectionptx}
\end{sectionptx}
\typeout{************************************************}
\typeout{Section 6 A random model for the \(\zeta\)-function}
\typeout{************************************************}
\begin{sectionptx}{Section}{A random model for the \(\zeta\)-function}{}{A random model for the \(\zeta\)-function}{}{}{rmt_connections}
\begin{introduction}{}%
A graph of \(Z(t)\) gives the impression of randomness: the function wiggles, and there is no apparent pattern to those wiggles. Obviously \(Z(t)\) is not random, because it is a specific function which has no randomness in its definition. But we can make a random function in the following way.  Fix an interval \(\Upsilon = [t_0,t_1] \subset \mathbb R\). If \(T\in \mathbb R\) is random, then \(Z_T(t) := Z(T+t)\) is a random function on~\(\Upsilon\).%
\par
If we had another set of random functions \(\mathcal Z_T\) on \(\Upsilon\), and it was possible to prove theorems about \(\mathcal Z_T\), and if furthermore we had reason to believe that \(Z_T\) and \(\mathcal Z_T\) had similar properties, then we could turn theorems about \(\mathcal Z_T\) into conjectures about~\(Z_T\).  That would be illuminating, particularly if precise conjectures about \(Z_T\) were in short supply. That is how random matrix theory made fundamental contributions to the study of \(L\)-functions.%
\end{introduction}%
\typeout{************************************************}
\typeout{Subsection 6.1 Self-reciprocal polynomials and the functional equation}
\typeout{************************************************}
\begin{subsectionptx}{Subsection}{Self-reciprocal polynomials and the functional equation}{}{Self-reciprocal polynomials and the functional equation}{}{}{rmt_connections-3}
Before getting into the details, let's consider a certain class of polynomials which have constant term \(1\):%
\begin{equation}
f(z) = 1 + a_1 z + a_2 z^2 + \cdots + a_{N-1} z^{N-1} +  a_N z^N .\label{srpoly}
\end{equation}
The polynomial \(f\) is \terminology{self-reciprocal} if%
\begin{equation}
a_j = a_N \overline{a_{N-j}}
\ \ \ \ \
\text{for}
\ \ \ \ \
0\le j \le N\text{,}\label{self_recip_coeffs}
\end{equation}
or equivalently%
\begin{equation}
f(z) = \mathcal{X}(z) \overline{f}(z^{-1})
\ \ \ \ \ \text{where} \ \ \ \ \ \mathcal{X}(z) = a_N z^N\text{,}\label{eqn_RMTfe}
\end{equation}
or equivalently%
\begin{equation}
\mathcal Z_f(\theta) = \mathcal X(e^{i\theta})^{-\frac12} f(e^{i\theta})
\ \ \ \ \text{is real if}\ \ \ \ \theta\in \R\text{.}\label{eqn_srZ}
\end{equation}
An important consequence of any of these conditions is that%
\begin{equation}
f(r e^{i\theta})=0  \ \ \ \ \
\text{if and only if}
\ \ \ \ \
f(r^{-1} e^{i\theta})=0\text{.}\label{self_recip_zeros}
\end{equation}
In \hyperref[eqn_RMTfe]{({\xreffont\ref{eqn_RMTfe}})}, \(\overline{f}\) is the \terminology{Schwarz reflection} of~\(f\): \(\overline{f}(z) := \overline{f(\overline{z})}\). The parallel with the functional equation for the \(\zeta\)-function is evident if one compares \hyperref[eqn_RMTfe]{({\xreffont\ref{eqn_RMTfe}})} to \hyperref[zeta_fe]{({\xreffont\ref{zeta_fe}})}, and \hyperref[eqn_srZ]{({\xreffont\ref{eqn_srZ}})} to \hyperref[eqn_Zdef]{({\xreffont\ref{eqn_Zdef}})}. At the risk of belaboring the point, in both cases there is an involution of the plane, the set of zeros is fixed by the involution, the \terminology{Riemann Hypothesis} is the assertion that every zero is fixed by the involution, and there is a Z-function which is real on the set of points which are fixed by the involution. The unit circle is the analogue of the critical line for self-reciprocal polynomials. Thus, if a collection of polynomials is claimed to be a random model for the \(\zeta\)-function, those polynomials must be self-reciprocal.%
\end{subsectionptx}
\typeout{************************************************}
\typeout{Subsection 6.2 Random polynomials and trigonometric polynomials}
\typeout{************************************************}
\begin{subsectionptx}{Subsection}{Random polynomials and trigonometric polynomials}{}{Random polynomials and trigonometric polynomials}{}{}{rmt_connections-4}
We seek random self-reciprocal polynomials which can serve as a model for \(\zeta(s)\) or equivalently~\(Z(t)\).  How to choose the randomness? There are three reasonable options.%
\par
The first option is to choose the coefficients of \(f(z)\) so that they satisfy \hyperref[self_recip_coeffs]{({\xreffont\ref{self_recip_coeffs}})}.  The second is to recognize that \(Z_f\) is a trigonometric polynomial (i.e.\@ a finite Fourier series), so its Fourier coefficients can be chosen randomly with only the requirement that they are real.  The third option is to choose the zeros so they satisfy \hyperref[self_recip_zeros]{({\xreffont\ref{self_recip_zeros}})}. We will argue that only the third option is reasonable.%
\par
\hyperref[fig_BH68]{Figure~{\xreffont\ref{fig_BH68}}} shows \(Z(t)\) at height \(1.5 \times 10^{30}\), in a region where nothing special is happening.  (Data courtesy of Bober and Hiary~\hyperlink{BobHia}{[{\xreffont 21}]}\hyperlink{Hia}{[{\xreffont 66}]}.)%
\begin{figureptx}{Figure}{\(Z(t)\) on an interval of width \(2 \pi\) near \(1.5 \times 10^{30}\)}{fig_BH68}{}%
\begin{image}{0}{1}{0}{}%
\includegraphics[width=\linewidth]{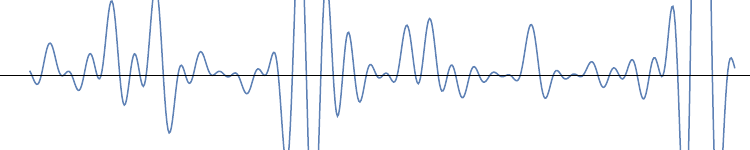}
\end{image}%
\tcblower
\end{figureptx}%
We wish create random polynomials which capture the type of randomness visible in~\hyperref[fig_BH68]{Figure~{\xreffont\ref{fig_BH68}}}. But what does that mean? The following features are worthy of attention.%
\begin{enumerate}[label={(\Alph*)}]
\item\hypertarget{RH68}{}Do all the zeros appear to be real?%
\item{}How often does the graph get big? Almost equivalently, how often are there large gaps between zeros?%
\item{}How often are pairs of zeros close together?  How often are there clusters of close zeros?%
\item\hypertarget{generallynice}{}Are the maxima and minima generally around the same size, with only occasional much larger or smaller local extrema?%
\end{enumerate}
\hyperlink{generallynice}{Item~{\xreffont D}}  is obviously a leading question. The answer is ``yes''.%
\par
\hyperlink{RH68}{Item~{\xreffont A}} refers to~RH.  We can't escape the fact that RH is true within the realm where we can experiment, so the random polynomials we seek must satisfy RH.  That is, we seek \terminology{unitary polynomials}, which is standard terminology for polynomials with all zeros on \(\abs{z}=1\).%
\par
The requirement of generating unitary polynomials effectively rules out the first two options for choosing random polynomials. If the coefficients are chosen randomly, then one can choose small coefficients to obtain a perturbation of \(z^N - 1\).  The zeros will be on the unit circle, but they will be close to regularly spaced and the function will not have randomness similar to~\hyperref[fig_BH68]{Figure~{\xreffont\ref{fig_BH68}}}.  If the coefficients are random and large, then the polynomial will usually not be unitary (although it may have a large proportion of its zeros on the unit circle). Choosing the coefficients randomly and then conditioning on having all zeros on the unit circle turns out to produce the wrong type of randomness~\hyperlink{FMS}{[{\xreffont 50}]}.  (It gives the \(\COE\), not the \(\CUE\). See \hyperref[CbetaE]{Subsection~{\xreffont\ref{CbetaE}}} for terminology.)%
\par
By process of elimination we conclude that the polynomials we seek must arise by having their zeros randomly generated, and on the unit circle.  Such polynomials have arisen in mathematical physics, which we describe next.%
\begin{note}{Note}{}{rmt_connections-4-10}%
It would be interesting to define a natural-looking ensemble of random self-reciprocal polynomials which are unitary most of time, but not always. If such an ensemble was consistent with RH being true within the realm of current computation, and it predicted a height at which RH would start to fail, that could throw doubt on~RH. Indeed, one of the great shortcomings of RH skepticism is the lack of conjectures for: the height at which RH fails, how often zeros would be off the line, and how their real parts would be distributed. \hyperref[snapshots]{Principle~{\xreffont\ref{snapshots}}} implies that even if RH fails, 100\% of the zeros are on the critical line.%
\end{note}
\end{subsectionptx}
\typeout{************************************************}
\typeout{Subsection 6.3 The circular \(\beta\)-ensembles}
\typeout{************************************************}
\begin{subsectionptx}{Subsection}{The circular \(\beta\)-ensembles}{}{The circular \(\beta\)-ensembles}{}{}{CbetaE}
The \terminology{circular \(\beta\)-ensemble} consists of \(N\) random points \(e^{i\theta_1},\ldots,e^{i\theta_N}\) on the unit circle,  equipped with the joint probability density function%
\begin{equation}
\frac{\Gamma(1 + \beta/2)^N}{\Gamma(1 + N\beta/2) (2\pi)^N} 
\prod_{1\le j \lt k \le N} |e^{i\theta_j} - e^{i\theta_k}|^\beta \text{.}\label{betameasure}
\end{equation}
This probability space is denoted~\(\CbetaE(N)\). We use \(\langle f(\theta_1,\dots,\theta_N) \rangle\) for the \terminology{expected value} of \(f\) averaged over \(\CbetaE(N)\).%
\par
The \(\CbetaE(N)\) is defined for any \(\beta \ge 0\) and any positive integer \(N\). We can obtain a random unitary (hence, self-reciprocal) polynomial by taking the points to be the zeros of a polynomial with constant term~\(1\), as in \hyperref[srpoly]{({\xreffont\ref{srpoly}})}.%
\par
If we write the \(\theta_j\) in increasing order, then it is clear that \(\langle\theta_{j+1} - \theta_j\rangle = 2\pi/N\). Almost equally clear is that the PDF of \(\theta_{j+1} - \theta_j\) vanishes to order \(\beta\) at~\(0\). For that reason \(\beta\) is referred to as the \terminology{order of repulsion} between the points.%
\par
As \(\beta\to 0\) the repulsion disappears and the points become independently uniformly distributed on the circle.  As \(\beta\to\infty\) the points approach equal spacing (known as the \terminology{picket fence} distribution), with the only remaining randomness being the angle of an initial point which determines the location of all the others. Thus, as \(\beta\to 0\) the particles behave like a gas, and as \(\beta\to\infty\) they crystallize.  For this reason, \(\beta\) is sometimes referred to as \terminology{inverse temperature}.%
\par
Between those temperature extremes there are three values which have been studied extensively in the mathematical physics literature: \(\beta=1\), \(2\), and~\(4\). Those are more commonly known as the \(\COE\), \(\CUE\), and \(\CSE\), where the O\slash{}U\slash{}S stand respectively for Orthogonal\slash{}Unitary\slash{}Symplectic. Examples of random polynomials~\(\mathcal Z(\theta)\) from each of those ensembles are shown in \hyperref[fig_beta124]{Figure~{\xreffont\ref{fig_beta124}}}. Note that the graphs all have the same vertical scale, ranging from \(-6\) to~\(6\). The graphs show the effect of \(\beta\) on the frequency of small zero gaps, large zero gaps, and large values. Those examples have \(N=68\), that choice ensuring the polynomials have the same number of zeros in a span of \(2\pi\) as the \(Z\)-function shown in \hyperref[fig_BH68]{Figure~{\xreffont\ref{fig_BH68}}}. Our next goal is to persuade that one of those ensembles provides a good model for the \(\zeta\)-function.%
\begin{figureptx}{Figure}{Example random polynomials from \(\CbetaE(68)\) with, reading top to bottom, \(\beta=1, 2\), and \(4\).}{fig_beta124}{}%
\begin{image}{0}{1}{0}{}%
\includegraphics[width=\linewidth]{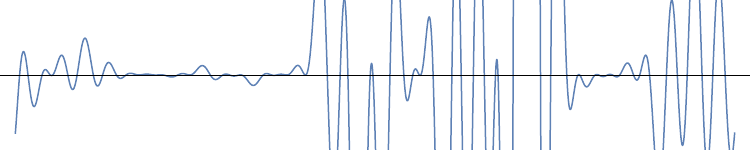}
\end{image}%
\begin{image}{0}{1}{0}{}%
\includegraphics[width=\linewidth]{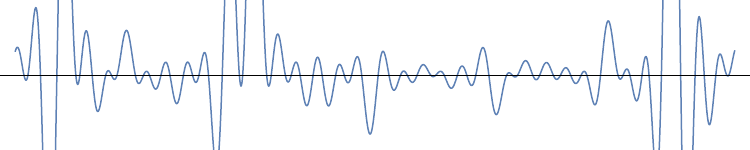}
\end{image}%
\begin{image}{0}{1}{0}{}%
\includegraphics[width=\linewidth]{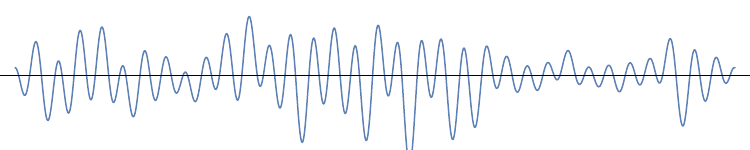}
\end{image}%
\tcblower
\end{figureptx}%
Hopefully one of the plots in \hyperref[fig_beta124]{Figure~{\xreffont\ref{fig_beta124}}} looks like it has similar randomness to the \(Z\)-function in \hyperref[fig_BH68]{Figure~{\xreffont\ref{fig_BH68}}}, because otherwise we are out of ideas. The bottom plot, \(\beta=4\), is ruled out in multiple ways. The 4th order repulsion causes the zeros to be too rigidly spaced, which in turn prevents the function from having sufficiently many large or small values.  Polynomials from the \(\CSE\) are pleasant but rather boring, lacking the pizzazz of the Riemann \(\zeta\)-function.%
\par
The top plot in \hyperref[fig_beta124]{Figure~{\xreffont\ref{fig_beta124}}} can be ruled out by carefully examining the features of the function.  The most obvious property is that the \(\COE\) example is large much more often: consider the width of the cutoffs at the large maxima. The \(\COE\) example also has more small zero gaps, both individually (small local extrema) and in clusters.%
\par
The \(\CSE\) is too rigid to serve as a model for the \(\zeta\)-function, and the \(\COE\) is too flexible. The Goldilocks zone is occupied by the \(\CUE\) which is conjectured to provide, with appropriate adjustments and caveats, a model for the zeros of the \(\zeta\)-function. We clarify the connection, and provide more terminology and historical details, in \hyperref[RMThistory]{Section~{\xreffont\ref{RMThistory}}}.%
\begin{warning}{Warning}{}{CbetaE-11}%
The \(\CUE\) has the same distribution as the eigenvalues of Haar-random matrices from the compact unitary group~\(U(N)\). But the \(\COE\) and \(\CSE\), and the \(\GOE\), \(\GUE\), and \(\GSE\) described in \hyperref[GUEera]{Subsection~{\xreffont\ref{GUEera}}}, do not correspond to Haar measure on classical matrix groups with similar names. The eigenvalues of Haar-random matrices from the unitary symplectic and unitary orthogonal groups all have quadratic repulsion for the bulk of their eigenvalues, with different behavior for the eigenvalues close to~\(1\).  These play a role in modeling the low-lying zeros of certain families of \(L\)-functions,  see~[\hyperlink{KS2}{{\xreffont 77}}, \hyperlink{Rub1}{{\xreffont 96}}].%
\end{warning}
\end{subsectionptx}
\typeout{************************************************}
\typeout{Subsection 6.4 Where does the \(\CUE\) come from?}
\typeout{************************************************}
\begin{subsectionptx}{Subsection}{Where does the \(\CUE\) come from?}{}{Where does the \(\CUE\) come from?}{}{}{rmt_connections-6}
To do calculations involving the \(\CUE\), such as computing an expectation \(\langle f \rangle\), all one needs is the joint law of the zeros: \hyperref[betameasure]{({\xreffont\ref{betameasure}})} with \(\beta=2\). That is, it does not matter how the points are generated. But to construct an example, such as the plots in \hyperref[fig_beta124]{Figure~{\xreffont\ref{fig_beta124}}}, one needs a way to produce actual sets of points with the appropriate distribution. Here are some ways:%
\begin{enumerate}
\item{}\lititle{Brownian motion.}\par%
 Have \(N\) points start at the origin and undergo non-intersecting Brownian motion until they hit the unit circle. Those points will be distributed according to~\hyperref[betameasure]{({\xreffont\ref{betameasure}})} with \(\beta=2\). (There are Brownian motion models for the \(\CbetaE\) for any \(\beta\). See \hyperlink{Dy}{[{\xreffont 44}]}.)%
\item{}\lititle{A product of tridiagonal matrices.}\par%
 Let \(\Xi_k\) be certain independent random \(2\times 2\) matrices, depending on a parameter \(\beta\), defined in~\hyperlink{KN}{[{\xreffont 79}]}. Let%
\begin{equation*}
L=\diag(\Xi_0,\Xi_2,\dots,\Xi_{\lfloor N/2 \rfloor})
\ \ \ \ \
\text{ and }
\ \ \ \ \ 
M=\diag(\Xi_{-1},\Xi_1,\dots,\Xi_{\lfloor N/2 \rfloor - 1}).
\end{equation*}
The eigenvalues of the tridiagonal matrix \(LM\), equivalently \(ML\), will be distributed according to the \(\CbetaE(N)\).%
\item{}\lititle{The unitary group \(U(N)\).}\par%
 Let \(A\in U(N)\) be chosen randomly with respect to Haar measure. The eigenvalues of \(A\) will be distributed according to~\hyperref[betameasure]{({\xreffont\ref{betameasure}})} with \(\beta=2\).%
\end{enumerate}
The final option is the easiest to implement, but see Mezzadri's paper~\hyperlink{Mez}{[{\xreffont 83}]} which describes possible pitfalls.%
\par
To summarize:%
\begin{principle}{Principle}{}{}{notreallyaboutmatrices}%
The connection between random matrices and the local statistics of zeros of the \(\zeta\)-function is not actually about random matrices:  all that really matters is the \(\beta=2\)  repulsion \hyperref[betameasure]{({\xreffont\ref{betameasure}})} on the zeros\slash{}eigenvalues. Random matrices, either Haar-random from \(U(N)\) or products of tridiagonal matrices~\hyperlink{KN}{[{\xreffont 79}]}, just happen to be convenient ways to generate such distributions.%
\end{principle}
However, for historical reasons, and for lack of a good alternate term, it is common to refer to the \(N\) points in the \(\CbetaE(N)\) as ``eigenvalues'', even if they do not arise from a matrix.%
\par
The description we have given is incomplete and our claim of a connection between the \(\CUE\) and the \(\zeta\)-function is perhaps not persuasive, so in \hyperref[RMThistory]{Section~{\xreffont\ref{RMThistory}}} we provide more details about random matrix eigenvalues, random unitary polynomials, and the connection with \(L\)-functions. Then in \hyperref[rmtwaves]{Section~{\xreffont\ref{rmtwaves}}} we return to our main theme and explore carrier waves in random polynomials from the \(\CUE\).%
\end{subsectionptx}
\end{sectionptx}
\typeout{************************************************}
\typeout{Section 7 RMT and \(L\)-functions: terminology and history}
\typeout{************************************************}
\begin{sectionptx}{Section}{RMT and \(L\)-functions: terminology and history}{}{RMT and \(L\)-functions: terminology and history}{}{}{RMThistory}
\begin{introduction}{}%
The \(\CbetaE\) were not how Random Matrix Theory (RMT) first arose in mathematical physics. RMT came about in the 1950s as a way to understand complicated quantum mechanical systems. For example: the energy levels of a large atomic nucleus. The justification is that the Hamiltonian of the system is a large and complicated matrix with certain symmetries, and so the eigenvalues of the Hamiltonian should have similar statistical properties to a large random matrix with the same symmetries.%
\par
The connection with \(L\)-functions arose in three phases.%
\begin{enumerate}
\item{}\lititle{The \(\GUE\) era.}\par%
 The original formulation in the early 1970's concerned the limiting local statistics of eigenvalues\slash{}zeros. By \emph{limiting} we refer to the rescaled eigenvalues in the large matrix limit.  The matrices are Hermitian so the eigenvalues are real. See \hyperref[GUEera]{Subsection~{\xreffont\ref{GUEera}}} for more details.%
\item{}\lititle{The classical compact group era.}\par%
 In the late 1990's it was realized that certain compact unitary groups of matrices could be associated with families of \(L\)-functions.  The size of the matrices is a natural function of the conductor of the \(L\)-functions (so one could model at finite height), the particular matrix group is determined from properties of the \(L\)-functions, and the characteristic polynomial can be used to model the values of the \(L\)-functions. See \hyperref[CUEera]{Subsection~{\xreffont\ref{CUEera}}} for more details.%
\item{}\lititle{The recipe era.}\par%
 In the early 2000's a heuristic, inspired by RMT but not actually making use of random matrices, was developed to conjecture the full main term of averages of shifted products and ratios of \(L\)-functions. Those expressions, ratios in particular, are sufficient to produce all the correlation functions of the zeros, and so can reproduce virtually every conjecture arising from RMT. See~\hyperlink{CS}{[{\xreffont 38}]}. Furthermore, the heuristic produces the complete main term and not just the leading order behavior, and there is no need to insert an ``arithmetic factor'' in an ad-hoc way.  The heuristic goes by the name ``the recipe''.  See the end of \hyperref[recipeera]{Subsection~{\xreffont\ref{recipeera}}} for a brief mention.%
\end{enumerate}
\par
Despite the fact that the recipe has largely replaced RMT as a tool for making conjectures about \(L\)-functions, random matrices are essential for our discussion of carrier waves, because characteristic polynomials provide concrete examples which we can visualize.%
\end{introduction}%
\typeout{************************************************}
\typeout{Subsection 7.1 The \(\GUE\) era}
\typeout{************************************************}
\begin{subsectionptx}{Subsection}{The \(\GUE\) era}{}{The \(\GUE\) era}{}{}{GUEera}
The matrices in this setting are Hermitian, so the eigenvalues are real. The random entries are real, complex, or quaternionic depending on the type of physical system being modeled, and the randomness is Gaussian. As in the circular ensembles, the three Gaussian ensembles are named \(\GOE\), \(\GUE\), and \(\GSE\), where again O\slash{}U\slash{}S stands for Orthogonal\slash{}Unitary\slash{}Symplectic.  Again there is a single parameter \(\beta\) which describes the eigenvalue repulsion, with \(\beta=1\), \(2\), or \(4\) for each of those cases, respectively. And again the way the eigenvalues (or energy levels) are generated could be Brownian motion~\hyperlink{Dy}{[{\xreffont 44}]}, or tridiagonal matrices~\hyperlink{DE}{[{\xreffont 45}]}, or the original formulation of random Hermitian matrices with a given symmetry.  The only concern for our purposes is that one has \(N\) points on the real line with joint probability density function:%
\begin{equation}
\frac{\Gamma(1+\beta/2)^N}{(2\pi)^{N/2}\prod_{j=1}^N \Gamma(1+j\beta/2)}
\exp\bigl(-\tfrac{1}{2} \sum_{j=1}^N \lambda_j^2\bigr)
\prod_{1\le j \lt k \le N} \abs{\lambda_j - \lambda_k}^\beta\text{.}\label{GbetaEmeasure}
\end{equation}
\par
A similarity with the \(\CbetaE\) is the order \(\beta\) repulsion between points.  A difference is that the points are on a line, with the factor \(\exp(-\frac12\lambda_j^2)\) keeping more of the points near the origin and preventing the points from wandering too far away.  Most of the points satisfy \(\abs{\lambda_j} \lt \sqrt{2\beta N}\) and the rescaled points \(\lambda_j/ \sqrt{\beta N}\) are distributed according to the \terminology{Wigner semicircle law} \(\pi^{-1}\sqrt{2 - x^2}\).%
\par
The similarities outweigh the differences.  The \(\GbetaE\) and \(\CbetaE\) belong to the same \terminology{universality class} which is characterized only by order \(\beta\) repulsion between points. Let's be specific about which properties are universal. Define the rescaled eigenvalues \(\tilde{\lambda}\) so that \(\langle \tilde{\lambda}_{j+k} - \tilde{\lambda}_{j}\rangle = k\) for all fixed~\(k\). Consider any \terminology{local statistic}, that is, a statistic only involving a finite number of differences of rescaled eigenvalues.  Examples of local statistics are the normalized nearest neighbor spacing%
\begin{equation}
p_2(x) = \lim_{\Delta x \to 0}
\frac{1}{\Delta x}
\left\langle \frac{\# \{\tilde{\lambda}_{j+1} - \tilde{\lambda}_{j} \in [x, x + \Delta x] \}}{N}
\right\rangle\text{,}\label{nndef}
\end{equation}
the pair correlation function%
\begin{equation}
R_2(x) = \lim_{\Delta x \to 0}
\frac{1}{\Delta x}
\left\langle \frac{\#\{\tilde{\lambda}' - \tilde{\lambda} \in [x, x + \Delta x]\}}{N}
\right\rangle\text{,}\label{paircorrdef}
\end{equation}
or more generally the joint distribution of \((\tilde{\lambda}_{j_1} - \tilde{\lambda}_{j_0}, \dots, \tilde{\lambda}_{j_k} - \tilde{\lambda}_{j_0})\) for any \(j_0\) and \(j_1 \lt \dots \lt j_k\). Note that the focus is on the normalized distances between an eigenvalue and its neighbors, not on their absolute locations.%
\par
The quantities above depend on \(N\), but we suppress that notation because the leading order behavior is independent of~\(N\).%
\begin{principle}{Principle}{Universality.}{}{betauniversality}%
For each \(\beta\), all \(\beta\)-ensembles have the same limiting local eigenvalue statistics as \(N\to\infty\).%
\end{principle}
\hyperref[betauniversality]{Principle~{\xreffont\ref{betauniversality}}} comes from the fact that the \(\beta\)-repulsion term is the most important feature of the measure. Here is a less hand-wavy explanation in the case of the \(\CUE(N)\) and the \(\GUE(N)\). Both ensembles are examples of a \terminology{determinental point process}. This means there is a \terminology{kernel function} \(K_N(x,y)\) such that the \(n\)-correlation function of the eigenvalues can be expressed as the determinant of an \(n\times n\) matrix with entries involving \(K_N(x,y)\). For example, the pair correlation function is given by%
\begin{equation}
R_{2,N}(x-y) = \begin{vmatrix} 1 \amp K_N(x,y) \\
K_N(y,x) \amp 1
\end{vmatrix}\text{.}\label{parcorrdet}
\end{equation}
Thus, the kernel function determines all the correlation functions, so it also determines all the local statistics (because those can be expressed in terms of the correlation functions, possibly involving a complicated inclusion-exclusion argument).%
\par
For \(\GUE(N)\) the kernel function is%
\begin{equation}
K^{\GUE(N)}(x, y) = \sqrt{N}\frac{\psi_N(x) \psi_{N-1}(y)
-
\psi_{N-1}(x) \psi_{N}(y) }
{x-y}\text{,}\label{GUEN}
\end{equation}
where \(x, y\in \R\). Here \(\psi_n(x) = He_n(x) e^{-x^2/4}/\sqrt{\sqrt{2\pi} n!}\), where \(He_n(x)\) is a Hermite polynomial. Change variables \((x,y) \mapsto (4x/\sqrt{N},4y/\sqrt{N})\) for the normalized neighbor spacing.%
\par
For \(\CUE(N)\) the kernel function is%
\begin{equation}
K^{\CUE(N)}(\theta, \phi) = \frac{1}{N}\frac{\sin(N (\theta-\phi)/2)}{\sin( (\theta-\phi)/2)}\text{,}\label{GUEera-9-2}
\end{equation}
where \(\theta, \phi \in \R \mod 2\pi\). Change variables \((\theta,\phi) \mapsto (2\pi\theta/N, 2\pi\phi/N)\) for the normalized neighbor spacing. To leading order those kernels are the same, because by properties of Hermite polynomials, both (normalized) large \(N\) limits are \(\sin(\pi x)/(\pi x) \).%
\par
The difference between the kernel functions of the \(\GUE(N)\) and \(\CUE(N)\) is \(O(1/N)\), so the same holds for all correlation functions. \hyperref[guecuedifference]{Figure~{\xreffont\ref{guecuedifference}}} shows the pair correlation for each ensemble for \(N=50\), and their difference (with the \(\GUE(N)\) pair correlation being larger in a neighborhood of \(0\)).%
\begin{figureptx}{Figure}{Pair correlation for \(\GUE(50)\) and \(\CUE(50)\), and their difference.}{guecuedifference}{}%
\begin{image}{0.05}{0.9}{0.05}{}%
\includegraphics[width=\linewidth]{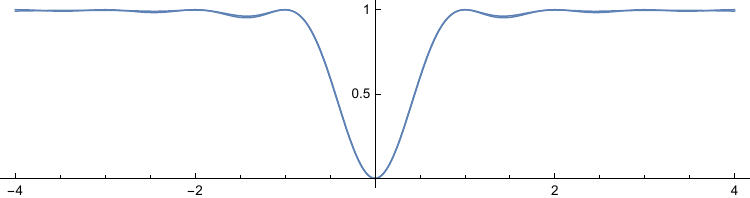}
\end{image}%
\begin{image}{0.05}{0.9}{0.05}{}%
\includegraphics[width=\linewidth]{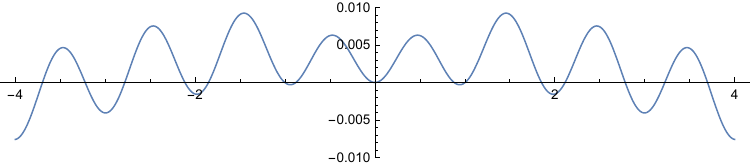}
\end{image}%
\tcblower
\end{figureptx}%
Note that by \hyperref[parcorrdet]{({\xreffont\ref{parcorrdet}})} and the limiting values of the kernel functions for the \(\GUE\) and \(\CUE\), we find that both have the limiting normalized pair correlation function:%
\begin{equation}
R_2(x) = 1 - \frac{\sin^2(\pi x)}{(\pi x)^2}\text{.}\label{cuepaircorrelation}
\end{equation}
\par
That pair correlation function was the key to discovering the connection between random matrix eigenvalues and the zeros of the \(\zeta\)-function.%
\end{subsectionptx}
\typeout{************************************************}
\typeout{Subsection 7.2 Zero statistics for \(L\)-functions}
\typeout{************************************************}
\begin{subsectionptx}{Subsection}{Zero statistics for \(L\)-functions}{}{Zero statistics for \(L\)-functions}{}{}{zero_stats}
The RMT revolution in number theory began when Montgomery~\hyperlink{Mon}{[{\xreffont 84}]} determined partial information about the pair correlation of the zeros of the \(\zeta\)-function.  The theorem he proved, combined with some heuristics about the prime numbers, led him to conjecture that asymptotically the pair correlation function was%
\begin{equation}
R_{2,\zeta}(x) = 1 - \frac{\sin^2(\pi x)}{(\pi x)^2}\text{.}\label{zetapc}
\end{equation}
Here%
\begin{equation}
R_{2,\zeta}(x) :=
\lim_{\Delta x \to 0}
\frac{1}{\Delta x}
\lim_{T\to\infty}
\frac{1}{N(T)}
\sum_{0 \lt \gamma,\gamma' \leq T} \#\{\tilde{\gamma}' - \tilde{\gamma} \in [x, x+\Delta x] \}\label{zero_stats-2-5}
\end{equation}
is the \(\zeta\)-function analogue of~\hyperref[paircorrdef]{({\xreffont\ref{paircorrdef}})}. Montgomery's conjecture became more significant when he met Freeman Dyson, who informed him that \hyperref[zetapc]{({\xreffont\ref{zetapc}})} was the limiting pair correlation of eigenvalues of the \(\GUE\), as shown in~\hyperref[cuepaircorrelation]{({\xreffont\ref{cuepaircorrelation}})}. Combined with extensive computations by Odlyzko~\hyperlink{Odl}{[{\xreffont 88}]}, this established:%
\begin{principle}{Principle}{The \(\GUE\) Hypothesis, aka the Montgomery-Odlyzko law.}{}{guehypothesis}%
The limiting local statistics of the zeros of the \(\zeta\)-function are the same as the limiting local statistics of the \(\GUE\).%
\end{principle}
The ``limiting statistics'' above refer to \(T\to\infty\) and \(N\to\infty\).%
\begin{paragraphs}{Consequences of the \(\GUE\) Hypothesis.}{zero_stats-5}%
The \(\GUE\) Hypothesis has many useful consequences:%
\begin{enumerate}
\item{}Montgomery's pair correlation conjecture~\hyperref[zetapc]{({\xreffont\ref{zetapc}})}.%
\item{}100\% of the zeros of the \(\zeta\)-function are simple.%
\item{}The PDF \(p_2(x)\), see \hyperref[nndef]{({\xreffont\ref{nndef}})}, of the nearest neighbor spacing \(\tilde{\gamma}_{j+1} - \tilde{\gamma}_j\) is given by an explicit expression, which is well-approximated  in the bulk by the \terminology{Wigner surmise}%
\begin{equation}
\tfrac{32}{\pi^2} x^2 e^{-\frac{4}{\pi} x^2}\text{,}\label{zero_stats-5-3-3-1-5}
\end{equation}
which is the exact expression for \(\GUE(2)\). The limiting behavior of \(p_2(x)\) for \(\GUE(n)\) as \(n\to\infty\) is \(\pi^2 x^2/3\) as \(x\to 0\) and \(\exp(-\pi^2 x^2/8)\) as \(x\to \infty\).%
\par
The next two items are consequences of those limiting behaviors.%
\item{}The smallest gaps between consecutive zeros satisfy%
\begin{equation}
\tilde{\gamma}_{j+1} - \tilde{\gamma}_j \sim  \gamma_j^{-\frac13 + o(1)}\text{,}\label{mingap}
\end{equation}
 that is,%
\begin{equation}
\liminf_{j\to\infty} \frac{\log(\gamma_{j+1} - \gamma_j)}{\log \gamma_j} = -\frac13\text{.}\label{zero_stats-5-3-4-2}
\end{equation}
It is possible to make a more precise statement. See \hyperref[extremegaps]{Subsection~{\xreffont\ref{extremegaps}}}.%
\item{}The largest gaps satisfy%
\begin{equation}
\tilde{\gamma}_{j+1} - \tilde{\gamma}_j \sim
c_{large} \sqrt{\log \gamma_j}\text{,}\label{maxgap}
\end{equation}
where perhaps \(c_{large} = 1/\sqrt{32}\)~\hyperlink{BA_B}{[{\xreffont 12}]}. Or perhaps \(c_{large}\) is a bit smaller.  See \hyperref[extremegaps]{Subsection~{\xreffont\ref{extremegaps}}}.%
\end{enumerate}
Information about small gaps between zeros has implications for the class number problem~\hyperlink{CI}{[{\xreffont 37}]}, which was Montgomery's original motivation for studying the pair correlation function~\hyperlink{Mon}{[{\xreffont 84}]}.%
\end{paragraphs}%
\begin{paragraphs}{Shortcomings of the \(\GUE\) Hypothesis.}{zero_stats-6}%
The predictions of the \(\GUE\) Hypothesis are spectacular, but it was clear from the beginning that only part of the story was being revealed. Odlyzko's computations showed a discrepancy with the predictions.  The discrepancy decreased at larger heights, as would be expected since the \(\GUE\) Hypothesis only refers to the limiting behavior, but clearly there was a need for a more precise model.%
\par
Another shortcoming is that the prime numbers are nowhere to be seen in the \(\GUE\) Hypothesis.  How can it be that the truth about the \(\zeta\)-function has nothing to do with the prime numbers?%
\par
Finally, and most relevant to this present work, the \(\GUE\) Hypothesis says little or nothing about the actual values of the \(\zeta\) function. This appeared in Odlyzko's calculations (which computed the values of the \(\zeta\)-function and not just the zeros), which found a large discrepancy between the computed value distribution and the limiting Gaussian distribution from Selberg's CLT. The convergence to Gaussian is slow, see \hyperref[fig_nongaussian]{Figure~{\xreffont\ref{fig_nongaussian}}},  so it was not surprising to see a discrepancy.  But the nature of the discrepancy is not explained by the \(\GUE\) Hypothesis. It took until the year 2000 to address that shortcoming.%
\end{paragraphs}%
\end{subsectionptx}
\typeout{************************************************}
\typeout{Subsection 7.3 After 30 years of \(\GUE\): classical compact groups and the Keating-Snaith Law}
\typeout{************************************************}
\begin{subsectionptx}{Subsection}{After 30 years of \(\GUE\): classical compact groups and the Keating-Snaith Law}{}{After 30 years of \(\GUE\): classical compact groups and the Keating-Snaith Law}{}{}{CUEera}
In the late 1990's two innovations increased the influence of RMT on number theory.  The first was due to Katz and Sarnak~\hyperlink{KaSa}{[{\xreffont 75}]} who found that naturally arising collections of \(L\)-functions have a \terminology{symmetry type} which is given by a classical compact matrix group.  The possible symmetry types are Unitary, Symplectic, and Orthogonal, with Orthogonal splitting into two cases depending on whether the sign of the functional equation is always \(+1\) or equally likely \(+1\) or \(-1\). The random matrices in this context are elements of the compact unitary groups \(U(N)\), \(Sp(N)\), and \(O(N)\), where the randomness is uniform with respect to Haar measure.  Note that \(U(N)\) with Haar measure is the same as the \(\CUE(N)\), but the others are completely different than their mathematical physics counterparts having similar names.  In fact, all of those classical compact groups have the same bulk eigenvalue statistics: the only difference arises with the eigenvalues near~\(1\).  Numerical evidence~\hyperlink{Rub1}{[{\xreffont 96}]}, and much subsequent work, supports the conjecture that the symmetry type of the family of \(L\)-functions determines the distribution of zeros near the critical point.%
\par
Keating and Snaith~\hyperlink{KS1}{[{\xreffont 76}]}\hyperlink{KS2}{[{\xreffont 77}]} made the second key leap when they recognized that the characteristic polynomial could be used to model the \(L\)-function itself. Here the \terminology{characteristic polynomial} of the \(N\times N\) matrix \(A\) is written in the slightly nonstandard form%
\begin{align}
\Lambda_A(z) =\mathstrut& \det( I - z A)\label{CUEera-3-8-1}\\
=\mathstrut& \prod_{1\le j \le N}(1 - z e^{i\theta_j})\text{,}\notag
\end{align}
which is in keeping with our normalization \hyperref[srpoly]{({\xreffont\ref{srpoly}})} for self-reciprocal polynomials. Here the \(e^{i \theta_j}\) are the eigenvalues of \(A\). The analogue of the \(Z\)-function is%
\begin{align}
\mathcal Z_A(\theta) =\mathstrut & ((-1)^N \det A)^{-\frac12} e^{-iN\theta/2} \det(I - e^{i\theta} A)\notag\\
=\mathstrut & ((-1)^N \det A\,  e^{i N\theta})^{-\frac12} \Lambda_A(e^{i\theta})\text{,}\label{RMTZ}
\end{align}
which is real for \(\theta \in \R\). We write \(\mathcal S_A\) for the error term in the zero counting function:%
\begin{equation*}
\#\{\theta_j \in [0, X] \} = \frac{1}{2\pi} X + C_A + \mathcal S_A(X)
\end{equation*}
where \(C_A\) is chosen so that \(\mathcal S_A\) is \(0\) on average.  The number \(C_A\) is analogous to the constant \(7/8\) in the zero counting function of the \(\zeta\)-function, see \hyperref[NT]{({\xreffont\ref{NT}})}.%
\par
We confine our discussion to the case of the \(\zeta\)-function, which constitutes a unitary family, meaning that \(\zeta(\frac12 + i t)\) is modeled by the characteristic polynomial of Haar-random matrices from \(U(N)\).%
\begin{principle}{Principle}{The Keating-Snaith Law.}{}{KSlaw}%
Model \(\zeta(\frac12 + i t)\) for \(t\approx T\) by \(\Lambda_A(e^{i\theta})\) for Haar-random \(A\in U(N)\), where \(N = \log(T/2\pi)\).%
\end{principle}
A key component of \hyperref[KSlaw]{Principle~{\xreffont\ref{KSlaw}}} is the identification \(N=\log(T/2\pi)\), where the equality means ``is an integer close to''. This is justified by ``equating the density of zeros''. When calculating asymptotics, where the only realistic hope is to make leading-order predictions, one can take \(N=\log T\).  But for numerical comparison involving small numbers, the more precise choice of \(N\) is helpful. See \hyperref[kappa1]{Subsection~{\xreffont\ref{kappa1}}} for further discussion of \(N=\log T\).%
\par
The first great success of \hyperref[KSlaw]{Principle~{\xreffont\ref{KSlaw}}} was explaining the apparent discrepancy between numerical data and Selberg's CLT. Keating and Snaith proved a central limit theorem for \(\log\Lambda_A(e^{i\theta})\), exactly analogous to Selberg's result. Odlyzko's data, taken at height \(1.52 \times 10^{19}\), does not look Gaussian, and is not even symmetric. But analogous data for \(\Lambda_A(e^{i\theta})\) for Haar-random \(A\in U(42)\) gives a close fit with the \(\zeta\)-function data. See Figure~1 in~\hyperlink{KS1}{[{\xreffont 76}]}. The ``42'' comes from \(\log(1.52 \times 10^{19}/(2 \pi))\approx 42.33\). Analogous to \hyperref[prin_carrier]{Principle~{\xreffont\ref{prin_carrier}}}, it is the carrier wave which obeys the Keating-Snaith central limit theorem.%
\begin{figureptx}{Figure}{On the left is the PDF of \(\log\abs{\Lambda_A(e^{i\theta})}\) for random \(A\in U(N)\) for \(N=25, 50, 100, 200, 400\), and \(800\), each normalized to have unit variance.  On the right is the negative logarithm of those PDFs.  In both cases the dashed line is the limiting Gaussian distribution.%
}{fig_nongaussian}{}%
\begin{sidebyside}{2}{0}{0.02}{0}%
\begin{sbspanel}{0.50}[bottom]%
\noindent\includegraphics[width=\linewidth]{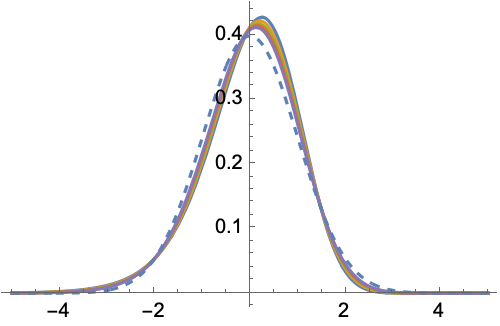}
\end{sbspanel}%
\begin{sbspanel}{0.48}[bottom]%
\noindent\includegraphics[width=\linewidth]{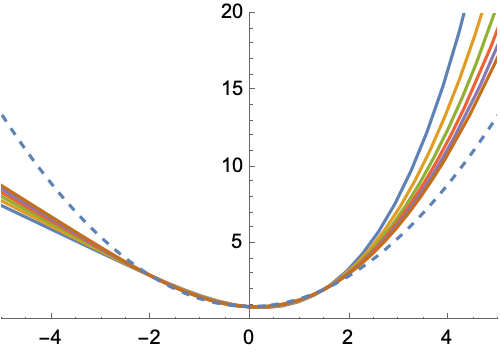}
\end{sbspanel}%
\end{sidebyside}%
\tcblower
\end{figureptx}%
The second great success came from predicting the mysterious factor \(g_k\) in the conjectured moments \hyperref[leadingtwokthmoment]{({\xreffont\ref{leadingtwokthmoment}})} of the \(\zeta\)-function. Previously it was known that \(g_1=1\) and \(g_2=2\). It had been conjectured [\hyperlink{CG2}{{\xreffont 34}}, \hyperlink{CoGo}{{\xreffont 36}}] that \(g_3=42\) and \(g_4=24024\). What Keating and Snaith found  was (they compute an exact expression, but only the asymptotic is relevant for this discussion):%
\begin{equation}
\left\langle |\Lambda_A(e^{i\theta})|^{2k}\right\rangle_{U(N)}
\sim g_{U,k} N^{k^2}\text{,}\label{CUE2k}
\end{equation}
as \(N\to\infty\), where%
\begin{equation}
g_{U,k} = k^2! \prod_{j=0}^{k-1} \frac{j!}{(j+k)!}\text{.}\label{CUEera-9-11}
\end{equation}
They found the beautiful equality \(g_{U,k} = g_k\) for the known and conjectured values for \(k=1,2,3,4\).%
\par
We make explicit some of the assumptions in \hyperref[KSlaw]{Principle~{\xreffont\ref{KSlaw}}}.%
\begin{principle}{Principle}{The Keating-Snaith Law, part 2.}{}{eNprin}%
To model \(\zeta(\frac12 + it)\) throughout the interval \([T, 2T]\), choose \(e^N\) Haar-random characteristic polynomials from \(U(N)\), where \(N\approx \log T/2\pi\).%
\end{principle}
In other words, the \(\zeta\)-function is treated as being independent on each separate interval of length~\(2\pi\).  One could argue that \(e^N/2\pi\) random matrices is a better choice, but that makes no difference in practice.  \hyperref[eNprin]{Principle~{\xreffont\ref{eNprin}}} is one of the many ways to obtain the conjectured maximum values \hyperref[FGHconjecture]{({\xreffont\ref{FGHconjecture}})} and \hyperref[FGHconjectureS]{({\xreffont\ref{FGHconjectureS}})} of \(Z(t)\) and \(S(t)\)~\hyperlink{FGH}{[{\xreffont 49}]}.%
\par
The success of the \hyperref[KSlaw]{Keating-Snaith Law} is our justification to use characteristic polynomials from \(U(N)\), equivalently \(\CUE(N)\), to explore the behavior of \(Z(t)\) beyond the realm accessible by direct computation.  We address some natural questions concerning the connection between RMT and the \(\zeta\)-function before examining carrier waves in characteristic polynomials in~\hyperref[rmtwaves]{Section~{\xreffont\ref{rmtwaves}}}.%
\begin{paragraphs}{Limitations of the Keating-Snaith law.}{CUEera-14}%
The GUE Hypothesis is a statement about limiting behavior, and the Keating-Snaith law can be interpreted as a first order correction. Here we discuss the prospects of making a more precise version of the Keating-Snaith law.%
\par
It is obvious that random matrices cannot capture the subtle behavior of the \(\zeta\)-function because the matrices do not know anything about the prime numbers. For example, when characteristic polynomials were first used to conjecture moments of the \(\zeta\)-function, see \hyperref[twokthmoment]{({\xreffont\ref{twokthmoment}})}, the arithmetic factor \(a_k\) was inserted in an ad-hoc manner.  A way to make the moment conjecture more natural is the ``hybrid model'' \hyperlink{hybrid}{[{\xreffont 59}]}, which expresses the \(\zeta\)-function as a product with two components: one involving the zeros and one involving the primes. See \hyperref[sec_hybrid]{Subsection~{\xreffont\ref{sec_hybrid}}}. Assuming a statistical independence of those two components, the leading term in conjectured moments arises as the product \(g_k a_k\).%
\par
It is natural to wonder if a more precise hybrid model could produce lower order terms in the conjectured moments? Since the zero statistics of the \(\zeta\)-function and of characteristic polynomials agree only to leading order, a more precise model would either have to produce eigenvalue statistics which incorporate arithmetic effects, or the arithmetic factor would somehow have to compensate for the incorrect lower order terms in the factor involving the zeros.  Neither option seems plausible. The difficulty is further compounded by the fact that the two pieces cannot be treated as independent \textemdash{} because by inspection the main terms in the conjectured moments do not factor as a product. We say a bit more about this in \hyperref[sec_hybrid]{Subsection~{\xreffont\ref{sec_hybrid}}}.%
\par
A more detailed hybrid model incorporating the interactions between the primes and the zeros would be interesting, but it seems unlikely that such a construction exists. And it is largely unnecessary because the recipe already provides precise information about the moments and the zeros of the \(\zeta\)-function. In particular, we know a lot about the lower order corrections to the Keating-Snaith law.  In every case we find that the lower order corrections have a dampening effect.  For example, the arithmetic factor in the conjectured moments, \(a_k\), is very small. Another example is that the spacing of zeros of the \(\zeta\)-function is more rigid than the eigenvalues of random matrices, see \hyperref[spectralrigidity]{Subsection~{\xreffont\ref{spectralrigidity}}}.  This has the effect of making extreme gaps, or dense clusters of zeros, slightly less likely.  To summarize:%
\begin{principle}{Principle}{}{}{KSoverestimate}%
Lower-order corrections to the Keating-Snaith law have a dampening effect: the statistical behavior of the \(\zeta\)-function and its zeros tend to be slightly less extreme than the analogous quantities for random unitary matrices.%
\end{principle}
\end{paragraphs}%
\end{subsectionptx}
\typeout{************************************************}
\typeout{Subsection 7.4 Modeling \(Z(t)\) vs. modeling \(\zeta(s)\)}
\typeout{************************************************}
\begin{subsectionptx}{Subsection}{Modeling \(Z(t)\) vs. modeling \(\zeta(s)\)}{}{Modeling \(Z(t)\) vs. modeling \(\zeta(s)\)}{}{}{RMThistory-6}
Our original motivation was to model \(Z(t)\), but our description of the Keating-Snaith law referred to modeling \(\zeta(s)\).  To clarify:%
\begin{principle}{Principle}{}{}{prin_Zandzeta}%
Model \(Z(t)\) for \(t\in \mathbb R\) by \(\mathcal Z_A(\theta)\). Model \(\zeta(\frac12 + a + i t)\) for \(a\in \mathbb C\) by \(\Lambda(e^{i\theta - a})\).%
\end{principle}
Some consequences of this model arise from the symmetries in the \(\CbetaE\) measure \hyperref[betameasure]{({\xreffont\ref{betameasure}})}:%
\begin{lemma}{Lemma}{}{}{cbetaesymmetries}%
The \(\CbetaE(N)\) is invariant under the operations%
\begin{gather}
\{ \theta_j \} \leftrightarrow \{ \theta_j + \theta \}\label{rotationinvariance}\\
\{ \theta_j \} \leftrightarrow \{ - \theta_j \}\text{.}\label{conjugationinvariance}
\end{gather}
\end{lemma}
Note that the \(\GbetaE\), \hyperref[GbetaEmeasure]{({\xreffont\ref{GbetaEmeasure}})}, has a symmetry analogous to \hyperref[conjugationinvariance]{({\xreffont\ref{conjugationinvariance}})}, but no translation invariance as in \hyperref[rotationinvariance]{({\xreffont\ref{rotationinvariance}})}.%
\par
By \hyperref[rotationinvariance]{({\xreffont\ref{rotationinvariance}})}, the expected value of expressions like \(\mathcal Z_A(\theta) \mathcal Z_A(\theta + \alpha)\) or \(\abs{\Lambda_A(e^{i\theta})}^k\) are independent of \(\theta\), so one typically sets \(\theta=0\). See for example \hyperref[farmerconjecture]{({\xreffont\ref{farmerconjecture}})} and \hyperref[farmeranalogue]{({\xreffont\ref{farmeranalogue}})}. Also, since \(\mathcal Z_A(\theta + 2\pi/N) = - \mathcal Z_{A'}(\theta)\) with \(A\) and \(A'\) equally likely, the distribution of \(\mathcal Z_A(\theta)\) is symmetric (but the distribution of \(\Lambda_A(\theta)\) is not: it has average value~\(1\).) Thus, \hyperref[hardyZpm]{Principle~{\xreffont\ref{hardyZpm}}} is a theorem for characteristic polynomials.%
\par
By \hyperref[conjugationinvariance]{({\xreffont\ref{conjugationinvariance}})}, any (finite) sequence of gaps between eigenvalues is as likely as that sequence in reverse. Another way to say it is that \(\mathcal Z_{A}(-\theta) = \mathcal Z_{A'}(\theta)\) with \(A\) and \(A'\) equally likely. This explains some conjectures from~\hyperlink{shanker}{[{\xreffont 105}]}, see \hyperref[grampointsrevisited]{Subsection~{\xreffont\ref{grampointsrevisited}}}.%
\par
Modeling \(Z(t)\) by \(\mathcal Z_A(\theta)\) should seem natural:  both are real-valued functions on \(\mathbb R\), and (with \(N \approx \log t/2\pi\)) have the same average spacing between their zeros.%
\par
Modeling \(\zeta(s)\) by \(\Lambda_A(z)\) is slightly more subtle:  one is a function which (conjecturally) has its zeros on the line \(\sigma = \frac12\), and the other has its zeros on the circle \(\abs{z} = 1\). How to translate between those worlds? The answer is to identify the half-plane \(\sigma \gt \frac 12\) with the interior of the unit disc \(\abs{z} \lt 1\). That choice makes sense for several reasons; two of the author's favorites are: \(\lim_{s\to +\infty} \zeta(s) = 1\) and \(\lim_{z\to 0} \Lambda_A(z) = 1\); and all zeros of \(\zeta(s)\) being simple and lying on \(\sigma = \frac12\) implies~\hyperlink{speiser}{[{\xreffont 107}]} all zeros of \(\zeta'(s)\) lie in \(\sigma \gt \frac12\), while all zeros of \(\Lambda_A(z)\) being simple and lying on \(\abs{z}=1\) implies (by Gauss-Lucas) all zeros of \(\Lambda_A'(z)\) lie in \(\abs{z} \lt 1\).%
\par
The most natural mapping from \(\sigma \gt \frac 12\) to \(\abs{z} \lt 1\) is \(s \mapsto e^{\frac12 - s}\), as in \hyperref[prin_Zandzeta]{Principle~{\xreffont\ref{prin_Zandzeta}}}.%
\end{subsectionptx}
\typeout{************************************************}
\typeout{Subsection 7.5 Analogies, and lack thereof}
\typeout{************************************************}
\begin{subsectionptx}{Subsection}{Analogies, and lack thereof}{}{Analogies, and lack thereof}{}{}{recipeera}
We have seen that, conjecturally, to leading order the zeros of the \(\zeta\)-function have the same limiting local statistics as the \(\GUE\) or \(\CUE\).  When modeling the \(\zeta\)-function at height~\(T\) we use Haar-random characteristic polynomials from \(U(N)\) under the matching \(N \leftrightarrow \log T/2\pi\). One of three things can happen.%
\begin{paragraphs}{Case 1. Agreement to leading order.}{recipeera-3}%
For example, the author conjectured~\hyperlink{Fratios}{[{\xreffont 46}]}%
\begin{equation}
\frac{1}{T}\int_0^T \frac{\zeta(\frac12 + a + it)\zeta(\frac12 + b - it)}
{\zeta(\frac12 + c + it)\zeta(\frac12 + d - it)} \, dt
\sim
\frac{(a+d)(b+c)}{(a + b)(c+d)} - T^{-a-b} \frac{(c-a)(d-b)}{(a + b)(c+d)}\text{,}\label{farmerconjecture}
\end{equation}
for \(a, b, c, d \ll 1/\log T\), as \(T\to\infty\).  For characteristic polynomials we have the theorem~\hyperlink{HPZ}{[{\xreffont 68}]}\hyperlink{CFZ}{[{\xreffont 32}]}%
\begin{equation}
\left\langle \frac{\Lambda(e^{-a})\overline{\Lambda(e^{-b})}}
{\Lambda(e^{-c})\overline{\Lambda(e^{-d})}}\right\rangle_{U(N)}
\sim
\frac{(a+d)(b+c)}{(a + b)(c+d)} - e^{-N(a+b)} \frac{(c-a)(d-b)}{(a + b)(c+d)}\text{,}\label{farmeranalogue}
\end{equation}
as \(N\to\infty\). Those expressions fit the analogy in \hyperref[prin_Zandzeta]{Principle~{\xreffont\ref{prin_Zandzeta}}} and are identical under \(N \leftrightarrow \log T\). Both expressions can be made more precise; they have different lower order terms, with the primes playing an important role for the \(\zeta\)-function.  (See~\hyperlink{CS}{[{\xreffont 38}]}, Section~4.)%
\par
In \hyperref[waves]{Section~{\xreffont\ref{waves}}} we described the result of Bombieri and Hejhal~\hyperlink{BomHej}{[{\xreffont 24}]} that if a set of Z-functions satisfy RH, then a linear combination has 100\% of its zeros on the critical line.  The same holds for characteristic polynomials~\hyperlink{BCNN}{[{\xreffont 11}]}: if \(a_1,\dots,a_n\) are nonzero real numbers and \(A_1,\dots,A_n\) are chosen independently and Haar-random from \(U(N)\), then the expected number of zeros of \(\sum a_n \mathcal Z_{A_n}\) on the unit circle is \(N-o(N)\).%
\end{paragraphs}%
\begin{paragraphs}{Case 2. Agreement to leading order, after inserting an arithmetic factor.}{recipeera-4}%
This situation appears in the moments of \(L\)-functions, as illustrated in \hyperref[leadingtwokthmoment]{({\xreffont\ref{leadingtwokthmoment}})} and \hyperref[CUE2k]{({\xreffont\ref{CUE2k}})}%
\end{paragraphs}%
\begin{paragraphs}{Case 3. Failure of the analogy.}{recipeera-5}%
For \(L\)-functions, RH implies the Lindelöf  Hypothesis (LH), see \hyperref[conjLH]{Conjecture~{\xreffont\ref{conjLH}}}. But for unitary polynomials the analogue of RH is true but the analogue of LH can fail.  For example, the unitary polynomial \((z+1)^N\) achieves the value \(2^N\) on the unit circle, which is not \(\ll e^{\varepsilon N}\) for all \(\varepsilon > 0\).%
\par
A less extreme example is a degree \(N\) polynomial with all zeros in half of the unit circle: it takes values as large as \(\sqrt{2}^N\). Somewhat less obviously:  LH fails for a degree \(N\) unitary polynomial which has no zeros in an arc of a fixed length. The relationship between large zero gaps and the size of the function is central to the main theme of this paper, and is the topic of \hyperref[extreme]{Section~{\xreffont\ref{extreme}}}. A related situation is key to one of our refutations, see \hyperref[blanc]{Subsection~{\xreffont\ref{blanc}}}.%
\end{paragraphs}%
\begin{paragraphs}{How to have confidence in the predictions?}{recipeera-6}%
Given the range of possibilities above, how can one to give credence to any specific prediction?%
\par
The 3rd case, complete failure, may seem alarming but in fact it is of little concern.  The goal is to model statistically. By \hyperref[KSlaw]{Principle~{\xreffont\ref{KSlaw}}}, at height \(T\) the \(\zeta\)-function is modeled by Haar-random characteristic polynomials from~\(U(N)\) with \(N=\log(T/2\pi)\). More specifically, by \hyperref[eNprin]{Principle~{\xreffont\ref{eNprin}}}, \(e^N\) random matrices are chosen from~\(U(N)\). That may seem like a large number, but one would never expect to encounter a polynomial with all its zeros in half of the circle. Examining the \(\CUE(N)\) measure, \hyperref[betameasure]{({\xreffont\ref{betameasure}})} with \(\beta=2\), we see that a polynomial with all zeros in half of the circle would have many of the terms in that product be less than half the average.  Therefore such a polynomial is \(e^{-c N^2}\) times less likely than typical, so it has little chance of appearing in a sample of size~\(e^N\) if \(N\) is large. The same argument applies in the case of polynomials with no zeros in an arc of fixed width.%
\par
Thus, combining both parts of the Keating-Snaith law finds that the Lindelöf Hypothesis is predicted by random matrix theory.%
\par
Cases 1 and 2 are actually the same: the arithmetic factor in the first case happens to be~\(1\). Those arithmetic factors are well-understood, having arisen long before RMT entered the picture.  In any specific case one can identify the arithmetic factor.%
\par
Note that this discussion is mostly moot, because tools like \terminology{the recipe}~\hyperlink{CFKRS}{[{\xreffont 31}]} and \terminology{the ratios conjecture}~\hyperlink{CFZ}{[{\xreffont 32}]} directly produce expressions which include any arithmetic factors and furthermore include all main terms. In particular, Case~1 and Case~2 above are handled by the ratios conjecture and the recipe, respectively. However, RMT remains useful for the purpose of this paper because it enables us to generate examples for building intuition about \(Z(t)\) for large~\(t\).%
\end{paragraphs}%
\end{subsectionptx}
\end{sectionptx}
\typeout{************************************************}
\typeout{Section 8 Waves in a unitary polynomial}
\typeout{************************************************}
\begin{sectionptx}{Section}{Waves in a unitary polynomial}{}{Waves in a unitary polynomial}{}{}{rmtwaves}
\begin{introduction}{}%
As defined in \hyperref[RMTZ]{({\xreffont\ref{RMTZ}})},  the rotated characteristic polynomial \(\mathcal{Z}_A(\theta)\)  for Haar-random \(A \in U(N)\) is a model for \(Z(t)\) for \(t\) of size \(e^N\). If we let \(N=1000\), we obtain a model for the zeta function at height \(e^{1000} \approx 10^{434}\). That height is sufficient to ``see'' the carrier waves in the characteristic polynomial.  The analogy between \(L\)-functions and characteristic polynomials suggests that carrier waves in the \(\zeta\)-function at that height should be comparable.%
\par
We will illustrate carrier waves by picking \emph{one} large random matrix and exploring its characteristic polynomial in detail. The goal is provide intuition for carrier waves \textemdash{} these examples will not show extreme behavior. We will see that carrier waves occur within each individual random unitary polynomial.%
\end{introduction}%
\typeout{************************************************}
\typeout{Subsection 8.1 An example of a carrier wave}
\typeout{************************************************}
\begin{subsectionptx}{Subsection}{An example of a carrier wave}{}{An example of a carrier wave}{}{}{rmtwaves-3}
Suppose \(B \in U(1000)\) is a specific (randomly chosen) matrix, which is fixed and will be used in all the illustrations below. The function \(\mathcal{Z}_B(\theta)\) is real-valued, periodic with period \(2\pi\), and in principle it can be graphed. However, that graph would convey little useful information because 1000 wiggles across the width of a piece of paper is beyond the resolution of what can be printed or seen.  Instead, we offer two other ways to graph that function and see the carrier wave.%
\par
\hyperref[fig_two1000examples]{Figure~{\xreffont\ref{fig_two1000examples}}} shows graphs of \(\mathcal{Z}_B(\theta)\) over two intervals of width \(2\pi \times 30/1000\). Note the vertical scales in the graphs.%
\begin{figureptx}{Figure}{Graphs of \(\mathcal{Z}_B(\theta)\) over two intervals of width \(2\pi \times 0.03\).}{fig_two1000examples}{}%
\begin{image}{0}{1}{0}{}%
\includegraphics[width=\linewidth]{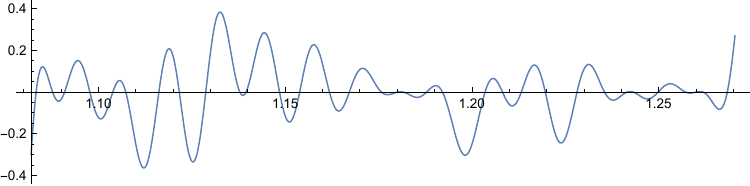}
\end{image}%
\begin{image}{0}{1}{0}{}%
\includegraphics[width=\linewidth]{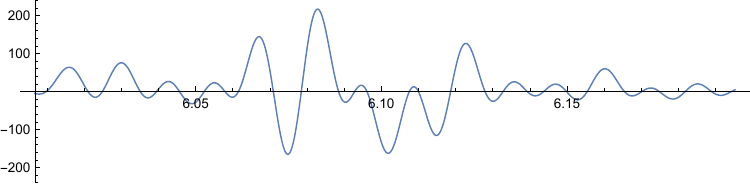}
\end{image}%
\tcblower
\end{figureptx}%
\hyperref[fig_two1000examples]{Figure~{\xreffont\ref{fig_two1000examples}}} illustrates the effect of the carrier wave. Both graphs are part of the \emph{same} polynomial, so the global scale factors are equal.  The variation in the local zero spacing has a similar effect in both regions, yet there is a factor of \(500\) difference in the graphs. That factor of \(500\) is due to the carrier wave.%
\par
We can use the idea of \hyperref[eqn_sineapprox]{({\xreffont\ref{eqn_sineapprox}})} to measure the carrier wave in \(\mathcal{Z}_B(\theta)\), with the obvious modification of replacing \(\cos(x)\) by a function with equally spaced zeros on the unit circle: \(c(\theta) = z^{N/2} - z^{-N/2}\), where \(z=e^{i\theta}\). At a given point \(e^{i\theta_0}\), we move the nearby zeros of \(c(\theta)\) to match the zeros of the polynomial, and then choose a scale factor so the functions are equal at \(e^{i\theta_0}\).  That scale factor is the carrier wave at \(e^{i \theta_0}\).  The resulting carrier wave for our example function \(\mathcal{Z}_B(\theta)\), sampled at a large number of points around the circle, is shown in \hyperref[fig_wave1000]{Figure~{\xreffont\ref{fig_wave1000}}}.  In the notation of \hyperref[eqn_sineapprox]{({\xreffont\ref{eqn_sineapprox}})}, we set \(K=10\), meaning that \(10\) zeros on either side of a given point were used to calculate the carrier wave. Note: the vertical axis is on a logarithmic scale. %
\begin{figureptx}{Figure}{The carrier wave for \(\mathcal{Z}_B(\theta)\), on a logarithmic scale, calculated using \(K=10\) in \hyperref[eqn_sineapprox]{({\xreffont\ref{eqn_sineapprox}})}.}{fig_wave1000}{}%
\begin{image}{0}{1}{0}{}%
\includegraphics[width=\linewidth]{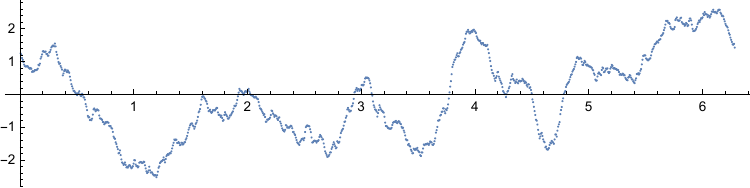}
\end{image}%
\tcblower
\end{figureptx}%
\hyperref[fig_wave1000]{Figure~{\xreffont\ref{fig_wave1000}}} indicates that near \(e^{1.2 i }\) the polynomial is wiggling with small amplitude, and near \(e^{6.1 i }\) it is wiggling with large amplitude.  That is how the examples were chosen for \hyperref[fig_two1000examples]{Figure~{\xreffont\ref{fig_two1000examples}}}.%
\begin{paragraphs}{Lower order effects from the zeros.}{rmtwaves-3-9}%
The carrier wave is the primary factor in determining the large values, but the local spacing will still have an effect.  If one focuses on the maximum value for a specific characteristic polynomial, the variation in the local spacing will cause that maximum to occur within a larger than average gap. This can be seen in the lower plot in \hyperref[fig_two1000examples]{Figure~{\xreffont\ref{fig_two1000examples}}}, where the maximum near \(6.083\) occurs in a gap of \(1.6\) times the average.  See Figure~5 in \hyperlink{FGK}{[{\xreffont 57}]} for a detailed numerical study of this phenomenon.%
\par
At \(N=1000\) the carrier wave is more important than the local variation. The  polynomial in \hyperref[fig_two1000examples]{Figure~{\xreffont\ref{fig_two1000examples}}} has 20 gaps which are more than twice the average, the largest of which is more than \(2.45\) times the average. But the local maximum in those gaps is smaller than in the \(1.6\) gap in the lower graph in \hyperref[fig_two1000examples]{Figure~{\xreffont\ref{fig_two1000examples}}} because the carrier wave in those other regions is smaller. In the random matrix world it is easy to compute in realms where \hyperref[mistakegap]{Mistaken Notion~{\xreffont\ref{mistakegap}}} is seen to be mistaken.%
\par
The zeros also have an influence on departures from the CLT at large values: the distribution has the shape of a Gaussian but is smaller by a multiplicative factor. See [\hyperlink{ArBa1}{{\xreffont 1}}, \hyperlink{ArBa2}{{\xreffont 2}}, \hyperlink{FMN1}{{\xreffont 55}}, \hyperlink{Rad1}{{\xreffont 92}}].%
\end{paragraphs}%
\end{subsectionptx}
\typeout{************************************************}
\typeout{Subsection 8.2 The density of zeros is more important than individual gaps}
\typeout{************************************************}
\begin{subsectionptx}{Subsection}{The density of zeros is more important than individual gaps}{}{The density of zeros is more important than individual gaps}{}{}{sec_density}
A close examination of the two plots in \hyperref[fig_two1000examples]{Figure~{\xreffont\ref{fig_two1000examples}}} reveals another key fact.  The two plots cover equal widths, namely 30 times the average gap between zeros.  But the first plot has 33 zeros, while the second plot has only 29 zeros.  What we have observed is:%
\begin{principle}{Principle}{}{}{prin_density}%
The main contribution to the carrier wave is the difference between the density of nearby zeros and the average density: the local density is negatively correlated with the size of the function.%
\end{principle}
Thus, the gaps between zeros determine the size of the function, but it is collections of nearby gaps which are the dominant factor: not individual gaps.%
\par
\hyperref[prin_density]{Principle~{\xreffont\ref{prin_density}}} would be more useful if we had a definition of ``local density'' and a way to measure it.  We will now develop an expression based on the gaps between widely spaced zeros.%
\par
In the top graph of \hyperref[fig_two1000examples]{Figure~{\xreffont\ref{fig_two1000examples}}} the function is small over a wide region. \hyperref[localconstant]{Subsection~{\xreffont\ref{localconstant}}} described the scale of the graph as arising from the zeros outside that region.  Since the zeros in that region are closer together than average, one can view this as being caused by the zeros on either side pushing in towards that region, compressing those zeros. Equivalently, one expects the distance between a zero to the left of that region and a zero to the right of that region to be smaller than expected (based on the number of intervening zeros and the average gap size). If \(\gamma, \gamma'\) are zeros, and%
\begin{equation}
\tilde{\gamma}' - \tilde{\gamma} = j + \delta(\gamma,\gamma')\label{sec_density-6-4}
\end{equation}
where \(j\) is the expected normalized gap between \(\gamma\) and \(\gamma'\), then we will call \(\delta(\gamma,\gamma')\) the \terminology{\(j\)th neighbor discrepancy}. Thus, when the zeros are more dense in a region, the \(j\)th neighbor discrepancy for pairs of zeros either side of that region, should tend to be negative.%
\par
To turn the neighbor discrepancy into a measure of the density, we need to determine the relative weights to give to all the neighbor gaps spanning the region.  Consider a function \(F(t)\) in a region centered on~\(0\), and renumber the zeros of \(F\) as \(\cdots \le \gamma_{-2} \le \gamma_{-1} \lt
0 \lt \gamma_1 \le \gamma_2 \le \cdots \).%
\begin{principle}{Principle}{}{}{measuredensity}%
The logarithm of the carrier wave is given to leading order by the \terminology{density wave}, expressed as a weighted sum of the \(j\)th neighbor discrepancies by%
\begin{equation}
\sum_{j\ge J} \frac{\delta(\gamma_{-j},\gamma_j)}{j}\text{.}\label{densityformula}
\end{equation}
\end{principle}
The sum \hyperref[densityformula]{({\xreffont\ref{densityformula}})} converges, by Dirichlet's test.%
\par
\hyperref[fig_density1000]{Figure~{\xreffont\ref{fig_density1000}}} compares the density wave of zeros for the same degree 1000 polynomial used in the above examples, to the logarithm of the carrier wave from \hyperref[fig_wave1000]{Figure~{\xreffont\ref{fig_wave1000}}}. Here the density wave is measured by \hyperref[densityformula]{({\xreffont\ref{densityformula}})} with \(J=11\). Note that the thin blue graph was calculated using \(20\) zeros centered around a given point, while the thicker red graph was calculated using the complementary set of 980 zeros. The agreement between those two functions, calculated using different sets of zeros, is remarkable.%
\begin{figureptx}{Figure}{The thicker red graph is the local density of zeros (the density wave) measured by \hyperref[densityformula]{({\xreffont\ref{densityformula}})} with \(J=11\), for the degree 1000 polynomial \(\mathcal Z_B(\theta)\). The thinner blue graph is the (logarithm of the) carrier wave for the same polynomial, calculated with \(K=10\).  The similarity of those functions illustrates \hyperref[prin_density]{Principle~{\xreffont\ref{prin_density}}} and \hyperref[measuredensity]{Principle~{\xreffont\ref{measuredensity}}}.}{fig_density1000}{}%
\begin{image}{0}{1}{0}{}%
\includegraphics[width=\linewidth]{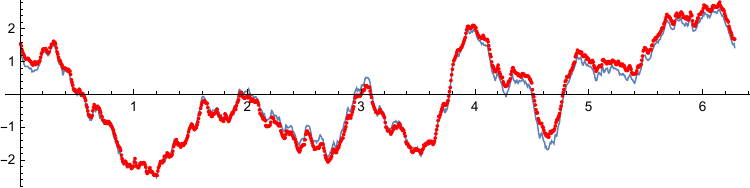}
\end{image}%
\tcblower
\end{figureptx}%
\begin{note}{Note}{}{densitynotunique}%
The formula in \hyperref[measuredensity]{Principle~{\xreffont\ref{measuredensity}}} depends on \(J\), so the density of the zeros at a point is not uniquely defined; the same shortcoming applies to the local scale factor (i.e., the carrier wave) which depends on the parameter \(K\), as observed in \hyperref[wavenotdefined]{Note~{\xreffont\ref{wavenotdefined}}}.  However, those are the same shortcomings: an interval of zeros is used to determine the value of the carrier wave. If the zeros used to calculate the pressure wave are the complement of the zeros chosen to calculate the carrier wave, as in \hyperref[fig_density1000]{Figure~{\xreffont\ref{fig_density1000}}}, then it is perhaps not surprising in retrospect that those neighbor gaps provide an accurate measure of the density of zeros they contain, and so are highly correlated with the carrier wave.%
\par
An unresolved issue is how to identify the appropriate window. In \hyperref[how_wide]{Subsection~{\xreffont\ref{how_wide}}} we suggest that the carrier wave for degree \(N\) polynomials covers around \(\log N\) zeros, which is around \(7\) for \(N=1000\), so the size window we are using in these examples is not unreasonable. Higher degree polynomials will require a wider window.%
\par
If \(N\) is sufficiently large then the number of zeros in the interval used to calculate the wave will itself be large, suggesting some type of fractal waves-within-waves phenomenon. Directly exhibiting such behavior, even in the random matrix world, seems out of reach, although some of the ideas in \hyperref[multiscale]{Section~{\xreffont\ref{multiscale}}} could be relevant.%
\end{note}
\begin{paragraphs}{Justification of Principle~{\xreffont\ref*{measuredensity}}.}{sec_density-13}%
We now justify \hyperref[densityformula]{({\xreffont\ref{densityformula}})}. Recall that we are considering a function~\(F\) with zeros \(\cdots \le \gamma_{-2} \le \gamma_{-1} \lt
0 \lt \gamma_1 \le \gamma_2 \le \cdots \). Let \(\kappa\) be the expected density of zeros near \(0\). For small \(t\) we have%
\begin{equation}
F(t/\kappa) = \prod_{j=1}^\infty
\biggl(1-\frac{t}{\tilde{\gamma}_j}\biggr)
\biggl(1-\frac{t}{\tilde{\gamma}_{-j}}\biggr)\text{.}\label{fproduct}
\end{equation}
\par
In \hyperref[fproduct]{({\xreffont\ref{fproduct}})} write \(\tilde{\gamma}_j = j \mp \frac12 + \delta_j\), where \(\mp \frac12\) has the opposite sign of \(j\). That choice assures that \(\delta_j\) is \(0\) on average. We have%
\begin{align}
F(t/\kappa) =& \prod_{j=1}^\infty
\biggl(1-\frac{t}{j - \tfrac12 + \delta_j}\biggr)
\biggl(1-\frac{t}{-j+\tfrac12 + \delta_{-j}}\biggr)\label{fproductrearranged-1}\\
=& \prod_{j=1}^\infty
\biggl(1-\frac{t}{j-\frac12}+ \frac{t\delta_j}{(j-\frac12)^2}-\frac{t \delta_j^2}{(j-\frac12)^3} + \cdots\biggr)\label{fproductrearranged-2}\\
& \phantom{xxxx}\times
\biggl(1+\frac{t}{j-\frac12}+ \frac{t\delta_{-j}}{(j-\frac12)^2}+\frac{t \delta_{-j}^2}{(j-\frac12)^3} + \cdots\biggr)\label{fproductrearranged-3}\\
=& \prod_{j=1}^\infty
\biggl(1-\frac{t^2 + t (\delta_j + \delta_{-j})}{(j-\frac12)^2}+
\frac{t^2(\delta_j-\delta_{-j}) - t(\delta_j^2 - \delta_{-j}^2)}{(j-\frac12)^3}+\cdots\biggr)\label{fproductrearranged-4}\\
=& \prod_{j=1}^\infty
\biggl(1-\frac{t^2 }{(j-\frac12)^2}+
\frac{t^2(\delta_j-\delta_{-j}) }{(j-\frac12)^3}+\cdots\biggr)\label{fproductrearranged-5}\\
=& \prod_{j=1}^\infty
\biggl(1-\frac{t^2 }{(j-\frac12)^2}\Bigl( 1+
\frac{\delta_j-\delta_{-j} }{ j-\frac12 }\Bigr)+\cdots\biggr)\text{.}\label{fproductrearranged-6}
\end{align}
On the next-to-last line we moved terms involving \(\delta_j + \delta_{-j}\) and \(\delta_j^2 - \delta_{-j}^2\) into the lower order terms because those expressions do not detect departures from the average zero spacing. Now recognize \(\prod_{j\ge 1}\bigl(1-t^2/(j-\frac12)^2\bigr)\) as \(\cos(\pi t)\) and factor it out of the above expression. The terms with \(j\ge J\) give the carrier wave. Taking the logarithm, expanding, and replacing \(j-\frac12\) by \(j\), because \(j\ge J\) is large, we have \hyperref[densityformula]{({\xreffont\ref{densityformula}})} because \(\delta_j-\delta_{-j}=\delta(\gamma_{-j},\gamma_j)\).%
\end{paragraphs}%
\end{subsectionptx}
\typeout{************************************************}
\typeout{Subsection 8.3 The carrier wave and \(S(t)\)}
\typeout{************************************************}
\begin{subsectionptx}{Subsection}{The carrier wave and \(S(t)\)}{}{The carrier wave and \(S(t)\)}{}{}{rmtwaves-5}
The relationship between zero density and the size of the carrier wave can also be expressed in terms of \(S(t)\).  Since \(S(t)\) is the error term in the zero counting function, we see that \(S(t)\) will be generally increasing in a region where the zeros are more dense than the local average, and vice-versa. By ``generally'' we mean when averaged across a range of several zeros, because of course \(S(t)\) is decreasing wherever it is continuous, and jumps by 1 unit at each critical zero.  Thus we have:%
\begin{principle}{Principle}{}{}{prin_Z_S}%
In regions where \(Z(t)\) is particularly large, \(S(t)\) will on average be decreasing.  In regions where \(Z(t)\) is particularly small, \(S(t)\) will on average be increasing.%
\end{principle}
\hyperref[fig_S1000wave]{Figure~{\xreffont\ref{fig_S1000wave}}} shows \(\mathcal S_B(\theta)\),  superimposed on the logarithm of the carrier wave, for \(\mathcal{Z}_B(\theta)\).%
\begin{figureptx}{Figure}{The thinner blue graph is the carrier wave of \(\mathcal{Z}_B(\theta)\) from \hyperref[fig_wave1000]{Figure~{\xreffont\ref{fig_wave1000}}} and the thicker green graph is the \(\mathcal S_B(\theta)\) for \(\mathcal{Z}_B(\theta)\).  The graphs illustrate \hyperref[prin_Z_S]{Principle~{\xreffont\ref{prin_Z_S}}}: \(\mathcal S_B(\theta)\) is increasing\slash{}decreasing in regions where \(\mathcal{Z}_B(\theta)\) is particularly small\slash{}large.}{fig_S1000wave}{}%
\begin{image}{0}{1}{0}{}%
\includegraphics[width=\linewidth]{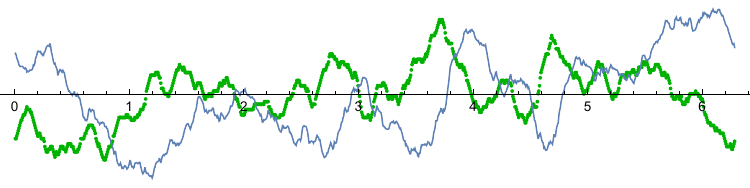}
\end{image}%
\tcblower
\end{figureptx}%
\hyperref[prin_Z_S]{Principle~{\xreffont\ref{prin_Z_S}}} explains why carrier waves are not observed in plots of \(Z(t)\): \(S(t)\) remains small throughout the realm accessible by computers, so there is no opportunity for it to increase or decrease enough to create a visible carrier wave.%
\par
Note that the relationship described in \hyperref[prin_Z_S]{Principle~{\xreffont\ref{prin_Z_S}}} does not contradict Selberg's result that \(\log\abs{Z(t)}\) and \(S(t)\) are statistically independent. However, it does suggest (negative) correlations in lower order terms and in the tails.%
\par
The failure to observe carrier waves computationally is also explained by \hyperref[prin_density]{Principle~{\xreffont\ref{prin_density}}}. We cloak this in fancy language in the next subsection.%
\end{subsectionptx}
\typeout{************************************************}
\typeout{Subsection 8.4 Carrier waves are an emergent phenomenon}
\typeout{************************************************}
\begin{subsectionptx}{Subsection}{Carrier waves are an emergent phenomenon}{}{Carrier waves are an emergent phenomenon}{}{}{emergent}
The conclusion to all the discussion in this section is that carrier waves are an \terminology{emergent phenomenon}.  That is defined as an observable property of a system which arises from the interaction of many components of the system, but is not a property of those individual components. In the case at hand, the components are the zeros and the interaction is the repulsion between the zeros. The emergent property is variations in the density of zeros.%
\par
An emergent property cannot be understood merely by examining the components and their interactions. The study of microscopic water droplets interacting with gaseous nitrogen and oxygen does not reveal the different types of clouds, and cannot tell you that a particular cumulus cloud is shaped like a rabbit.%
\par
Emergent phenomena can only exist in a large system with many interacting components. Points made earlier in this section explain why carrier waves can only occur in high degree polynomials. By \hyperref[prin_density]{Principle~{\xreffont\ref{prin_density}}}, the carrier wave arises from fluctuations in the density of the zeros. The concept of density does not make sense unless one has more than just a few points. Once one has enough points to meaningfully talk about their density, those points must exist in a larger system in order to allow the local density to be significantly different than average. And that system must be yet larger in order for the repulsion between points to allow enough flexibility that significant fluctuations in density are likely. Thus, carrier waves can only exist within a large system.%
\par
The explanation above describes why carrier waves do not appear in unitary polynomials until the degree is large. But what about the \(\zeta\)-function, which has infinitely many zeros? Isn't that a large system, even if one restricts to the range accessible by computers?%
\par
The explanation lies in the rigidity of the zero spacing, i.e.\@, the slow growth of \(S(t)\), interpreted as the error term in the zero counting function.  Since the zeros must stay close to their expected location, there is limited opportunity to create a region where there are significantly more, or significantly fewer zeros. Not only does \(S(t)\) grow slowly, it also changes sign frequently. It is as if the \(\zeta\)-function not only resets itself after every interval of length \(2\pi\), there is an anti-correlation between extreme values on one interval and the next.  Thus, a sequence of independent characteristic polynomials slightly overestimates the frequency of large values. In other words, the arithmetic factors used to adjust random matrix predictions, such as \(a_k\) in the \(2k\)th moment of the \(\zeta\)-function, see \hyperref[leadingtwokthmoment]{({\xreffont\ref{leadingtwokthmoment}})}, tend to be very small. This is an aspect of \hyperref[KSoverestimate]{Principle~{\xreffont\ref{KSoverestimate}}}.%
\end{subsectionptx}
\typeout{************************************************}
\typeout{Subsection 8.5 A hidden choice in the Keating-Snaith law}
\typeout{************************************************}
\begin{subsectionptx}{Subsection}{A hidden choice in the Keating-Snaith law}{}{A hidden choice in the Keating-Snaith law}{}{}{kappa1}
The matching \(N=\log(T/2\pi)\) is designed to allow the random characteristic polynomial \(\mathcal Z_A(\theta)\) with \(A\in U(N)\) to model \(Z(t)\) for \(t\approx T\). The justification is ``matching the density of zeros'', but that justification contains a hidden choice. If instead we chose some real number \(\kappa\), let \(A\in U(\kappa N)\), and considered \(\mathcal Z_A(\theta/\kappa)\), we again would have random functions with the same average zero gap as~\(Z(t)\). Maybe there is a better choice than \(\kappa=1\)?  It is tempting to think that a larger value of \(\kappa\) would be better, because the standard choice imposes the absurdity \(Z(t) = Z(t + 2\pi)\) \textemdash{} although \(Z(t) = Z(t + 2\pi\kappa)\) for some \(\kappa \gt 1\) is not particularly less absurd.%
\par
Choosing a larger value for \(\kappa\) does not work, and carrier waves explain why. A rescaled characteristic polynomial \(\mathcal Z_A(\theta/10)\) for \(A\in U(10 N)\)  does not look like the concatenation of \(10\) different \(\mathcal Z_A(\theta)\) for \(A\in U(N)\). The rescaled polynomial will have larger carrier waves, because the system is larger so there is more scope for the zeros to slosh back-and-forth and create regions of higher and lower density. A side-effect will be that moments of the polynomials will not accurately predict moments of the \(\zeta\)-function:  the maximum from a \(10\times\) larger system will typically be much larger than the maximum among \(10\) different smaller systems.%
\par
Thus, in some ways, such as the lack of large carrier waves at low height, the \(\zeta\)-function behaves like a sequence of small systems, not a single large system.  But actually it is specific properties of the large \(\zeta\)-system which is the underlying reason the carrier waves start out small and are confined to short intervals.  This is explained in the next section.%
\end{subsectionptx}
\typeout{************************************************}
\typeout{Subsection 8.6 Spectral rigidity}
\typeout{************************************************}
\begin{subsectionptx}{Subsection}{Spectral rigidity}{}{Spectral rigidity}{}{}{spectralrigidity}
Another way to see that the \(\zeta\)-function cannot be modeled by a small number of large characteristic polynomials involves the analogue of what is called \terminology{spectral rigidity} \hyperlink{berryA}{[{\xreffont 16}]} in physics. For the \(\zeta\)-function, that terminology is just another way of saying that \(S(T)\), the error term in the zero counting function \(N(T)\), grows very slowly. We examine this in the context of the neighbor spacing between zeros, in particular the normalized \(j\)th nearest neighbor spacing \(\tilde\gamma_{n+j} - \tilde\gamma_n\). \hyperref[nn3]{Figure~{\xreffont\ref{nn3}}} shows the distribution of the 1st, 3rd, and 10th normalized neighbor spacings for \(10000\) consecutive zeros at height \(10^{12}\).  Data taken from \hyperlink{odl12}{[{\xreffont 87}]}, which is the same data used in Berry's original study \hyperlink{berryA}{[{\xreffont 16}]} of spectral rigidity in the context of the \(\zeta\)-function.%
\begin{figureptx}{Figure}{The normalized 1st, 3rd, and 10th neighbor spacing for \(\gamma_n\) for \(10^{12} \le n \lt 10^{12} + 10000\), along with the variance of each sample.}{nn3}{}%
\begin{sidebyside}{3}{0}{0}{0.05}%
\begin{sbspanel}{0.3}[bottom]%
\noindent\includegraphics[width=\linewidth]{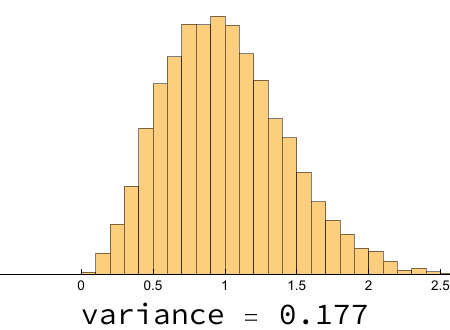}
\end{sbspanel}%
\begin{sbspanel}{0.3}[bottom]%
\noindent\includegraphics[width=\linewidth]{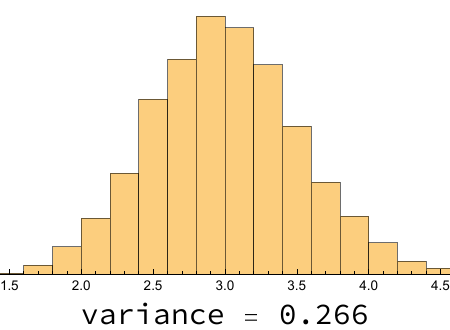}
\end{sbspanel}%
\begin{sbspanel}{0.3}[bottom]%
\noindent\includegraphics[width=\linewidth]{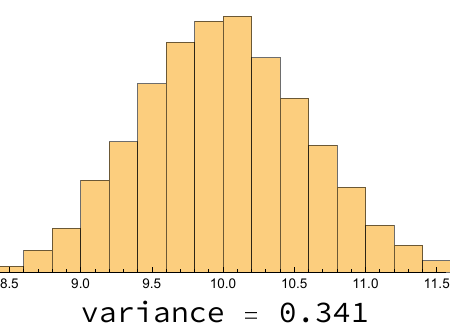}
\end{sbspanel}%
\end{sidebyside}%
\tcblower
\end{figureptx}%
The histograms in \hyperref[nn3]{Figure~{\xreffont\ref{nn3}}} have the same horizontal scales, so one can see (even without the calculated variances) that the variance of the \(n\)th neighbor spacing is growing for \(n\le 10\).  However, it is clear that the variance cannot continue to grow at that rate, because the variance of the neighbor spacing is bounded by a small multiple of the variance  \(\langle S(T)^2\rangle = \log\log T\). \hyperref[nnvar]{Figure~{\xreffont\ref{nnvar}}} shows the variance of the \(j\)th nearest-neighbor spacing for \(1\le j \le 200\) for the same set of zeros. Note that the phenomenon illustrated in \hyperref[nnvar]{Figure~{\xreffont\ref{nnvar}}} is usually expressed in terms of the \terminology{number variance} [\hyperlink{berryA}{{\xreffont 16}}, \hyperlink{berrykeatingA}{{\xreffont 17}}],%
\begin{equation}
V_T(L) = \mathrm{Var}(N(T+L) - N(T)) = \mathrm{Var}(S(T+L) - S(T))\text{,}\label{spectralrigidity-4-11}
\end{equation}
which is easier to handle theoretically but perhaps more difficult to grasp intuitively.%
\begin{figureptx}{Figure}{The blue dots show the variance of the \(j\)th nearest-neighbor spacing, \(0\le j \le 200\), for zeros \(\gamma_n\) with \(10^{12} \le n \lt 10^{12} + 10000\). The height of the horizontal red dashed line is twice the average variance of the samples, which would be the variance of the neighbor gaps if the zeros were distributed independently. The vertical dotted lines are at \(c\, \gamma_k\), with \(c\approx 3.894\) and \(\gamma_j\) the height of the \(j\)th nontrivial zero of the \(\zeta\)-function. The green dots are the variance of the \(j\)th nearest-neighbor spacing, \(0 \le j \le 24\), for eigenvalues of random matrices in~\(U(24)\).}{nnvar}{}%
\begin{image}{0}{1}{0}{}%
\includegraphics[width=\linewidth]{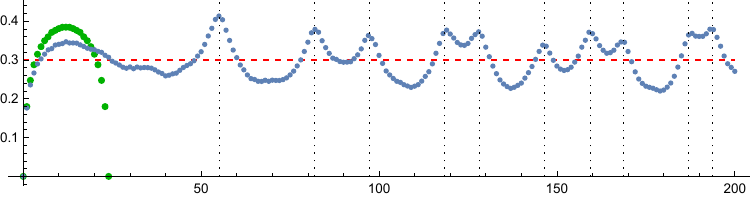}
\end{image}%
\tcblower
\end{figureptx}%
Note that the average value of \(\tilde\gamma_n - n\) is \(\frac12\), and in the sample used in the above figures, has variance \(0.15005\). If the \(j\)th nearest neighbors were distributed independently, then the \(j\)th nearest neighbor gaps would have variance \(2\times 0.15005\), which is the horizontal line in \hyperref[nnvar]{Figure~{\xreffont\ref{nnvar}}}.  Thus, the relative displacement of neighboring zeros (relative to their expected location) is sometimes positively correlated, and sometimes negatively correlated, across a span of many zeros.%
\par
The peaks in \hyperref[nnvar]{Figure~{\xreffont\ref{nnvar}}} occur near the rescaled zeros of the \(\zeta\)-function, where the scale factor is the density of the zeros at the height of the sample (in this example, \(t \approx 2.67 \times 10^{11}\)). This \terminology{resurgence} of the first few zeros occurs in many zero statistics [\hyperlink{berrykeatingA}{{\xreffont 17}}, \hyperlink{CS}{{\xreffont 38}}], typically manifested via the influence of the zeros on the size of~\(\zeta(1 + it)\).%
\par
The universality of \hyperref[betauniversality]{Principle~{\xreffont\ref{betauniversality}}} is illustrated in \hyperref[nnvar]{Figure~{\xreffont\ref{nnvar}}} by the approximate agreement, for very small gaps, between the \(\zeta\)-zeros and eigenvalues variances.  This is referred to as the \terminology{universal regime}. In the universal regime, no details about the \(\zeta\)-function are relevant to the leading-order behavior. This idea appears again in \hyperref[smallgaps]{Principle~{\xreffont\ref{smallgaps}}}. In the non-universal regime, the prime numbers (equivalently, the low-lying zeros) have a strong effect.  Only recently has this effect been treated rigorously~\hyperlink{LMQ-H}{[{\xreffont 81}]}.%
\par
The rigidity in the zeros spacing, equivalently the slow growth of \(S(t)\), prevents the zeros from moving far from their expected location, which prevents large regions with a high or low density of zeros, which prevents large carrier waves at low height.%
\end{subsectionptx}
\typeout{************************************************}
\typeout{Subsection 8.7 Extreme gaps}
\typeout{************************************************}
\begin{subsectionptx}{Subsection}{Extreme gaps}{}{Extreme gaps}{}{}{extremegaps}
In \hyperref[zero_stats]{Subsection~{\xreffont\ref{zero_stats}}} we described the GUE predictions for the largest and smallest gaps between zeros.  We now elaborate on how those predictions are made, and indicate some issues which impact those predictions.%
\begin{paragraphs}{Small gaps.}{extremegaps-3}%
\hyperref[GUEera]{Subsection~{\xreffont\ref{GUEera}}} mentioned the universality of the \(\beta\)-ensembles: the local statistics of the eigenvalues only depend (to leading order) on the degree of repulsion.  This has implications for the neighbor spacing. From the \(\CUE\) measure, \hyperref[betameasure]{({\xreffont\ref{betameasure}})} with \(\beta=2\), it follows immediately that the nearest-neighbor spacing vanishes to order~\(2\), and (by integrating \(\theta_j\) from  \(\theta_{j-1}\) to \(\theta_{j+1}\)) the next-nearest-neighbor spacing vanishes to order~\(7\). Thus, to leading order the pair correlation for the \(\CUE\), \hyperref[zetapc]{({\xreffont\ref{zetapc}})}, describes the distribution of small gaps between eigenvalues, and that prediction is not sensitive to any details about the particular system being discussed.  That is,%
\begin{principle}{Principle}{}{}{smallgaps}%
The PDF \(p_2(x)\) of the normalized gaps between zeros of the \(\zeta\)-function, or any other \(L\)-function, decays like \((\pi^2/3) x^2\) as \(x\to 0\).  That prediction is very likely to be accurate, with the prime numbers and other details particular to any specific \(L\)-function having little influence on the smallest gaps.%
\end{principle}
Any skepticism about \hyperref[smallgaps]{Principle~{\xreffont\ref{smallgaps}}} should be alleviated by \hyperref[nnPDF]{Figure~{\xreffont\ref{nnPDF}}}, which shows the distribution of the smallest normalized gaps among the first \(10^{13}\) zeros of the \(\zeta\)-function, compared to the \((\pi^2/3)x^2\) prediction. Note that \hyperref[nnPDF]{Figure~{\xreffont\ref{nnPDF}}} concerns the universal regime, so there is no contradiction with our repeated assertion that numerical calculations of the \(\zeta\)-function can give a misleading impression.%
\begin{figureptx}{Figure}{Histogram of the \(9333\) smallest normalized neighbor gaps among the first \(10^{13}\) zeros of the \(\zeta\)-function, compared to the predicted density \(\frac{\pi^2}{3}x^2\). Data from  Xavier Gourdon \hyperlink{XG}{[{\xreffont 60}]}.}{nnPDF}{}%
\begin{image}{0.1}{0.8}{0.1}{}%
\includegraphics[width=\linewidth]{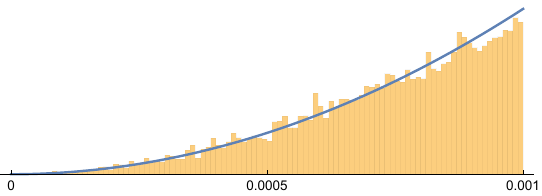}
\end{image}%
\tcblower
\end{figureptx}%
To estimate the size of the normalized smallest gaps among a sample of \(X\) gaps, calculate the gap size which has a 50\% chance of appearing:%
\begin{equation}
\text{solve }\ \ \ \ X \int_0^{\xi_{min}} A x^2\, dx = \frac12
\ \ \ \ \
\text{to get}
\ \ \ \ \
\xi_{min} = \Bigl(\frac{A X}{3}\Bigr)^{-\frac13}\text{,}\label{extremegaps-3-6-2}
\end{equation}
where \(A=\frac{\pi^2}{3}\). So, among the gaps up to height \(T\), one expects the smallest normalized gaps to be of size \(T^{-\frac13 + o(1)}\).  One can try to be more precise, but there probably is no meaning in the specific constant or power of \(\log T\). There is meaning, however, to the exponent \(-\frac13\). A small gap between two zeros forces a small value for the derivative of the \(\zeta\)-function at both of those zeros. An implication \hyperlink{HKO}{[{\xreffont 69}]} is that the predicted discrete mean values%
\begin{equation}
\frac{1}{N(T)} \sum_{0 \lt \gamma_j \le T} \abs{\zeta'(\tfrac12 + i\gamma_j)}^{2k} \sim c_k T^{2k(k+2)},\label{extremegaps-3-6-10}
\end{equation}
conjectured to hold for \(k\gt -\frac32\), cannot hold for \(k \lt -\frac32\) because (due to the smallest neighbor gaps) there are individual terms which are very large.%
\end{paragraphs}%
\begin{paragraphs}{Large gaps.}{extremegaps-4}%
The situation for large gaps is likely to be significantly more complicated.%
\par
For large random unitary matrices we have \(p_2(x) \sim e^{-\pi^2 x^2/8}\) as \(x\to \infty\).  For an individual matrix, the neighbor gaps are not independent because adjacent neighbor gaps are negatively correlated, and also they average to exactly~\(1\).  But, if one pretends those gaps are independent, then using the tail of \(p_2\) the expected maximum among \(M\) normalized gaps would be \(\sqrt{8 \log M}/\pi\).  Setting \(M=N\) gives the correct answer for the expected maximum among the gaps of an individual random matrix~\hyperlink{BA_B}{[{\xreffont 12}]}. Thus, there is some justification to using the tail of the \(\CUE\) neighbor spacing to conjecture the maximum gap over longer ranges, and Figure~1 in \hyperlink{BA_B}{[{\xreffont 12}]} shows that such a prediction is supported by data involving \(2\times 10^9\) zeros of the \(\zeta\)-function.%
\par
As illustrated in \hyperref[nnvar]{Figure~{\xreffont\ref{nnvar}}}, for more widely spaced zeros, where ``widely'' might only mean a few times the average zero spacing, the distribution of \(n\)th neighbor gaps for the \(\zeta\)-function departs significantly from that of random unitary matrices:  the \(\zeta\) gaps are more constrained. Therefore, it would not be surprising if the largest normalized gaps between zeros of the \(\zeta\)-function were smaller than the analogous normalized gaps between eigenvalues of random unitary matrices. It particular, it would not be surprising if the largest normalized neighbor gaps of the \(\zeta\)-function (either as conjectured in  \hyperlink{BA_B}{[{\xreffont 12}]}, or from a naive interpretation of the tail of \(p_2(x)\)) were actually smaller than \(\sqrt{8 \log T}/\pi\). See \hyperref[KSoverestimate]{Principle~{\xreffont\ref{KSoverestimate}}}.%
\par
Resolving such an issue with data is difficult because the distribution of large gaps decays rapidly. In this paper we only make use of the fact that the largest normalized gaps are likely to be \(O(\sqrt{\log T})\).%
\end{paragraphs}%
\end{subsectionptx}
\end{sectionptx}
\typeout{************************************************}
\typeout{Section 9 Extreme values}
\typeout{************************************************}
\begin{sectionptx}{Section}{Extreme values}{}{Extreme values}{}{}{extreme}
\begin{introduction}{}%
So far we have considered typical large and small values. That is, values within the bulk of the distribution of \(\log{\abs{Z(t)}}\). The conclusion was that those large values are due to the carrier wave.  Since the carrier wave arises from variations in the local density of zeros, once the wave is large it must stay large over a span of several zeros. Thus, the graph of the \(Z\)-function when it is particularly large looks the same as it does anywhere else \textemdash{} if you ignore the scales on the axes.%
\par
However, we have not yet ruled out the possibility that even larger values might arise from very rare events, such as an extremely large gap between zeros.  The effect might look similar to what appears in the data of Bober and Hiary \hyperlink{BobHia}{[{\xreffont 21}]}, as shown in \hyperref[fig_boberhiary]{Figure~{\xreffont\ref{fig_boberhiary}}}. We will explain why the largest neighbor gaps are not responsible for the function reaching the largest values. First a trivial observation, following immediately from \hyperref[prin_large_S_gaps]{Principle~{\xreffont\ref{prin_large_S_gaps}}} and \hyperref[maxgap]{({\xreffont\ref{maxgap}})}:%
\begin{principle}{Principle}{}{}{prin_large_S_gap_values}%
The largest gaps between zeros contribute \(O(\sqrt{\log t})\) to \(S(t)\).%
\end{principle}
It takes more effort to establish the analogous result for \(Z(t)\):%
\begin{principle}{Principle}{}{}{prin_large_Z_gaps}%
The largest gaps between zeros contribute \(O(\sqrt{\log t})\) to \(\log\abs{Z(t)}\).%
\end{principle}
Since the largest values of \(\log\abs{Z(t)}\) are conjectured to be \(\gg \sqrt{\log(t)\log\log(t)}\), \hyperref[prin_large_Z_gaps]{Principle~{\xreffont\ref{prin_large_Z_gaps}}} tells us that the largest neighbor gaps are not responsible for the extreme values.%
\par
\hyperref[prin_large_Z_gaps]{Principle~{\xreffont\ref{prin_large_Z_gaps}}} follows immediately from \hyperref[maxgap]{({\xreffont\ref{maxgap}})}, see also \hyperref[extremegaps]{Subsection~{\xreffont\ref{extremegaps}}}, combined with:%
\begin{principle}{Principle}{}{}{prin_large_gaps}%
The size of \(Z(t)\) at a point comes from two independent sources: the carrier wave, and the local arrangement of zeros. An individual large gap of \(g\) times the  average zero spacing contributes \(O(g)\) to the local maximum of \(\log\abs{Z(t)}\) within that large gap.%
\end{principle}
\hyperref[nearlarge]{Subsection~{\xreffont\ref{nearlarge}}} is devoted to justifying \hyperref[prin_large_gaps]{Principle~{\xreffont\ref{prin_large_gaps}}}.%
\end{introduction}%
\typeout{************************************************}
\typeout{Subsection 9.1 Zeros near a large gap}
\typeout{************************************************}
\begin{subsectionptx}{Subsection}{Zeros near a large gap}{}{Zeros near a large gap}{}{}{nearlarge}
To understand the effect of a large gap on the size of the function, we need information about the gaps adjacent to the large gap. From Haar measure \hyperref[betameasure]{({\xreffont\ref{betameasure}})} on \(U(N)\) we see that the expected size of the immediately adjacent gaps is smaller than average.  Bober and Hiary \hyperlink{BobHia}{[{\xreffont 21}]} expressed it as: \(S(t)\) tends to be increasing immediately before and immediately after a large gap. To isolate the effect of only the single large gap, we propose to focus on the case where \emph{the nearby zeros are in their most likely configuration}. We will do this for zeros on the unit circle.%
\par
Determining the most likely configuration is an easy computer experiment. Begin by fixing two zeros with a chosen gap and distributing the other \(N-2\) zeros on the circle anywhere outside that gap.  Equally spaced is a perfectly good starting configuration. Then repeatedly perturb the \(N-2\) zeros to increase the measure \hyperref[betameasure]{({\xreffont\ref{betameasure}})}, until the process stabilizes. The result will be a good approximation to the most likely configuration.%
\par
With \(N=74\) and a gap of 6 times the average, which is the random matrix analogue of \(Z(t)\) in \hyperref[fig_boberhiary]{Figure~{\xreffont\ref{fig_boberhiary}}}, the most likely configuration is shown in \hyperref[fig_RMT74]{Figure~{\xreffont\ref{fig_RMT74}}}.%
\begin{figureptx}{Figure}{Graphs of \(\mathcal Z(\theta)\) and \(\mathcal S(\theta)\) for a degree \(74\) polynomial with a zero gap of 6 times the average and all other zeros in their most likely configuration with respect to Haar measure~\hyperref[betameasure]{({\xreffont\ref{betameasure}})}.}{fig_RMT74}{}%
\begin{image}{0.08}{0.84}{0.08}{}%
\includegraphics[width=\linewidth]{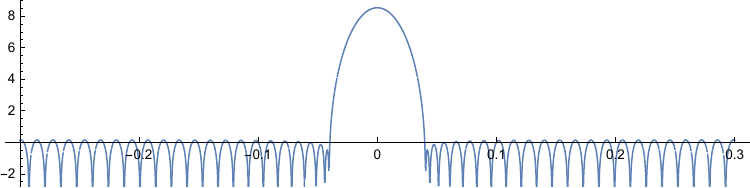}
\end{image}%
\begin{image}{0.08}{0.84}{0.08}{}%
\includegraphics[width=\linewidth]{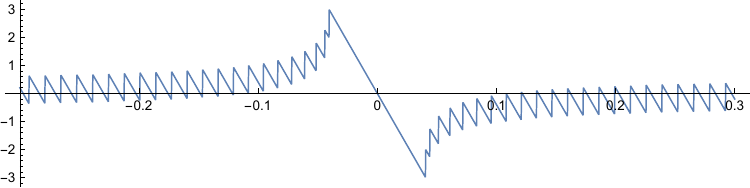}
\end{image}%
\tcblower
\end{figureptx}%
A similarity with \hyperref[fig_boberhiary]{Figure~{\xreffont\ref{fig_boberhiary}}} is the general shape of \(S(t)\) near the large gap.  A difference is that the maximum in \hyperref[fig_RMT74]{Figure~{\xreffont\ref{fig_RMT74}}} is smaller, which is due to the fact that the immediate neighbor gaps in \hyperref[fig_boberhiary]{Figure~{\xreffont\ref{fig_boberhiary}}} are wider than the most likely configuration. The wider neighboring gaps in \hyperref[fig_boberhiary]{Figure~{\xreffont\ref{fig_boberhiary}}} are not surprising, because the large gap in \hyperref[fig_RMT74]{Figure~{\xreffont\ref{fig_RMT74}}} is not as large as possible in that range. That means there is room for more variation in the nearby zeros, so less likely configurations can occur.%
\par
We see that a large gap causes nearby gaps to be smaller.  The key issue for justifying \hyperref[prin_large_gaps]{Principle~{\xreffont\ref{prin_large_gaps}}} is:  how far does that influence extend?  That is, at what distance from the large gap do the neighbor gaps (approximately) return to their average?  The answer is easy to express if we measure on the scale of the average gap between zeros.%
\begin{principle}{Principle}{}{}{prin_gap_distance}%
The normalized range of influence of a large gap is proportional to the normalized width of the gap.%
\end{principle}
So, doubling the width of a large gap will double the range of its influence on nearby neighbor gaps.%
\par
To see why \hyperref[prin_gap_distance]{Principle~{\xreffont\ref{prin_gap_distance}}} is true, consider the following thought experiment. Fix a large gap, and then distribute the other gaps in their most likely configuration.  It is helpful to think of the zeros as particles with equal charge, obeying a force law such that the particles are in equilibrium when they are in their most likely configuration. Now double all the charges.  The particles will still be in equilibrium. Then split every doubled charge into two equal charges, leave one in place, and move the other halfway toward the neighboring charge, with the movement away from the large gap.  The new configuration will not be in equilibrium, but it will be nearly so.  As the system settles into equilibrium (holding fixed the two particles forming the large gap), the particles will move very little, because they were already close to equilibrium. (The reader is invited to persuade themself that these equilibrium configurations are stable.  Also, as the system settles back into equilibrium, the slight movement of the particles brings them closer to the large gap.) The result is a new system where the large gap is the same on an absolute scale, as is the range of influence of that large gap.  But in the new system, the large gap and its influence are twice as large on the scale of the average spacing. Thus, \hyperref[prin_gap_distance]{Principle~{\xreffont\ref{prin_gap_distance}}}.%
\par
\hyperref[fig_gap230]{Figure~{\xreffont\ref{fig_gap230}}} provides another way to understand \hyperref[prin_gap_distance]{Principle~{\xreffont\ref{prin_gap_distance}}}.  For 230 zeros on the circle, which corresponds to \(Z(t)\) near \(10^{100}\), we used the procedure described at the start of this section to determine the most likely configuration with a gap of 5, 10, or 15 times the average.  The first plot in \hyperref[fig_gap230]{Figure~{\xreffont\ref{fig_gap230}}} shows the successive normalized neighbor gaps in each of the three cases. As expected, the immediate neighbor gap is small, and is smaller when the large gap is bigger. When the large gap is 5 times the average, the immediate neighbor gap is approximately \(0.32\) times the average. When the large gap is 15 times the average, the immediate neighbor gap is approximately \(0.1\) times the average. The gaps grow, approaching 1, the normalized average gap, when farther away from the large gap.  And as expected, when the large gap is larger, it takes longer for the neighbor gaps to approach 1.%
\par
The second plot in \hyperref[fig_gap230]{Figure~{\xreffont\ref{fig_gap230}}} is made from the same data, but this time the horizontal axis is scaled by the size of the large gap.  We see that the three graphs are virtually indistinguishable. In other words, measured on the scale of the large gap, the decay of the influence of that gap is independent of the size of the gap. By 3 times the width of the large gap, there is very little influence on the neighbor gaps.%
\begin{figureptx}{Figure}{The typical normalized gap between successive zeros adjacent to a large gap of 5, 10, or 15 times the average spacing.   In the plot on the top the horizontal scale is the index of the gap. On the bottom the horizontal scale is the index divided by the size of the large gap.}{fig_gap230}{}%
\begin{image}{0.1}{0.8}{0.1}{}%
\includegraphics[width=\linewidth]{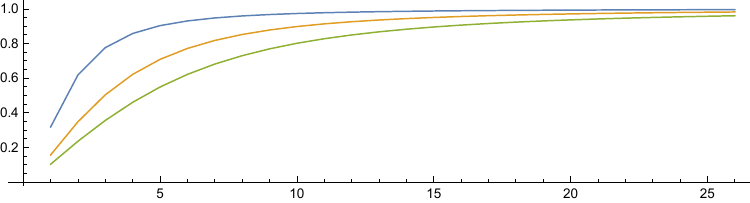}
\end{image}%
\begin{image}{0.1}{0.8}{0.1}{}%
\includegraphics[width=\linewidth]{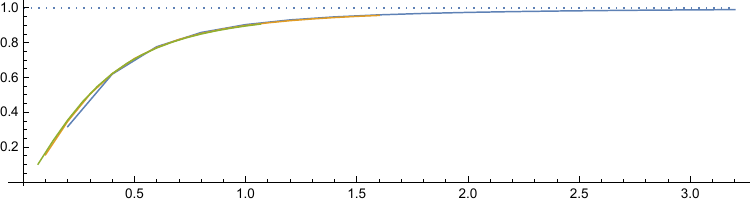}
\end{image}%
\tcblower
\end{figureptx}%
We use \hyperref[prin_gap_distance]{Principle~{\xreffont\ref{prin_gap_distance}}} to justify \hyperref[prin_large_gaps]{Principle~{\xreffont\ref{prin_large_gaps}}}.  The idea is similar to \hyperref[eqn_sineapprox]{({\xreffont\ref{eqn_sineapprox}})}. Start with a function having equally spaced zeros, and then move some of those zeros to a new location.  In this case it is easiest to start with \(f_0(z) = z^N + 1\) and then move the zeros with \(-\frac{\pi}{N} g \lt \theta_j \lt \frac{\pi}{N} g\) to create a normalized gap of size \(g\), placing those zeros in their most likely location. Denote the new value of \(\theta_{j}\) by \(\hat\theta_{j}\), and call the resulting function \(f_g(z)\). We have%
\begin{equation*}
f_g(z) = (z^N + 1) \prod_{-\pi g \le j \le \pi g}
\frac
{1-e(\hat\theta_j)}
{1-e(\theta_j)}\text{.}
\end{equation*}
By \hyperref[prin_gap_distance]{Principle~{\xreffont\ref{prin_gap_distance}}}, the zeros which were moved will have \(\abs{\hat\theta_j} \le K g/N\) where \(K\) is some absolute constant, independent of \(g\) and \(N\).  Therefore%
\begin{align*}
f_g(1) \ll \mathstrut \amp  \prod_{-\pi g \le j \le \pi g} \frac{
\frac{2\pi K g}{N}
}{
\frac{2\pi (2j - 1)}{2N}
}\\
\ll \mathstrut \amp \frac{K^{2g} g^{2\pi g}}{(\pi g)!^2}\\
= \mathstrut \amp  e^{O(g)}\text{,}
\end{align*}
the last step following from Stirling's formula. That establishes \hyperref[prin_large_gaps]{Principle~{\xreffont\ref{prin_large_gaps}}}.%
\par
We have seen that an isolated large gap has essentially the same effect on the sizes if \(S(t)\) and \(\log\abs{Z(t)}\). Yet, it was almost trivial to handle \(S(t)\), and fairly complicated to handle \(\log\abs{Z(t)}\). Why is it simple to understand the effect of large gaps on the imaginary part of \(\log \zeta(\frac12 + i t)\) but not on the real part? Is there a simple way to deal with both cases? Perhaps not: the discussion at the end of Section~1.2 of~\hyperlink{LMQ-H}{[{\xreffont 81}]} describes some essential differences between \(S(t)\) and \(\log\abs{\zeta(\frac12 + it)}\), the most important being that \(\log\abs{\zeta(\frac12 + it)}\) is unbounded in a neighborhood of its discontinuities, which they note ``cause technical difficulties''.%
\end{subsectionptx}
\typeout{************************************************}
\typeout{Subsection 9.2 More intuition for the need for carrier waves}
\typeout{************************************************}
\begin{subsectionptx}{Subsection}{More intuition for the need for carrier waves}{}{More intuition for the need for carrier waves}{}{}{wave_intuition}
We describe yet another way to see that carrier waves are primarily responsible for the size of~\(Z(t)\).%
\par
Suppose the typical large value of \(Z(t)\) were primarily due to large gaps, such as in \hyperref[fig_boberhiary]{Figure~{\xreffont\ref{fig_boberhiary}}}. Combining Selberg's CLT on the distribution of \(\log\abs{Z(t)}\) with \hyperref[prin_large_gaps]{Principle~{\xreffont\ref{prin_large_gaps}}} would imply that a positive proportion of \(t\) occur in a gap of relative size \(\sqrt{\log\log(t)}\).  In other words, the probability that a gap is larger than \(\sqrt{\log\log(t)}\) times the average, must be \(\gg 1/\sqrt{\log\log(t)}\).  This would correspond to the distribution of nearest-neighbor spacings decreasing much more slowly than conjectured. That is contrary to our observations and not consistent with the conjectures on neighbor spacing arising from RMT.%
\par
The situation becomes more clear when we consider small values. Obviously the zeros on the critical line cause very small (i.e.\@, large negative) values for \(\log\abs{Z(t)}\). It is not difficult to see (consider \(\log\abs{\sin x}\), for example) that the neighborhoods of the zeros cannot account for \(\log\abs{Z(t)}\) being \(\ll -\sqrt{\log\log(t)}\) for a positive proportion of \(t\).  Therefore the typical small values must arise from the function staying small between neighboring zeros.%
\par
Suppose two neighbor zeros are separated by \(g\) times the average spacing, with \(g\) very small. By considering \(x(g-x)\)  or \(\sin(x) - 1 + g^2\), one expects the local maximum between those zeros to scale as \(g^2\).  Thus, if the local zero spacing is the main cause of those small values, then a positive proportion of \(t\) must occur within a gap of size \(o(1)\) times the local average.  That is absurd because it requires more than 100\% of the gaps to be very small! Thus, the large proportion of small values required by Selberg's CLT must come from a conspiracy involving many gaps, with most of those gaps not particularly small.%
\end{subsectionptx}
\typeout{************************************************}
\typeout{Subsection 9.3 The Lindelöf Hypothesis and self-reciprocal polynomials}
\typeout{************************************************}
\begin{subsectionptx}{Subsection}{The Lindelöf Hypothesis and self-reciprocal polynomials}{}{The Lindelöf Hypothesis and self-reciprocal polynomials}{}{}{ssecLindel}
As mentioned in \hyperref[recipeera]{Subsection~{\xreffont\ref{recipeera}}}, an apparent failure in the analogy between the \(\zeta\)-function and random unitary polynomials is the Lindelöf Hypothesis:%
\begin{conjecture}{Conjecture}{The Lindelöf Hypothesis (LH).}{}{conjLH}%
If \(\varepsilon > 0\) then%
\begin{equation}
Z(t) = O_\varepsilon(t^\varepsilon)\label{conjLH-2-1-2}
\end{equation}
as \(t\to\infty\), or in other notation,%
\begin{equation}
Z(t) = e^{o(\log t)}\text{.}\label{conjLH-2-1-4}
\end{equation}
\end{conjecture}
The analogue for a unitary matrix \(A \in U(N)\) is%
\begin{equation}
\max_{|z|=1} {\mathcal Z}_A(z) = e^{o(N)}\text{.}\label{ssecLindel-4-2}
\end{equation}
That estimate need not hold: if all of the eigenvalues of \(A\) are in \(\Re z \ge 0\) then \(\abs{{\mathcal Z}_A(-1)} \ge \sqrt{2}^N\).%
\par
We suggest that when one is discussing LH (therefore implicitly not assuming RH because RH implies LH), the comparison should be with self-reciprocal polynomials \textemdash{} which are not necessarily unitary.  With that in mind, consider Backlund's \hyperlink{Bac}{[{\xreffont 7}]} equivalence (see \hyperlink{T}{[{\xreffont 110}]} Section 13.5). Let%
\begin{equation}
N(\sigma, T) = \#\{\rho=\beta + i \gamma\ :\ \zeta(\rho)=0,\ \beta \gt \sigma,\ 0 \lt t \le T\}\text{.}\label{ssecLindel-5-4}
\end{equation}
\begin{theorem}{Theorem}{Backlund [{\xreffont 7}].}{}{thmBac}%
The Lindelöf Hypothesis true if and only if%
\begin{equation}
N(\sigma, T+1) - N(\sigma, T) = o_\sigma(\log T)\label{NsigmaTdiff}
\end{equation}
for all \(\sigma > \frac12\).%
\end{theorem}
Backlund's equivalence \hyperref[NsigmaTdiff]{({\xreffont\ref{NsigmaTdiff}})} says that for any fixed \(A \gt 0\) and any fixed-width strip around the critical line, a negligible proportion of the zeros with \(T \le \gamma \le T+A\) lie outside that strip.%
\par
One of the implications in \hyperref[thmBac]{Theorem~{\xreffont\ref{thmBac}}} holds for self-reciprocal polynomials.  If \(P\) is a polynomial, let%
\begin{equation}
N(\delta, P) = \#\{ \rho = r e^{i \theta}\ : \ P(\rho) = 0,\ r \lt e^{-\delta} \}\text{.}\label{ssecLindel-8-3}
\end{equation}
\begin{proposition}{Proposition}{}{}{propBac}%
Suppose \(P\) is a self-reciprocal polynomial with%
\begin{equation}
\max_{\abs{z}=1} \abs{P(z)} = e^{o(\deg P)}\text{.}\label{LHpoly}
\end{equation}
Then%
\begin{equation}
N(\delta, P) = o_\delta(\deg P)\label{propBac-1-1-3}
\end{equation}
for all \(\delta > 0\).%
In other words, a negligible proportion of the zeros lie outside any fixed-width annulus around \(\abs{z}=1\).%
\end{proposition}
\begin{proof}{Proof}{}{propBac-3}
Suppose \(\delta > 0\) and factor \(P\) as%
\begin{equation}
P(z) = Q_\delta(z) Q^\delta(z)\label{Pjfactor}
\end{equation}
where the zeros of \(Q_\delta\) are the zeros of \(P\) in the annulus \(e^{-\delta} \lt \abs{z} \lt e^\delta\). Suppose the degree of \(Q^\delta\) is~\(2M\). The maximum of \(Q^\delta(z)\) on \(\abs{z}=1\) is decreased if the zeros of \(Q^\delta\) are moved to have equally spaced arguments and all have absolute value \(e^{-\delta}\) or \(e^{\delta}\).  Thus, it is sufficient to consider the case%
\begin{equation}
Q^\delta(z) = (z^M - e^{-\delta M}) (z^M - e^{\delta M})
= z^{2M} - (e^{-\delta M} + e^{\delta M}) z^M + 1\text{.}\label{Qdelta}
\end{equation}
By \hyperref[Qdelta]{({\xreffont\ref{Qdelta}})} we have  \(\abs{Q^\delta(z)} \ge e^{\delta M} - 3\) on all of \(\abs{z}=1\). Since \(Q_\delta(z)\) cannot be small on all of \(\abs{z}=1\) (because it is \(1\) on average), by \hyperref[LHpoly]{({\xreffont\ref{LHpoly}})} we have \(M = o(\deg P)\), as claimed.%
\end{proof}
The converse of \hyperref[propBac]{Proposition~{\xreffont\ref{propBac}}} is not true, and adapting the above proof fails because \(Q_\delta(z)\) in \hyperref[Pjfactor]{({\xreffont\ref{Pjfactor}})} could be large on \(\abs{z}=1\). Thus:%
\begin{principle}{Principle}{}{}{lindelof_two_ways}%
There are two different ways a self-reciprocal polynomial can be \emph{very large} on \(\abs{z} = 1\): there may be a large number of zeros far from the unit circle, or there may be a large gap (or other irregularity) in the arguments of the zeros.%
\par
For the \(\zeta\)-function, the second option does not occur, because the slow growth of \(S(t)\), equivalently the small error term in the zero counting function \(N(t)\), prevents extreme irregularities in the distribution of zeros.%
\end{principle}
By ``very large'' we mean a violation of (the polynomial analogue of)~LH.%
\par
In \hyperref[blanc]{Subsection~{\xreffont\ref{blanc}}} we will suggest that one of the claimed arguments against RH is actually an argument \emph{for} the possibility of improving bounds on the size of extreme gaps between zeros of the \(\zeta\)-function, based on bounds for the size of \(Z(t)\) and its derivatives.%
\end{subsectionptx}
\typeout{************************************************}
\typeout{Subsection 9.4 Carrier waves and mollifiers}
\typeout{************************************************}
\begin{subsectionptx}{Subsection}{Carrier waves and mollifiers}{}{Carrier waves and mollifiers}{}{}{ssecMollifiers}
In certain applications, variations in the size of the \(\zeta\)-function leads to inefficiencies.  This can be addresed with a \emph{mollifier}, which is an auxiliary function designed to dampen the wild behavior of the \(\zeta\)-function.  We use carrier waves to help understand why certain functions are effective as mollifiers.%
\par
A \terminology{mollifier} is a Dirichlet polynomial \(M(s)\) which is, in a sense we will make precise, an approximation to \(1/\zeta(s)\). In many applications, such as Levinson's method [\hyperlink{lev}{{\xreffont 80}}, \hyperlink{con25}{{\xreffont 29}}], one considers mean values like%
\begin{equation}
I(M, T) = \int_0^T |\zeta(\tfrac12 + i t) M(\tfrac12 + it)|^2 dt\label{ssecMollifiers-3-5}
\end{equation}
where \(M\) is a Dirichlet polynomial of length \(T^\theta\). One would like \(I(M, T) \sim c T\) for some \(c \gt 0\). If \(M = 1\) then \(I(1, T) \sim c T \log T\), so we see that a mollifier must be small where the \(\zeta\)-function is large. That is, \(M(s)\) ``mollifies'' the wild behavior of the \(\zeta\)-function.%
\par
It is clear that an approximation to \(1/\zeta(s)\) is a good candidate for a mollifier.  We will use the idea of carrier waves to motivate the use of more complicated mollifiers. These will be based on (approximations to the reciprocal of) linear combinations of the \(\zeta\)-function and its derivatives.%
\par
We rephrase the main idea of carrier waves, see \hyperref[localconstant]{Subsection~{\xreffont\ref{localconstant}}}. For most \(X\), if \(t\) is small we have%
\begin{equation}
\zeta(X + t) \approx e^{A(X)} P_X\Bigl(t \,\frac{\log X}{2\pi}\Bigr)\text{.}\label{ssecMollifiers-5-4}
\end{equation}
Here \(A(X)\) is normally distributed and changes slowly, \(P_X(x)\) is a polynomial with zeros that are spaced 1~apart on average, and the distribution of \(P_X\) changes slowly with~\(X\). In particular, \(A(X)\) is responsible for the size of the \(\zeta\)-function, and it is the reason \(I(1, T)\) is of size \(T \log T\).  Thus, the mollifier must counteract the \(e^{A(X)}\) factor, with \(P_X\) and the zeros from \(P_X\) playing a secondary role.%
\par
Because \(A(X)\) changes slowly and the distribution of \(P_X\) also changes slowly, we have that the \(k\)th derivative of the \(\zeta\)-function is the same size as the \(\zeta\)-function, times a factor of \(\log^{k} X\):%
\begin{equation}
\zeta^{(k)}(X + t) \approx e^{A(X)}
\Bigl(\frac{\log X}{2\pi} \Bigr)^k
P^{(k)}_X\Bigl(t \,\frac{\log X}{2\pi}\Bigr)\text{.}\label{ssecMollifiers-6-7}
\end{equation}
Thus, for any real numbers \(a_0,\ldots,a_K\), and with \(L=\log T / 2\pi \), a Dirichlet polynomial approximation to the reciprocal of%
\begin{equation}
\sum_{k=0}^K a_k \frac{1}{L^k} \zeta^{(k)}(s)\label{lincombzertdir}
\end{equation}
is a viable mollifier for the \(\zeta\)-function, and it is also a viable mollifier for any expression of the form~\hyperref[lincombzertdir]{({\xreffont\ref{lincombzertdir}})}. This idea lies behind various improvements to Levinson's method [\hyperlink{feng}{{\xreffont 54}}, \hyperlink{prattetal}{{\xreffont 90}}].%
\end{subsectionptx}
\end{sectionptx}
\typeout{************************************************}
\typeout{Section 10 The primes}
\typeout{************************************************}
\begin{sectionptx}{Section}{The primes}{}{The primes}{}{}{primes}
\begin{introduction}{}%
When describing the relationship between different objects, it is common to use anthropomorphic language, or to speak in terms of cause and effect.  For example, one of the principles below is ``the carrier wave arises from the prime numbers''. The intent is not to describe a causal relationship, but rather to provide intuition \textemdash{} intuition which possibly may eventually lead one to new insight or a new proof.%
\par
A previous principle asserted that the carrier wave arises from fluctuations in the density of zeros. Have we just contradicted ourself, and if so, which assertion is ``correct''?%
\par
Both principles are valid because both help provide intuition for a mental model of the behavior of the zeta-function. Equalities like%
\begin{equation}
\zeta(s) = \prod_p (1 - p^{-s})^{-1}
= \frac{e^{(\log(2 \pi) - 1 - \frac{\gamma}{2})s}}{2(s-1)\Gamma(1+\frac{s}{2})}
\prod_\rho (1 - s/\rho) e^{s/\rho}\text{,}\label{primes-2-3-1}
\end{equation}
where the first product is over the primes and the second is over the zeros, suggest that it is not meaningful to ask whether the primes cause the zeros or the zeros cause the primes. But if one has knowledge, or intuition, about one of those objects, then it is reasonable to ask what that suggests about the other.  This is where analogies between different areas help further the subject.  In the previous parts of this paper we primarily used knowledge and conjectures about the zeros as a starting point to understand the \(\zeta\)-function. In this section and in \hyperref[multiscale]{Section~{\xreffont\ref{multiscale}}} we take the primes as the starting point.%
\end{introduction}%
\typeout{************************************************}
\typeout{Subsection 10.1 Where are the primes?}
\typeout{************************************************}
\begin{subsectionptx}{Subsection}{Where are the primes?}{}{Where are the primes?}{}{}{primes-3}
The Riemann \(\zeta\)-function and its zeros were introduced because of their connection to the prime numbers, yet the previous several sections of this paper had little mention of the primes.%
\par
So, where are the primes?%
\begin{principle}{Principle}{}{}{prin_carrier_primes}%
The carrier wave arises from the prime numbers.%
\end{principle}
That principle does not contradict \hyperref[prin_density]{Principle~{\xreffont\ref{prin_density}}}, which says that the carrier wave arises from the relative density of the zeros.  Both principles have explanatory power. Which one is more illuminating will depend on the particular question at hand.%
\par
We partially justify \hyperref[prin_carrier_primes]{Principle~{\xreffont\ref{prin_carrier_primes}}} by Selberg's CLT \hyperref[selberg_gaussian]{({\xreffont\ref{selberg_gaussian}})} that \(\log \abs{\zeta(\frac12 + it)}\) has a Gaussian distribution. The starting point of the proof is to write%
\begin{equation}
\log \abs{\zeta(s)} = \sum_{\text{small primes } p} \Re\frac{1}{p^s}
+ \sum_{\text{zeros } \rho \text{  near } s} \log \abs{s-\rho} + \text{error terms}\text{.}\label{eqn_logzeta}
\end{equation}
The hard work is showing that, most of the time, the sum over zeros and the error terms are of size \(O(1)\). Thus, only the primes contribute to the size of the \(\zeta\)-function. One should expect a central limit theorem to hold for the sum over primes, because the \(\log p\) are linearly independent over the rationals. Thus, the variance when \(\Re(s) = \frac12\) should be \(\sum_{p\le x} p^{-1} \sim \log\log x\), which usually is larger than the size of the error terms.%
\par
Selberg's CLT is not an adequate justification for \hyperref[prin_carrier_primes]{Principle~{\xreffont\ref{prin_carrier_primes}}}, because by definition the carrier wave stays large over the span of several zeros. That does not immediately follow from the proof sketched above. In \hyperref[multiscale]{Section~{\xreffont\ref{multiscale}}} we describe more recent work which extracts additional information about the randomness of the \(\zeta\)-function from the primes.%
\par
\hyperref[prin_three_things]{Principle~{\xreffont\ref{prin_three_things}}} described the behavior of \(Z(t)\) as arising from a combination of local and global factors. We rephrase that in light of \hyperref[prin_carrier_primes]{Principle~{\xreffont\ref{prin_carrier_primes}}} (also see \hyperref[spectralrigidity]{Subsection~{\xreffont\ref{spectralrigidity}}}).%
\begin{principle}{Principle}{}{}{short-long}%
The short range behavior of the zeros is universal and follows random matrix statistics.  The long range behavior of the zeros is dictated by the prime numbers.%
\end{principle}
In the next subsection we quantify what is meant by ``short range'' and ``long range'' above.%
\end{subsectionptx}
\typeout{************************************************}
\typeout{Subsection 10.2 The hybrid model}
\typeout{************************************************}
\begin{subsectionptx}{Subsection}{The hybrid model}{}{The hybrid model}{}{}{sec_hybrid}
The \(\zeta\)-function can be expressed in terms of only zeros, as in the Hadamard product:%
\begin{equation}
\zeta(s) = \frac{e^{(\log 2\pi - \frac{\gamma}{2}-1)s}}{2 (s-1)\Gamma(1+\frac{s}{2})}
\prod_\rho \biggl(1-\frac{s}{\rho}\biggr) e^{\frac{s}{\rho}}\text{.}\label{sec_hybrid-2-2}
\end{equation}
And it can be expressed in terms of only primes:%
\begin{equation}
\zeta(s) = \exp\left(\sum_n \frac{\Lambda(n)}{n^s\log n}\right),
\ \ \ \ \ \ \sigma \gt 1\text{,}\label{sec_hybrid-2-3}
\end{equation}
where \(\Lambda(n)\) is the von-Mangoldt function, which is supported on prime powers.%
\par
Here we describe the \terminology{hybrid model}, which involves both zeros and primes, where one can adjust the relative contribution of each. Gonek, Hughes, and Keating~\hyperlink{hybrid}{[{\xreffont 59}]} proved that if \(s=\sigma+ i t\), with \(0\leq\sigma\leq 1\) and \(|t|\geq 2\), then for \(X>2\) and \(K\) any positive integer,%
\begin{equation}
\zeta(s) = P_X(s) Z_X(s) \left(1+O\left(\frac{X^{2-\sigma+K}}{(|t|
\log X)^{K}}\right)+O(X^{-\sigma}\log X)\right)\text{,}\label{eq_zetaPZ}
\end{equation}
where%
\begin{equation}
P_X(s) := \exp\left(\sum_{n\leq X}\frac{\Lambda(n)}{n^{s}\log n}
\right)\label{eq_P_X}
\end{equation}
and%
\begin{equation}
Z_X(s):= \exp\left(-\sum_{\rho}U((s-\rho)\log X)\right)\text{.}\label{eq_Z_X}
\end{equation}
Here the \(\rho\) are non-trivial zeros of \(\zeta(s)\) and \(U(z) = \int_0^\infty u(x) E_1(z\log x)\; d x\), where \(E_1(z) = \int_{z}^{\infty} \frac{e^{-w}}{w}\;d w\) is the exponential integral and \(u\) is any smooth function supported in \([e^{1-1/X}, e]\).%
\par
The parameter \(X\) controls the relative influence of the primes and the zeros. If \(X\) is large, there are many primes in \(P_X(s)\), and only the zeros very close to \(s\) are relevant to \(Z_X(s)\). If \(X\) is small, the zeros further away from \(s\) also make a contribution to \(Z_X(s)\), but the number of primes in \(P_X(s)\) is diminished.%
\par
One might expect  \(Z_X\) and \(P_X\) to behave somewhat independently, and there are two bits of evidence to support that idea. First, Gonek, Hughes, and Keating~\hyperlink{hybrid}{[{\xreffont 59}]} propose the ``splitting conjecture'': if \(X\to\infty\) with \(X = O(\log^{2-\epsilon} T)\) then the \(2k\)th moment of \(\zeta(\frac12 + i t)\) splits as the product of the \(2k\)th moment of \(P_X(\frac12 + i t)\) and \(Z_X(\frac12 + i t)\) separately. They rigorously compute the moments of \(P_X(\frac12 + i t)\), obtaining the expected arithmetic factor.%
\par
Second:  Farmer,  Gonek, and Hughes \hyperlink{FGH}{[{\xreffont 49}]} calculate the (conjectural) extreme value of the product \(P_X(\frac12 + i t)Z_X(\frac12 + i t)\) for \(T \le t \le 2T\), using a random model for each piece separately. They find that, for a wide range of \(X\), the expected maximum of the product is independent of~\(X\), even though the maximum of each piece separately depends on both \(X\) and~\(T\).%
\par
Both of the above results concern the leading order behavior, and there seems to be no reason to believe that any type of random model for the \(\zeta\)-function can accurately predict lower order behavior. However, recent work of Sawin~\hyperlink{saw}{[{\xreffont 102}]} concerns a random matrix model in the function field case, in which the lower order terms of the moments have a structure similar to the conjectured moments of the \(\zeta\)-function.%
\end{subsectionptx}
\end{sectionptx}
\typeout{************************************************}
\typeout{Section 11 Randomness not involving the zeros}
\typeout{************************************************}
\begin{sectionptx}{Section}{Randomness not involving the zeros}{}{Randomness not involving the zeros}{}{}{multiscale}
\begin{introduction}{}%
In this section we briefly describe how expressions like%
\begin{equation}
\log \zeta(\tfrac12 + i t) \ \ \ \leftrightarrow \ \ \ \sum_p \frac{1}{\sqrt{\mathstrut p}} \, p^{-i t}\label{primesum}
\end{equation}
can be used to understand the statistical behavior of the \(\zeta\)-function on the critical line. To a traditional analytic number theorist, quantities like \hyperref[primesum]{({\xreffont\ref{primesum}})} hurt the eyes and churn the stomach. But to a physicist or a probabilist, such expressions are a meaningful starting point for establishing interesting results.%
\par
It has been noted that characteristic polynomials do not know about the primes.  However, in the characteristic polynomial world there is an analogous expression \hyperlink{HKO}{[{\xreffont 69}]} to \hyperref[primesum]{({\xreffont\ref{primesum}})} for the characteristic polynomial of a matrix \(A\in U(N)\):%
\begin{equation}
\log \Lambda_A(e^{i \theta}) = \sum_n \frac{1}{\sqrt{\mathstrut n}} \, \frac{\Tr A^n}{\sqrt{\mathstrut n}} e^{i n\theta}\text{.}\label{multiscale-2-2-4}
\end{equation}
It is a theorem \hyperlink{DiaSha}{[{\xreffont 40}]} that for \(1\le n \le k\), with \(k\) fixed, as \(N\to\infty\) the \(\Tr A^n/ \sqrt{\mathstrut n}\) are i.i.d Gaussian.%
\par
In both worlds we have a weighted sum of i.i.d. random variables, so we would like to know their variance.  For the \(L\)-functions, the variance involves \(\sum_p p^{-1}\), which diverges like \(\log\log T\). For the characteristic polynomials we have \(\sum_n n^{-1}\), which diverges like \(\log N\). \hyperref[KSlaw]{The Keating-Snaith Law} \(N=\log T\) is a clue that we should expect similar types of randomness in the two worlds.%
\par
The perspective we take here is a metric space~\(X\), a circle or line our context, with each \(x\in X\) having an associated random (generalized) function \(F_x\). In other words, a random field. Of particular concern will be the relationship between the random functions at different points. If the covariance satisfies%
\begin{equation}
\langle F_x, F_{x'} \rangle = - \log\abs{x - x'} + g(x,x')\label{multiscale-2-4-5}
\end{equation}
where \(g\) is smooth, we say that the field is \terminology{log-correlated}.%
\par
To get into the spirit of this section, we encourage the reader to use \hyperref[primesum]{({\xreffont\ref{primesum}})} to heuristically manipulate \(\log \zeta(\frac12 + i t) \log\zeta(\frac12 - i s)\). Keep only the diagonal terms, and recognize the resulting sum as \(\log \zeta(1 + i (t-s))\).  Thus, in some average sense, \(\log \zeta(\frac12 + i t) \log\zeta(\frac12 - i s) \approx -\log\abs{t-s}\), so we conclude that the \(\zeta\)-function is log-correlated.  That heuristic was adapted from the Appendix in~\hyperlink{FyKe}{[{\xreffont 56}]}.%
\begin{paragraphs}{Log-correlation and physics.}{multiscale-2-6}%
The discussions throughout this section concern mathematical theorems, but just as important are intuitions from physics concerning log-correlated fields. The author thanks an anonymous referee for sharing this perspective.%
\par
First is that in a log-correlated field, maxima tend to occur in clusters. See \hyperlink{RhVa}{[{\xreffont 93}]}, in particular Figure~1, for an illustration. This is relevant to the size of carrier waves: the size of the clusters should be around \(\log\log N\), a conclusion we will reach for carrier waves by round-about means in \hyperref[how_wide]{Subsection~{\xreffont\ref{how_wide}}}. It is also relevant to the most extreme values, which are somewhat smaller than one might guess from a naive treatment of the value distribution. Since the values are highly correlated, it takes more energy to create large values because nearby values must also be large.%
\par
Second, in a log-correlated field the clusters form ``cascades'', which we will describe as ``valleys within valleys'' in \hyperref[smallwave]{Subsection~{\xreffont\ref{smallwave}}}.%
\end{paragraphs}%
\par\medskip
For details about the topics below, see [\hyperlink{BaKe}{{\xreffont 10}}, \hyperlink{SaksWebb}{{\xreffont 100}}, \hyperlink{SaksWebbSurv}{{\xreffont 101}}, \hyperlink{FyKe}{{\xreffont 56}}, \hyperlink{RhVa}{{\xreffont 93}}].%
\end{introduction}%
\typeout{************************************************}
\typeout{Subsection 11.1 Log-correlation and short-range maximum values}
\typeout{************************************************}
\begin{subsectionptx}{Subsection}{Log-correlation and short-range maximum values}{}{Log-correlation and short-range maximum values}{}{}{long_range}
Fyodorov, Hiary, and Keating \hyperlink{FHK}{[{\xreffont 58}]}, and Fyodorov and Keating \hyperlink{FyKe}{[{\xreffont 56}]}, considered the maximum values of the \(\zeta\)-function, not the global maxima, but over intervals of bounded length:%
\begin{equation}
\zeta_{\max} (L; T) = \max_{T\le t \le T+L} \log \abs{\zeta(\tfrac12 + it)}\text{.}\label{long_range-2-4}
\end{equation}
For comparison with characteristic polynomials, the interesting case is \(L = 2\pi\). What they found is that, as expected from the log-correlation of \(\zeta(\tfrac12 + it)\), the maximum values are slightly smaller than one might expect. The FHK conjecture \hyperlink{FHK}{[{\xreffont 58}]} is%
\begin{equation}
\zeta_{\max} (2\pi; T) \sim \log\log (T/2\pi) - C \log\log\log(T/2\pi) + X_T\text{,}\label{FHKconjecture}
\end{equation}
where%
\begin{itemize}[label=\textbullet]
\item{}\(C = \frac34\),%
\item{}\(X_T\) is a random variable which has a limiting distribution, \(X\), as \(T\to\infty\), and%
\item{}The PDF of \(X\) decays like \(x e^{-x}\) as \(x\to \infty\).%
\end{itemize}
Upper and lower bounds of size \hyperref[FHKconjecture]{({\xreffont\ref{FHKconjecture}})} are established in~[\hyperlink{ABR1}{{\xreffont 3}}, \hyperlink{Harper1}{{\xreffont 64}}] and~\hyperlink{ABR2}{[{\xreffont 4}]}, respectively.%
\par
Note that \hyperref[FHKconjecture]{({\xreffont\ref{FHKconjecture}})} is more than just a conjecture for the expected maximum value on an interval: it is a conjecture for the distribution of the maxima on such intervals.%
\par
The FHK conjecture relates to several areas of mathematical physics, and has an analogous statement for the \(\CUE(N)\) under the usual identification \(N=\log(T/2\pi)\). See Section~8 of \hyperlink{soundICM}{[{\xreffont 106}]} and Section~2 of \hyperlink{BaKe}{[{\xreffont 10}]} and references therein. We limit our discussion to the relationship between the FHK conjecture \hyperref[FHKconjecture]{({\xreffont\ref{FHKconjecture}})} and carrier waves.%
\begin{paragraphs}{Why the FHK conjecture is surprising.}{long_range-5}%
Much of the discussion below follows Section~8 of Soundararajan~\hyperlink{soundICM}{[{\xreffont 106}]}.%
\par
Consider the random matrix analogue of \(\zeta_{\max} (2\pi; T)\):%
\begin{equation}
\mathcal Z_{\max}(U) = \max_{\theta\in [0,2\pi]} \log \abs{\mathcal Z_U(e^{i\theta})}\text{.}\label{long_range-5-3-2}
\end{equation}
We will view \(\mathcal Z_{\max}(U)\) as arising from \(J\) samples of the random variable \(\log \abs{\mathcal Z_U(e^{i\theta})}\), for some sample size \(J\). If those samples were independent, and since \(\log\abs{Z_U(e^{i\theta})}\) has a Gaussian distribution with variance \(\sqrt{\frac12 \log N}\) (which is Keating and Snaith's analogue of Selberg's CLT), then \(\mathcal Z_{\max}(U)\) should typically be around%
\begin{equation}
\sqrt{\tfrac12 \log N} \, \bigl(\sqrt{2\log J} - \log \sqrt{4\pi \log J}/\sqrt{2\log J}\bigr)\text{.}\label{Jsamples}
\end{equation}
It remains to decide on the number of samples~\(J\). Clearly we cannot choose \(J \gt N\) because \(\mathcal Z_U\) is determined by the \(N\) eigenvalues of~\(U\).  If we choose \(J=N\), a seemingly reasonable choice because \(\abs{\mathcal Z_U(e^{i\theta})}\) has \(N\) local maxima and we want to know the largest, we find that \hyperref[Jsamples]{({\xreffont\ref{Jsamples}})} equals%
\begin{equation}
\log N - \tfrac14 \log\log N\text{.}\label{long_range-5-3-22}
\end{equation}
Setting \(N=\log T/2\pi\) we contradict the \(C=\frac34\) part of conjecture \hyperref[FHKconjecture]{({\xreffont\ref{FHKconjecture}})}.  That is the first way in which the FHK conjecture is surprising. In particular, their conjecture says that the short range maximum values tend to be slightly smaller than one would expect from a naive view of Selberg's CLT.%
\par
The second surprising aspect is the size of the tail of the random variable~\(X\). The maximum among independent choices from a Gaussian follow the \terminology{Gumbel distribution}, which decays like \(e^{-x}\), whereas the FHK conjecture posits decay like \(xe^{-x}\).%
\par
The conclusion is that \(N\) choices from \(\abs{Z_U(e^{i\theta})}\) are not independent. That is another aspect of the carrier wave, which we try to make more explicit in \hyperref[how_wide]{Subsection~{\xreffont\ref{how_wide}}}.%
\end{paragraphs}%
\begin{paragraphs}{Non-implications for extreme values.}{long_range-6}%
It is tempting to use the FHK conjecture \hyperref[FHKconjecture]{({\xreffont\ref{FHKconjecture}})} as a step toward a conjecture for the extreme values, by invoking \hyperref[eNprin]{Principle~{\xreffont\ref{eNprin}}} and using the conjectured tail \(x e^{-x}\) for \(X\). That would predict much larger values for the \(\zeta\)-function than suggested in \hyperlink{FGH}{[{\xreffont 49}]}, but that reasoning is flawed because of an implicit switching of limits. The limiting distribution of \(X_T\) as \(T\to\infty\) does decrease like \(x e^{-x}\), but the distribution of \(X_T\) has an additional factor \(e^{-x^2/\log\log T}\).  See~\hyperlink{ADH}{[{\xreffont 5}]}. That factor is significant when the sample size is large.%
\end{paragraphs}%
\end{subsectionptx}
\typeout{************************************************}
\typeout{Subsection 11.2 How wide is the carrier wave?}
\typeout{************************************************}
\begin{subsectionptx}{Subsection}{How wide is the carrier wave?}{}{How wide is the carrier wave?}{}{}{how_wide}
Our discussion of carrier waves has not indicated their characteristic scale. That is, over what span of zeros does the logarithm of the function typically change very little? Four sources of information about that question have been mentioned. We will phrase things in terms of \(M/\log T\), where \(M\) is the typical number of zeros over which the size of \(\log\abs{\zeta(\frac12 + i t)}\) does not change.%
\par
Montgomery's original speculations suggested that \(M\) might be as large as \(\exp(\delta_T {\log\log T})\), for some function \(\delta_T \to 0\). The material below suggests that \(\delta_T\) might need to decrease at least as quickly as \(\log\log\log T/\log\log T\).%
\par
Bombieri and Hejhal \hyperlink{BomHej}{[{\xreffont 24}]} show that \(M\) can be taken to be any fixed number.  That was sufficient for their application. It may be that a detailed examination of their proof allows \(M\) as large as \(\log\log(T)^\kappa\) for any \(\kappa \lt \frac14\).%
\par
We now show that the FHK conjecture \hyperref[FHKconjecture]{({\xreffont\ref{FHKconjecture}})} suggests \(M=\log\log T\).  We will invoke \hyperref[Jsamples]{({\xreffont\ref{Jsamples}})}. Suppose \(Y\) is the span over which the (logarithm of) the function typically changes very little, measured on the scale of the average zero spacing. Independent choices from \(\mathcal Z_U(e^{i\theta})\) must have \(\theta\) separated by \(Y/N\), to cover the circle with \(J=N/Y\) independent choices. Making that substitution in \hyperref[Jsamples]{({\xreffont\ref{Jsamples}})} gives main terms%
\begin{equation}
\log N - \frac{2 \log Y + \log\log N}{4}\text{.}\label{how_wide-5-10}
\end{equation}
Equating to \hyperref[FHKconjecture]{({\xreffont\ref{FHKconjecture}})} with \(C=\frac34\) and \(N=\log(T/2\pi)\) we find \(Y=\log\log(T/2\pi)\), as claimed.%
\par
Shifted moments can provide information about how quickly the size of the \(\zeta\)-function can change. For fixed \(a, b \gt 0\) consider%
\begin{equation}
I_+(a, b; T) := \int_2^T \zeta\bigl(\tfrac12 + \tfrac{a}{\log T} + it\bigr)
\overline{\zeta\bigl(\tfrac12 + \tfrac{a}{\log T} +  \tfrac{i b}{\log T} + it\bigr)}
dt\text{.}\label{Iplus}
\end{equation}
By a theorem of Ingham \hyperlink{Ing}{[{\xreffont 71}]},%
\begin{equation}
I_+(a, b; T) \sim \frac{1 - e^{-2a-i b}}{2a + i b} T\log T\text{.}\label{how_wide-6-5}
\end{equation}
If the size of \(\zeta(\frac12 + i t)\) were typically the same as \(\zeta(\frac12 +  ib/\log t + it)\), then the size of \(I_+(a, b; T)\) would not depend on \(b\).  But it does depend on \(b\), and in fact if \(b\) grows with \(T\) then the size of the integral changes.%
\par
Does this suggest that the scale of the carrier wave, \(M\), cannot grow with~\(T\)?  That is not a valid conclusion, because the carrier wave concerns the bulk of the distribution, and moments are sensitive to rare large values.  Indeed, points where the \(\zeta(\frac12 + i t)\) is of size at most \(\log\log(t)^A\), which happens \(100\)\% of the time, do not contribute to the main term of \(I_+(a, b; T)\) when \(a\) and \(b\) are bounded. Thus, a fixed set of moments, even if they are shifted moments, cannot tell us about the scale of the carrier wave.%
\end{subsectionptx}
\typeout{************************************************}
\typeout{Subsection 11.3 Small values of the wave, and valleys within valleys...}
\typeout{************************************************}
\begin{subsectionptx}{Subsection}{Small values of the wave, and valleys within valleys...}{}{Small values of the wave, and valleys within valleys...}{}{}{smallwave}
It is easy to lose sight of the fact that the carrier wave also concerns very small values, which occupy just as much space as the large values.  Modify \hyperref[Iplus]{({\xreffont\ref{Iplus}})}, but to be sensitive to small values:%
\begin{equation}
I_-(a, b; T) := \int_2^T \zeta\bigl(\tfrac12 + \tfrac{a}{\log T} + it\bigr)^{-1}
\overline{\zeta\bigl(\tfrac12 + \tfrac{a}{\log T}  + \tfrac{i b}{\log T} + it\bigr)^{-1}}
dt\text{.}\label{Iminus}
\end{equation}
By the ratios conjecture~\hyperlink{CFZ}{[{\xreffont 32}]},%
\begin{equation}
I_-(a, b; T) \sim  \frac{15}{\pi^2} \frac{1}{2a+ib} T\log T\text{.}\label{smallwave-2-5}
\end{equation}
For the analogous conjecture for an arbitrary primitive \(L\)-function \(L(s)\), replace the constant \(15/\pi^2\) by \(\sum \abs{\mu_L(n)}^2/n^2\), where \(L(s)^{-1} = \sum \mu_L(n) n^{-s}\).%
\par
The same arguments as for \(I_+\) apply here: the typical small values are not relevant to the size of \(I_-\), and the atypically small values do not persist over the span of many zeros. Indeed, it may seem initially surprising that the shapes of \(I_+\) and \(I_-\) are so similar. It is not clear what this suggests about the bias in the value distribution at low heights, which is skewed toward small values as illustrated in \hyperref[fig_nongaussian]{Figure~{\xreffont\ref{fig_nongaussian}}}.%
\par
It would be interesting to adapt the approach of FHK, or use some other method, to model the small values of \(\log\abs{\zeta(\frac12 + it)}\) on an interval.  The zeros prevent considering that question literally, so some modification would be needed.  Considering values at local maxima probably runs afoul of small maxima from occasional small zero gaps. Averaging over a small interval, a few widths of the average zero gaps, should capture the wave but may be difficult to handle analytically. Or perhaps moving away from the critical line, on a scale comparable to the average zero gap as in~\hyperref[Iminus]{({\xreffont\ref{Iminus}})}, is the right way to think about it.  See \hyperlink{BuFl}{[{\xreffont 28}]} for recent work in this direction.%
\par
The example carrier waves in this paper were created by generating a large random matrix and then finding all of its eigenvalues. That enabled us to exhibit behavior far beyond what can be computed for the \(\zeta\)-function, but it limits our ability to produce  very large examples.  To quote Fyodorov and Keating \hyperlink{FyKe}{[{\xreffont 56}]} (slightly out of context), the carrier wave should exhibit a  hierarchical structure of ``valleys within valleys within valleys''. It would be interesting to get a glimpse of this structure, which would require directly generating example carrier waves corresponding to much larger random matrices. We mention this again at the end of \hyperref[gramslawrevisited]{Subsection~{\xreffont\ref{gramslawrevisited}}}. It is not clear whether on such a large scale there would be qualitative differences between the \(L\)-function and the characteristic polynomial worlds.%
\end{subsectionptx}
\typeout{************************************************}
\typeout{Subsection 11.4 Log correlation and moments of moments}
\typeout{************************************************}
\begin{subsectionptx}{Subsection}{Log correlation and moments of moments}{}{Log correlation and moments of moments}{}{}{momentsofmoments}
This short section is primarily a pointer to work on ``moments of moments''.  It the random matrix world, this refers to expressions like%
\begin{equation}
\int_{A\in U(N)}
\biggl(
\int_0^{2\pi} |\Lambda_A(e^{i\theta})|^{2\beta} \,d\theta
\biggr)^k dA\text{,}\label{momentsofmoments-2-2}
\end{equation}
and in the \(L\)-function world%
\begin{equation}
\int_{T}^{2T}
\biggl(
\int_0^{1} |\zeta(\tfrac12 + i t + i h)|^{2\beta} \,dh
\biggr)^k dt\text{.}\label{momentsofmoments-2-4}
\end{equation}
\par
The log-correlation manifests itself via a ``freezing transition'' where the behavior of those expressions changes when \(k\beta^2 \gt 1\), and by extra cancellation when \(\beta = 1\) and \(k \lt 1\). See [\hyperlink{BK1}{{\xreffont 8}}, \hyperlink{BK2}{{\xreffont 9}}, \hyperlink{Cur}{{\xreffont 39}}, \hyperlink{Harper1}{{\xreffont 64}}].%
\end{subsectionptx}
\typeout{************************************************}
\typeout{Subsection 11.5 Gaussian multiplicative chaos}
\typeout{************************************************}
\begin{subsectionptx}{Subsection}{Gaussian multiplicative chaos}{}{Gaussian multiplicative chaos}{}{}{gmc}
We have seen that \(\log \zeta(\frac12 + i t)\) is log-correlated. And even better: by Selberg's CLT it is (to leading order) Gaussian, so we have a Gaussian log-correlated field. Its (suitably normalized) exponential is an example of \terminology{Gaussian multiplicative chaos}, GMC, a topic which has received considerable attention due to its connection to turbulence, mathematical finance, quantum gravity, and elsewhere. Here we briefly describe the work of Saksman and Webb; see [\hyperlink{SaksWebb}{{\xreffont 100}}, \hyperlink{SaksWebbSurv}{{\xreffont 101}}] for details.%
\par
We have been viewing \(\zeta(\frac12 + it)\) as random by choosing \(t \in [T, 2T]\) uniformly.  That approach treats the \emph{values} of the \(\zeta\)-function as random.  Now we want to generate a random \emph{function} from the \(\zeta\)-function.%
\par
Let \(\omega \in [0,1]\) be uniformly distributed, and consider the random function%
\begin{equation}
\mu_T(x) := \zeta(\tfrac12 + i x + i \omega T)
\ \ \ \ \ \text{for}\ \ \ \ \ \ 
x\in (0, 1)\text{.}\label{gmc-4-2}
\end{equation}
The goal is to understand the limiting statistics of \(\mu_T\) as \(T\to\infty\).%
\par
The main result of \hyperlink{SaksWebb}{[{\xreffont 100}]} is that as \(T\to\infty\) the random function \(\mu_T\) converges to the \terminology{randomized zeta function} \(\zeta_{\text{rand}}(\frac12 + i x)\), where%
\begin{equation}
\zeta_{\text{rand}}(s) = \prod_p (1 - p^{-s} e^{2 \pi i \theta_p})^{-1}\label{gmc-5-6}
\end{equation}
with the \(\theta_p\) i.i.d. uniform on \((0, 1)\). Note that \(\zeta_{\text{rand}}\) is a generalized function. The convergence of \(\mu_T(x)\) to \(\zeta_{\text{rand}}(\frac12 + i x)\) is convergence in law with respect to the strong topology of the Sobolev space \(W^{-\alpha, 2}(0, 1)\) for any \(\alpha > \frac12\). Furthermore, we have%
\begin{equation}
\zeta_{\text{rand}}(\tfrac12 + i x) = g(x) v(x)\label{gmc-5-14}
\end{equation}
where \(v\) is a Gaussian multiplicative chaos distribution and \(g\) is a random smooth function which almost surely has no zeros. (There are additional conditions on \(v\) and \(g\), see [\hyperlink{SaksWebb}{{\xreffont 100}}, \hyperlink{SaksWebbSurv}{{\xreffont 101}}].) A similar factorization holds for random characteristic polynomials, where the GMC factor is the same but the smooth part is slightly different. In both Selberg's CLT and the Keating-Snaith analogue for characteristic polynomials, the value distribution is Gaussian, but only to leading order. Similarly, \(\zeta_{\text{rand}}\) is a perturbation of a GMC object.%
\par
Since the randomized zeta function is a generalized function, and not an actual function or a measure, it is not clear how to visualize it or how to have it shed light on the ``valleys within valleys'' mentioned above.%
\end{subsectionptx}
\end{sectionptx}
\typeout{************************************************}
\typeout{Section 12 The claimed reasons to doubt RH: properties of \(Z(t)\)}
\typeout{************************************************}
\begin{sectionptx}{Section}{The claimed reasons to doubt RH: properties of \(Z(t)\)}{}{The claimed reasons to doubt RH: properties of \(Z(t)\)}{}{}{rhissues_Z}
\begin{introduction}{}%
~Several of the claimed reasons to doubt RH involve analytic properties of \(Z(t)\).  We address those reasons first, calling upon the Principles from the previous sections in this paper.%
\par
We have described how the analogy between the \(\zeta\)-function and the characteristic polynomials of random unitary matrices has provided insight to the behavior of the \(\zeta\)-function. We now call on a fact which has been implicit in our previous discussions and now will play a decisive role.%
\begin{quote}%
RH is true for characteristic polynomials of random unitary matrices.%
\end{quote}
That is, the zeros of unitary polynomials lie on the unit circle.%
\par
If we are confronted with information about the \(\zeta\)-function, and the analogous information also applies to characteristic polynomials, then what can we conclude?  Certainly we cannot conclude that RH may be false, because that conclusion would also apply to unitary polynomials, contradicting the fact that the analogue of RH is true. Similarly, every numerical calculation of the \(\zeta\)-function has found all zeros on the critical line, therefore any information arising from explicit computations cannot throw doubt on RH. We summarize those observations:%
\begin{principle}{Principle}{}{}{prin_minimal_unitary}%
Any fact which directly translates to a statement about unitary polynomials cannot be used as evidence against RH.%
\end{principle}
\begin{principle}{Principle}{}{}{prin_minimal_computation}%
Any fact arising from numerical computations of the \(\zeta\)-function, except for an actual counterexample, cannot be used as evidence against RH.%
\end{principle}
We will see that these principles, coupled with material from earlier sections of this paper, immediately refute some of the arguments against RH. However, it is important to keep in mind that those arguments against RH were reasonable at the time they were made: it is only subsequent discoveries which have revealed new ideas which allow these principles to be applied.%
\end{introduction}%
\typeout{************************************************}
\typeout{Subsection 12.1 Reason 1: Lehmer's phenomenon}
\typeout{************************************************}
\begin{subsectionptx}{Subsection}{Reason 1: Lehmer's phenomenon}{}{Reason 1: Lehmer's phenomenon}{}{}{rhissues_Z-3}
This reason for doubting RH comes from Section~2 of \hyperlink{Iv2}{[{\xreffont 73}]}.%
\par
Computing zeros has been a fundamental part of exploring the \(\zeta\)-function, going back to Riemann who calculated approximations to the first few zeros.  When D.H.~Lehmer computed the first \(15,000\) zeros in 1956, he found that \(\tilde{\gamma}_{6710} - \tilde{\gamma}_{6709} \lt 0.054\). Those zeros are unusually close, in a sense which can be made precise. Such zeros are now called a \terminology{Lehmer pair}.%
\par
The existence of Lehmer pairs is a claimed reason to doubt RH. The justification is that if a real holomorphic function \(f(t)\) has two real zeros very close together, then \(f(t) + \varepsilon\) will have a pair of complex zeros for some small~\(\varepsilon\). For example, near the Lehmer pair noted above, \(Z(t)-0.004\) has a pair of complex zeros. So, in a sense, the Lehmer pairs suggest:%
\begin{principle}{Principle}{}{}{barelytrue}%
If the Riemann Hypothesis is true, it is ``barely true''.%
\end{principle}
The claimed argument is:  Lehmer pairs indicate that RH is barely true, therefore we should be skeptical of~RH.%
\par
The above argument contradicts \hyperref[prin_minimal_unitary]{Principle~{\xreffont\ref{prin_minimal_unitary}}}. The gaps between neighboring zeros of random unitary polynomials can be arbitrarily small.  That is, arbitrarily close Lehmer pairs exist for random unitary polynomials: the normalized nearest-neighbor spacing is supported on all of \(\oointerval{0}{\infty}\). Furthermore, the calculations of Odlyzko~\hyperlink{Odl}{[{\xreffont 88}]} and subsequent computations indicate that the distribution of small gaps between zeros of the \(\zeta\)-function closely match the (suitably scaled) distribution of small gaps between random unitary eigenvalues. Thus, the data on Lehmer pairs support the connection between \(\zeta\)-zeros and the \(\GUE\) (or equivalently the \(\CUE\) because the connection only concerns leading-order behavior).  But by \hyperref[prin_minimal_unitary]{Principle~{\xreffont\ref{prin_minimal_unitary}}} the data provide no reason to doubt RH.%
\begin{paragraphs}{Other ways in which RH is barely true.}{rhissues_Z-3-8}%
There are other ways in which RH is ``barely true''. The function \(Z(t) + \delta\) has complex zeros for any nonzero (positive or negative) \(\delta\in \mathbb R\), and the reason has nothing to do with Lehmer pairs. Carrier waves (the small values, not the large values, see \hyperref[wave_intuition]{Subsection~{\xreffont\ref{wave_intuition}}}) cause \(Z(t)\) to stay small on intervals containing many zeros, without those zeros being close together.  Thus, for any \(\delta \gt 0\) there will exist intervals \(I\) containing arbitrarily many zeros such that \(\abs{Z(t)} \lt \delta\) on \(I\). Therefore \(Z(t)+\delta\) and \(Z(t)-\delta\) will have many positive local minima (respectively, negative local maxima) on \(I\), and thus will have many pairs of non-real zeros.%
\par
Here is yet another way.  RH is equivalent to \(\Lambda \le 0\), where \(\Lambda\) is the de Bruijn-Newman constant~\hyperlink{New}{[{\xreffont 85}]}. Rodgers and Tao~\hyperlink{RT}{[{\xreffont 94}]} showed that \(\Lambda \ge 0\), extended by Dobner~\hyperlink{Dob}{[{\xreffont 43}]} to all \(L\)-functions. So \(\Lambda = 0\) is the only possibility consistent with~RH.%
\par
Thus, RH is at best ``barely true'' in multiple ways. That is just a fact one has to live with. It does not tip the scales for or against~RH.%
\end{paragraphs}%
\begin{paragraphs}{``The Riemann Hypothesis is not an analysis problem''.}{rhissues_Z-3-9}%
That is a quote from Brian Conrey, although he suggests that others have expressed a similar sentiment. There are several ways to understand its meaning, one of which comes from  the fact that RH is at best barely true. There are many functions which are known to have all (or almost all) of their zeros on the real line or the unit circle.  Examples are sine and cosine, Bessel and other special functions, Eisenstein series, period polynomials, and many families of orthogonal polynomials.  What all those examples have in common is that the techniques come from analysis and not number theory, and asymptotically the zeros are close to equally spaced.  In other words, the analogue of RH is not barely true:  it remains true under a small perturbation. That is a feature of analysis techniques, and that is why those techniques are not going to prove RH.%
\par
We are not saying analysis techniques are not important or useful.  Indeed, some of the refutations of reasons to doubt RH are purely analysis arguments. But if there is a proof of RH, the deep facts in that proof will not come from analysis.%
\par
The above discussion suggests it is inappropriate to use terminology like ``the Riemann Hypothesis'' to refer to the fact that \(\sin\), \(\cos\), Bessel functions, and some orthogonal polynomials have only real zeros, or that period polynomials and some other orthogonal polynomials have all zeros on the unit circle. Such terminology overinflates the depth of those results.%
\end{paragraphs}%
\end{subsectionptx}
\typeout{************************************************}
\typeout{Subsection 12.2 Reason 2: Large values of \(Z(t)\)}
\typeout{************************************************}
\begin{subsectionptx}{Subsection}{Reason 2: Large values of \(Z(t)\)}{}{Reason 2: Large values of \(Z(t)\)}{}{}{rhissues_Z-4}
In Section 5 of \hyperlink{Iv2}{[{\xreffont 73}]} the claimed reason to doubt RH directly addresses carrier waves.  Specifically, it is noted that RH combined with conjectures about the maximum size of~\(Z(t)\) indicate that (this is quoted from \hyperlink{Iv2}{[{\xreffont 73}]}).%
\begin{quote}%
...the graph of \(Z(t)\) will consist of tightly packed spikes, which will be more and more condensed as \(t\) increases, with larger and larger oscillations. This I find hardly conceivable.%
\end{quote}
The author agrees that it is hard to believe, and the difficulty of believing it was specifically mentioned in the introduction to \hyperref[waves]{Section~{\xreffont\ref{waves}}}. But believe it we must, because that is how carrier waves work. Also, what the graph looks like is just a matter of perspective: a snapshot of the \(Z\)-function covering a range of 200 zeros will look like tightly packed spikes no matter the size of the function and whether or not RH is true.%
\par
Thus, this reason for doubting RH is not valid. Indeed, a significant portion of the zeros are known to be on the critical line, so the ``tightly packed spikes'' must exist whether or not RH is true. The appearance of the graph is merely a matter of perspective: you can either see a pleasantly undulating curve, or tightly packed spikes, depending on the choice of scales on the axes.%
\end{subsectionptx}
\typeout{************************************************}
\typeout{Subsection 12.3 Reason 3: Derivatives near a large maximum}
\typeout{************************************************}
\begin{subsectionptx}{Subsection}{Reason 3: Derivatives near a large maximum}{}{Reason 3: Derivatives near a large maximum}{}{}{blanc}
The approach of Blanc~\hyperlink{Blanc1}{[{\xreffont 18}]} is based on the following interesting formula~\hyperlink{Blanc2}{[{\xreffont 19}]}.  Suppose \(f\) is a smooth function on an open interval containing \([-a, a]\) for some \(a>0\), and suppose \(\mathbf x = \{x_0,\ldots,x_n\}\) are distinct zeros of \(f\) in that interval. Then%
\begin{align}
\sum_{k=1}^r \Psi_{2k-1}(\mathbf x, a)f^{(2k-1)}(a) -\mathstrut\amp
\sum_{k=1}^r \Psi_{2k-1}(\mathbf x, -a)f^{(2k-1)}(-a) \label{blanckey}\\
\amp \phantom{xxxx} = \int_{-a}^a \Psi_{2r-1}(\mathbf x, t) f^{(2r)}(t) \, dt\text{.}\notag
\end{align}
The exact form of \(\Psi_\ell\) is tangential to our discussion, but we include it for completeness:%
\begin{equation*}
\Psi_\ell(\mathbf x, t) = \frac{(4a)^\ell}{(\ell+1)!}
\sum_{k=0}^n \mu_k(\mathbf x)\left(
B_{\ell-1}\Bigl(\tfrac12 + \frac{x+x_k}{4a}\Bigr)
+
B_{\ell-1}\Bigl(\Bigl\{\frac{x-x_k}{4a}\Bigr\}\Bigr)
\right)
\end{equation*}
where%
\begin{equation*}
\mu_k(\mathbf x) = 2^{-n} \prod_{\substack{0\le j \le n\\ j\not= k}}
\biggl(\sin\Bigl(\pi\frac{x_k}{2a}\Bigr) - \sin\Bigl(\pi\frac{x_j}{2a}\Bigr)
\biggr) ^{-1}\text{.}
\end{equation*}
Here \(B_\ell\) is the Bernoulli polynomial and \(\{x\} = x -[x]\) is the fractional part of~\(x\).  The relevant fact about \(\mu_k\) is that \(\sum_k \mu_k = 0\).%
\par
Blanc's approach involves applying \hyperref[blanckey]{({\xreffont\ref{blanckey}})} in the context shown in \hyperref[fig_blanc]{Figure~{\xreffont\ref{fig_blanc}}}, where we have repurposed data from Bober and Hiary~\hyperlink{BobHia}{[{\xreffont 21}]} which previously appeared in \hyperref[fig_boberhiary]{Figure~{\xreffont\ref{fig_boberhiary}}}.%
\begin{figureptx}{Figure}{The function \(\log \abs{Z(t)}\) near a local extremum caused by a large zero gap, and \(S(t)\) in the same region.  The parameters \(T\) and~\(a\) are chosen so that \(Z'(T+a)\) is a (large) local extremum, \(Z'(T-a)\) is a (presumably small) local extremum, and \(S(T+a)\) and \(S(T-a)\) have opposite sign.}{fig_blanc}{}%
\begin{image}{0}{1}{0}{}%
\includegraphics[width=\linewidth]{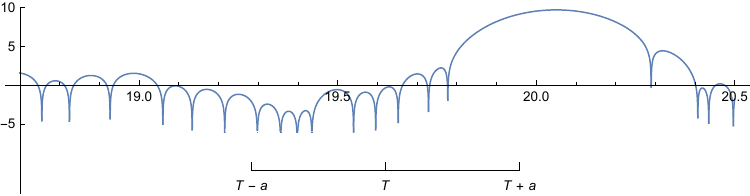}
\end{image}%
\begin{image}{0}{1}{0}{}%
\includegraphics[width=\linewidth]{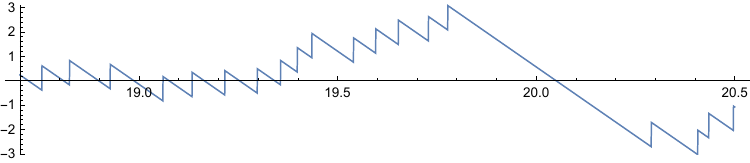}
\end{image}%
\tcblower
\end{figureptx}%
Blanc chooses \(T\) and \(a\) so that \(T+a\) lies in a large zero gap, \(\abs{Z(T + t)}\) has a very large maximum a bit to the right of \(a\), and \(T+a\) is a local extremum of \(Z'(T+a)\). Those choices are illustrated in \hyperref[fig_blanc]{Figure~{\xreffont\ref{fig_blanc}}}, although it is difficult to see an inflection point of \(Z(t)\) on the graph of \(\log\abs{Z(t)}\).  Note how the large zero gap causes the maximum of \(Z'(t)\) to shift toward the maximum of \(Z(t)\). Furthermore assume \(S(T+t)\) changes sign in \((-a,a)\). Blanc also chooses \(a\) so that \(Z'(T-a)\) is a local extremum, although that does not seem to be essential.%
\par
The setup above lets us describe Blanc's main idea.  With \(f(t)=Z(T+t)\), the quantity \(f^{(2k-1)}(a)\)  in \hyperref[blanckey]{({\xreffont\ref{blanckey}})} will be large for small \(k\).  For larger \(k\) there is no reason for \(f^{(2k-1)}(a)\) to be large, and for all \(k\) there is no reason for \(f^{(2k-1)}(-a)\) to be large. Thus, some of the quantities on the left side of \hyperref[blanckey]{({\xreffont\ref{blanckey}})} will be very large.  \hyperref[BHtable]{Table~{\xreffont\ref{BHtable}}} lists the quantities in \hyperref[blanckey]{({\xreffont\ref{blanckey}})} in the context of \hyperref[fig_blanc]{Figure~{\xreffont\ref{fig_blanc}}}. Similar to Table~1 of \hyperlink{Blanc1}{[{\xreffont 18}]}, we write%
\begin{gather}
\beta^\pm_{\ell} = (-1)^{(\ell-1)/2}\Psi_{\ell}(\mathbf x, T\pm a)\theta'(T)^{\ell}\label{betaeqn}\\
d^\pm_{\ell} = (-1)^{(\ell-1)/2}Z^{\ell}(T\pm a)/\theta'(T)^\ell\text{,}\label{deqn}
\end{gather}
so the left side of \hyperref[blanckey]{({\xreffont\ref{blanckey}})} is \(\sum \beta^+_{\ell} d^+_{\ell} - \sum \beta^-_{\ell} d^-_{\ell}\).%
\begin{tableptx}{Table}{\textbf{Quantities appearing in ({\xreffont\ref*{blanckey}}) using data from Bober and Hiary~[{\xreffont 21}] as shown in Figure~{\xreffont\ref*{fig_blanc}}}}{BHtable}{}%
\centering%
{\tabularfont%
\begin{tabular}{rrrrrrrr}
\multicolumn{1}{c}{{\bfseries{}\(\ell\)}}&\multicolumn{1}{c}{{\bfseries{}\(\beta^+_\ell\)}}&\multicolumn{1}{c}{{\bfseries{}\(d^+_\ell\)}}&\multicolumn{1}{c}{{\bfseries{}\(\beta^+_\ell d^+_\ell\)}}&\multicolumn{1}{c}{{\bfseries{}~}}&\multicolumn{1}{c}{{\bfseries{}\(\beta^-_\ell\)}}&\multicolumn{1}{c}{{\bfseries{}\(d^-_\ell\)}}&\multicolumn{1}{c}{{\bfseries{}\(\beta^-_\ell d^-_\ell\)}}\tabularnewline\hrulethin
1&0.000896&3209.249&2.877&~&-0.00561&0.24181&-0.001\tabularnewline[0pt]
3&0.004657&625.137&2.911&~&-0.01944&0.37736&-0.007\tabularnewline[0pt]
5&0.014213&182.535&2.549&~&-0.04274&0.52865&-0.022\tabularnewline[0pt]
7&0.033762&67.231&2.296&~&-0.07822&0.70713&-0.055\tabularnewline[0pt]
9&0.069663&28.657&1.996&~&-0.13156&0.91859&-0.120\tabularnewline[0pt]
11&0.132016&13.376&1.765&~&-0.21285&1.17131&-0.249
\end{tabular}
}%
\end{tableptx}%
We can obtain a contradiction if we can show that the left side of  \hyperref[blanckey]{({\xreffont\ref{blanckey}})} is large and the right side is small. For the data from \hyperref[fig_blanc]{Figure~{\xreffont\ref{fig_blanc}}}, the right side of \hyperref[blanckey]{({\xreffont\ref{blanckey}})} is approximately \(13.95852\), which is consistent with \hyperref[BHtable]{Table~{\xreffont\ref{BHtable}}}. That merely serves as a check on the numerical computations, because RH is true in the range of those data.%
\par
If we were only considering characteristic polynomials, then by \hyperref[prin_minimal_unitary]{Principle~{\xreffont\ref{prin_minimal_unitary}}} we cannot obtain such a contradiction. Therefore we need another ingredient which is specific to \(L\)-functions. This is provided by another interesting result of Blanc \hyperlink{Blanc1}{[{\xreffont 18}]}.%
\par
Recall that by \hyperref[eqn_thetat]{({\xreffont\ref{eqn_thetat}})} we have \(\theta'(t) \sim \frac12 \log \frac{t}{2\pi}\).%
\begin{theorem}{Theorem}{Blanc [{\xreffont 18}].}{}{thm_Z2Kbound}%
Suppose \(T\) is sufficiently large, \(\delta > \frac12\), and \(a = o(T/\theta'(T))\). If%
\begin{equation}
\delta \theta'(T)\le 2K \le 2\theta'(T)^2\label{thetabound}
\end{equation}
then%
\begin{equation}
\abs{Z^{(2 K)}(t)} \le 2 \zeta(\tfrac12 + \delta) \theta'(T)^{2K}\label{Z2kbound}
\end{equation}
for \(t\in [T-a, T+a]\).%
\end{theorem}
Blanc \hyperlink{Blanc1}{[{\xreffont 18}]} states the case \(\delta=\frac32\) and estimates \(2\zeta(2) \lt 4\). It might be worth investigating what choice of \(\delta\) is optimal for applications of \hyperref[thm_Z2Kbound]{Theorem~{\xreffont\ref{thm_Z2Kbound}}}.%
\par
By \hyperref[thm_Z2Kbound]{Theorem~{\xreffont\ref{thm_Z2Kbound}}} we can choose \(r\) so that the right side of \hyperref[blanckey]{({\xreffont\ref{blanckey}})} is small.  Therefore, if the setup described above makes the left side of  \hyperref[blanckey]{({\xreffont\ref{blanckey}})} large, we have obtained a contradiction.%
\par
The analogue of \hyperref[thm_Z2Kbound]{Theorem~{\xreffont\ref{thm_Z2Kbound}}} is not true for unitary polynomials, so we explore the implications of \hyperref[Z2kbound]{({\xreffont\ref{Z2kbound}})} in some detail before returning to Blanc's argument.%
\par
Note that \hyperref[Z2kbound]{({\xreffont\ref{Z2kbound}})} is not true for any fixed \(K\) as \(T\to\infty\), because that would contradict known \(\Omega\)-results and conjectures for \(Z^{(n)}(t)\). We now explain why, more generally, the bound \hyperref[Z2kbound]{({\xreffont\ref{Z2kbound}})} is not true for \(K\) far outside the range \hyperref[thetabound]{({\xreffont\ref{thetabound}})}. We do this by exploiting properties of repeated differentiation.%
\begin{paragraphs}{Everyone wants to be cosine.}{blanc-16}%
Differentiating a function like \(Z(t)\) can be viewed as an operation on its set of zeros, replacing the zeros of \(Z(t)\) by the zeros of \(Z'(t)\).  The relevant fact here is that differentiation causes the zeros to become more evenly spaced~\hyperlink{FR}{[{\xreffont 52}]}\hyperlink{Ki}{[{\xreffont 78}]}, with large gaps becoming smaller and small gaps becoming larger. Thus, differentiation damps the irregularities in the size of the function: very large maxima (measured relative to the size of nearby maxima) become less extreme. Under repeated differentiation the zero gaps are becoming more regular, eventually approaching equality.  For sufficiently large~\(k\) the \(k\)th derivative is locally approximately a function with equally spaced zeros. In other words, for \(t\) in any fixed interval, if \(k\) is sufficiently large, then%
\begin{equation}
Z^{(k)}(t) \approx A_k \cos(B_k t + C_k)\label{approxCosine}
\end{equation}
throughout that interval. Berry \hyperlink{Berry}{[{\xreffont 13}]} termed this phenomenon \terminology{cosine is a universal attractor}.%
\par
If the number of derivatives is too small for the zeros to be approximately equally spaced, then the variation in the zero spacing will prevent the bound \hyperref[Z2kbound]{({\xreffont\ref{Z2kbound}})} from holding.  So \hyperref[Z2kbound]{({\xreffont\ref{Z2kbound}})} will not hold if \(K\) is much smaller than the lower bound in~\hyperref[thetabound]{({\xreffont\ref{thetabound}})}.%
\par
For a moderate number of derivatives, the parameter \(B_k\) in \hyperref[approxCosine]{({\xreffont\ref{approxCosine}})} directly relates to the local density of zeros. Since the local density of zeros of \(Z(t)\) varies slowly, when the \(k\)th derivative initially approaches cosine we have  \(B_k \approx \theta'(T)\), as we previously saw in~\hyperref[bparameter]{({\xreffont\ref{bparameter}})}. That fact, and the chain rule, explains why powers of \(\theta'(T)\) appear on the right side of \hyperref[Z2kbound]{({\xreffont\ref{Z2kbound}})}, and the bound is optimal~\hyperlink{Blanc3}{[{\xreffont 20}]}.%
\par
Finally, if we take ``too many'' derivatives then the local density of zeros begins increasing, so \(B_k\) starts to increase and therefore \hyperref[Z2kbound]{({\xreffont\ref{Z2kbound}})} will not hold if \(K\) is much larger than the upper bound in~\hyperref[thetabound]{({\xreffont\ref{thetabound}})}.%
\end{paragraphs}%
\begin{paragraphs}{Some unitary polynomials approach cosine more slowly.}{blanc-17}%
We have already alluded to the fact that \hyperref[thm_Z2Kbound]{Theorem~{\xreffont\ref{thm_Z2Kbound}}} does not hold for all unitary polynomials.  More precisely, the analogue of the bound \hyperref[Z2kbound]{({\xreffont\ref{Z2kbound}})} does hold, but the number of derivatives needed to realize that bound can be much larger than the range \hyperref[thetabound]{({\xreffont\ref{thetabound}})}. To see this, it is sufficient to consider self-reciprocal polynomials in \terminology{polar form}:%
\begin{equation}
\mathcal Z(z) = z^{\frac{N}{2}} + a_{N-1}z^{\frac{N-1}{2}} +
\cdots
+ \overline{a}_{N-1}z^{-\frac{N-1}{2}} + z^{-\frac{N}{2}}\label{selfreciprocalpoly}
\end{equation}
with differentiation operator \(D = z \frac{d}{dz}\). Note also that the analogue of \(\theta'(T)\) is \(N/2\).%
\par
We see that%
\begin{align}
D^{2 K} \mathcal Z(z) =\mathstrut\amp \left(\frac{N}{2}\right)^{2K} z^{\frac{N}{2}} 
+ \left(\frac{N-1}{2}\right)^{2K} a_{N-1}z^{\frac{N-1}{2}}
+\notag\\
\amp \phantom{xxxx} 
\cdots
+  \left(\frac{N-1}{2}\right)^{2K} \overline{a}_{N-1}z^{-\frac{N-1}{2}} 
+ \left(\frac{N}{2}\right)^{2K} z^{-\frac{N}{2}}\notag\\
=\mathstrut\amp \left(\frac{N}{2}\right)^{2K} z^{-\frac{N}{2}} 
\biggl(z^N + (1-N^{-1})^{2K} a_{N-1} z^{N-1} +\notag\\
\amp \phantom{xxxx} 
\cdots
+ (1-2 N^{-1})^{2K} \overline{a}_{N-2} z^{2}
+ (1-N^{-1})^{2K} \overline{a}_{N-1} z + 1 \biggr)\notag\\
=\mathstrut\amp \left(\frac{N}{2}\right)^{2K} z^{-\frac{N}{2}}
\left(z^N + 1 + o(1)\right)\label{D2kpoly}
\end{align}
as \(K\to\infty\). By \hyperref[D2kpoly]{({\xreffont\ref{D2kpoly}})} the roots of \(D^{2 K} \mathcal Z(z)\) are approaching the \(N\)th roots of~\(1\), so in particular they are approaching equal spacing. It was not even necessary to assume the original self-reciprocal polynomial had all its zeros on the unit circle. Yet the analogue of \hyperref[thm_Z2Kbound]{Theorem~{\xreffont\ref{thm_Z2Kbound}}} fails without further restrictions on the polynomials because the required number of derivatives may be much larger than the analogue of the lower bound in \hyperref[thetabound]{({\xreffont\ref{thetabound}})}. One can see this from \hyperref[D2kpoly]{({\xreffont\ref{D2kpoly}})} because \((1-N^{-1})^{2 K}\) is not small if \(K=O(N)\).%
\par
For unitary polynomials the analogue of \hyperref[thm_Z2Kbound]{Theorem~{\xreffont\ref{thm_Z2Kbound}}} has counterexamples similar to the failure of the analogue of the Lindelöf hypothesis for some unitary polynomials. Consider \(z^{-N/2}(z+1)^N\), with differentiation operator \(D = z \frac{d}{dz}\). If we take \(\kappa N\) \(D\)-derivatives, with \(\kappa \lt 1\), the polynomial will still have a high multiplicity zero at \(z=-1\).  That will force the other zeros to be spread out, and so the function will be large.  Even if we take \(N\) derivatives, so all the zeros are simple, it will still take many more derivatives for the zeros to become approximately equally spaced. In particular, by considering only the \(N-\sqrt{N}\)th coefficient of \((z+1)^N\), we see that \(D^N z^{-N/2}(z+1)^N\) is large at \(z=1\).%
\end{paragraphs}%
\par\medskip
From the above discussion we see that the analogue of \hyperref[thm_Z2Kbound]{Theorem~{\xreffont\ref{thm_Z2Kbound}}} does not hold for characteristic polynomials, so Blanc's argument, which we analyze next, does not fall victim to \hyperref[prin_minimal_unitary]{Principle~{\xreffont\ref{prin_minimal_unitary}}}.%
\begin{paragraphs}{Analysis of Blanc's argument.}{blanc-19}%
Summarizing the ideas so far: we want to assume a very large gap between zeros, causing a very large isolated local maximum of \(Z(t)\).  With the choices of \(T\) and \(a\) as in \hyperref[fig_blanc]{Figure~{\xreffont\ref{fig_blanc}}}, the quantity \(Z^{(2k-1)}(a)\) in \hyperref[blanckey]{({\xreffont\ref{blanckey}})} will be very large for small~\(k\). This might lead to a contradiction if the parameter \(r\) allows \hyperref[thm_Z2Kbound]{Theorem~{\xreffont\ref{thm_Z2Kbound}}} to apply, because then the right side of \hyperref[blanckey]{({\xreffont\ref{blanckey}})} will be small.  If the left side of \hyperref[blanckey]{({\xreffont\ref{blanckey}})} is large and the right side is small, then we have obtained a contradiction.%
\par
Given a large gap between zeros, and choosing \(r\) so that \hyperref[thm_Z2Kbound]{Theorem~{\xreffont\ref{thm_Z2Kbound}}} applies, how can having \(Z^{(2k-1)}(a)\) be very large for small \(k\) fail to lead to a contradiction?  There are two ways. %
\begin{enumerate}
\item\hypertarget{Psismall}{}\(\Psi_{1}(\mathbf{x},a)\) might be very small, so actually the left side of \hyperref[blanckey]{({\xreffont\ref{blanckey}})} is small. (For example, see \hyperref[BHtable]{Table~{\xreffont\ref{BHtable}}})%
\item\hypertarget{otherterms}{}Some other terms on the left side might be large and of the opposite sign, causing cancellation.%
\end{enumerate}
If the above possibilities do not apply (so we do obtain a contradiction), what has been contradicted?  Have we contradicted RH, or have we contradicted something else?  We will argue that such a contradiction, if it happens, has nothing to do with RH but rather it comes from assuming an excessively large gap between zeros.  In other words, the theorems of Blanc may actually contain hidden information which might be used to provide an improved upper bound on gaps between zeros of the \(\zeta\)-function.%
\par
Blanc addresses \hyperlink{otherterms}{Item~{\xreffont 2}} above by noting that the setup in \hyperref[fig_blanc]{Figure~{\xreffont\ref{fig_blanc}}} specifically avoids the likelihood that other terms are large. Figure~2 in~\hyperlink{Blanc1}{[{\xreffont 18}]} provides good evidence for the intuition that \(\beta_\ell^+\) should be small for large \(\ell\). %
\par
To investigate the size of \(\Psi_{\ell}(\mathbf{x},a)\), we consider a scenario similar to \hyperref[fig_blanc]{Figure~{\xreffont\ref{fig_blanc}}} at height \(10^{100}\), which Figure~2 in \hyperlink{Blanc1}{[{\xreffont 18}]} suggests is sufficient to test the method.  Since \(\log 10^{100} \approx 230.26\), we can use degree \(230\) unitary polynomials.  Given a large zero gap, we can place the other zeros in their most likely location as described in \hyperref[nearlarge]{Subsection~{\xreffont\ref{nearlarge}}}. An example of this, with a gap 6 times the average in a degree \(74\) polynomial, is shown in \hyperref[fig_RMT74]{Figure~{\xreffont\ref{fig_RMT74}}}.  Since everything is explicit, we can evaluate every quantity in \hyperref[blanckey]{({\xreffont\ref{blanckey}})}. \hyperref[blancfakedata]{Table~{\xreffont\ref{blancfakedata}}} lists the terms in \hyperref[betaeqn]{({\xreffont\ref{betaeqn}})} and \hyperref[deqn]{({\xreffont\ref{deqn}})} for degree \(230\) polynomials with a large gap of \(5\), \(10\) and \(15\) times the average spacing, with all other zeros close to their most likely configuration.%
\begin{tableptx}{Table}{\textbf{}}{blancfakedata}{}%
\centering%
{\tabularfont%
\begin{tabular}{rrrrrrrrr}
\multicolumn{1}{c}{{\bfseries{}\(\ell\)}}&\multicolumn{1}{c}{{\bfseries{}\(\beta^+_\ell(5)\)}}&\multicolumn{1}{c}{{\bfseries{}\(d^+_\ell(5)\)}}&\multicolumn{1}{c}{{\bfseries{}}}&\multicolumn{1}{c}{{\bfseries{}\(\beta^+_\ell(10)\)}}&\multicolumn{1}{c}{{\bfseries{}\(d^+_\ell(10)\)}}&\multicolumn{1}{c}{{\bfseries{}~}}&\multicolumn{1}{c}{{\bfseries{}\(\beta^+_\ell(15)\)}}&\multicolumn{1}{c}{{\bfseries{}\(d^+_\ell(15)\)}}\tabularnewline\hrulethin
1&0.00099&363.5&&\(2.5\times10^{-6}\)&446988.0&~&\(1.9\times10^{-8}\)&\(4.9\times10^{8}\)\tabularnewline[0pt]
3&0.00329&95.2&&0.000016&57835.6&~&\(1.9\times10^{-7}\)&\(4.2\times10^7\)\tabularnewline[0pt]
5&0.00654&37.7&&0.000057&11436.1&~&\(1.0\times10^{-6}\)&\(5.6\times10^{6}\)
\end{tabular}
}%
\end{tableptx}%
The main takeaway from \hyperref[blancfakedata]{Table~{\xreffont\ref{blancfakedata}}} is that \(\beta_\ell^+\) is small for small \(\ell\) when \(d_1^+\) is large. Thus, we must question the possibility of obtaining a contradiction from \hyperref[blanckey]{({\xreffont\ref{blanckey}})}. Specifically, we suggest that there is no evidence to support Blanc's assertion (bottom of first column on page 5 of \hyperlink{Blanc1}{[{\xreffont 18}]}) ``the sum of the first \(\beta_{2\ell-1}^+ d_{2\ell-1}^+\) is probably large''.%
\par
The comments above do not address a key element of Blanc's argument, based on \hyperref[Z2kbound]{({\xreffont\ref{Z2kbound}})}, for which the analogous result for unitary polynomials is not true. Translating \hyperref[Z2kbound]{({\xreffont\ref{Z2kbound}})} to the realm of self-reciprocal polynomials gives the statement: in the context of the polar form \hyperref[selfreciprocalpoly]{({\xreffont\ref{selfreciprocalpoly}})}, with \(\mathcal Z^{2K}(z) := D^{2K} \mathcal Z(z)\),%
\begin{equation}
\text{if} \ \  K\ge \frac38 N
\ \ \text{then} \ \ 
\max_{\abs{z}=1} \abs{\mathcal Z^{2K}(z)} \le 3.29 \Bigl(\frac{N}{2}\Bigr)^{2K}\text{.}\label{polyderivbound}
\end{equation}
That bound on \(\abs{\mathcal Z^{2K}(z)}\) holds if \(K\) is sufficiently large, but not in general for \(K\ll N\). In particular, for the degree 230 polynomial with a zero gap 15 times the average used in \hyperref[blancfakedata]{Table~{\xreffont\ref{blancfakedata}}}, one must take \(197\) derivatives in order to achieve the estimate in \hyperref[polyderivbound]{({\xreffont\ref{polyderivbound}})}. The polynomial with gap 10 times the average requires \(123\) derivatives. This suggests to the author that the ingredients in Blanc's argument do not throw doubt on RH, but instead point to a possible new method for bounding the size of large gaps between zeros of the \(\zeta\)-function. We propose:%
\begin{problem}{Problem}{}{blanc-19-9}%
Suppose \(\mathcal Z\) is a polar unitary polynomial of degree \(N\) with a zero gap of \(\kappa\) times the average.  For \(C \gt 2\), determine a lower bound on \(\eta = \eta(\kappa, C)\) such that if \(2K \lt \eta N\) then%
\begin{equation*}
\max_{\abs{z}=1} \abs{\mathcal Z^{(2 K)}(z)} > C \Bigl(\frac{N}{2}\Bigr)^{2 K}\text{.}
\end{equation*}
\end{problem}
The techniques used to solve that problem might translate into a method for improving the upper bound on gaps between zeros of the \(\zeta\)-function.%
\end{paragraphs}%
\begin{paragraphs}{Further reflections on characteristic polynomials.}{blanc-20}%
There is an apparent discrepancy in our discussion of large gaps between zeros.  On the one hand we have described how random matrix theory provides a precise conjecture on the maximum gap size.  On the other hand, we have described properties of \(Z(t)\) which suggest constraints on large gaps between zeros, and those constraints do not apply to characteristic polynomials.%
\par
These apparent discrepancies might be resolved by examining in more detail how characteristic polynomials are used to model \(Z(t)\). By \hyperref[eNprin]{Principle~{\xreffont\ref{eNprin}}}, to model the largest gaps one chooses around \(e^N\) independent random matrices in \(U(N)\), where \(N\approx \log T\). Based on the enormous success of using RMT to shed light on \(L\)-functions, it it plausible that the analogue of \hyperref[thm_Z2Kbound]{Theorem~{\xreffont\ref{thm_Z2Kbound}}} holds almost surely (in the probabilistic sense) for \(e^N\) Haar-random matrices chosen from \(U(N)\).%
\end{paragraphs}%
\end{subsectionptx}
\end{sectionptx}
\typeout{************************************************}
\typeout{Section 13 Claimed reasons to doubt RH: other properties}
\typeout{************************************************}
\begin{sectionptx}{Section}{Claimed reasons to doubt RH: other properties}{}{Claimed reasons to doubt RH: other properties}{}{}{rhissues_other}
\begin{introduction}{}%
The final two claims we consider involve averages of high powers of \(Z(t)\), and properties of Dirichlet series which are not \(L\)-functions.%
\end{introduction}%
\typeout{************************************************}
\typeout{Subsection 13.1 Reason 4: Error terms in moment formulas, and maybe the Lindelöf Hypothesis is also false}
\typeout{************************************************}
\begin{subsectionptx}{Subsection}{Reason 4: Error terms in moment formulas, and maybe the Lindelöf Hypothesis is also false}{}{Reason 4: Error terms in moment formulas, and maybe the Lindelöf Hypothesis is also false}{}{}{rhissues_other-3}
The argument in Section 4 of \hyperlink{Iv2}{[{\xreffont 73}]} is based on the meager knowledge we have about moments of the \(\zeta\)-function:%
\begin{align*}
\int_0^T Z(t)^2 \, dt  =\mathstrut\amp T\log T +
(2\gamma - 1 - \log 2\pi) T   + O(T^{E_1 + \varepsilon})\\
\int_0^T Z(t)^4 \, dt  =\mathstrut\amp T P_2(\log T)  + O(T^{E_2 + \varepsilon})
\end{align*}
where \(P_2\) is a certain degree~4 polynomial. The conjectured optimal error terms are \(E_1 = \frac14\) and \(E_2 = \frac12\).%
\par
For higher moments, the conjecture \hyperlink{CG1}{[{\xreffont 33}]} is:%
\begin{align}
\int_0^T Z(t)^{2k}\, dt =\mathstrut\amp T P_k(\log T) + O(T^{E_k + \varepsilon})\label{twokthmoment}\\
\sim\mathstrut\amp  g_k a_k T\log^{k^2} T\text{,}\label{leadingtwokthmoment}
\end{align}
as \(T\to\infty\), for some \(E_k \lt 1\), where \(P_k(x)\) is a polynomial of degree \(k^2\) with leading coefficient \(g_k a_k \), where \(a_k\) is given explicitly, and \(g_k\) is an integer. That conjecture is open for \(k\ge 3\).%
\begin{paragraphs}{Why moments?}{rhissues_other-3-4}%
Moments of the \(\zeta\)-function were introduced by Hardy and Littlewood~\hyperlink{HL}{[{\xreffont 63}]} as an approach toward the \hyperref[conjLH]{Lindelöf Hypothesis~{\xreffont\ref{conjLH}}}.%
\par
RH implies LH, so it is natural to attack LH as an ``easier'' problem. LH is equivalent to%
\begin{equation}
\int_0^T Z(t)^{2 k} \, dt \ll_k T^{1 + \varepsilon}\label{twok1plusepsilon}
\end{equation}
for all integer \(k \gt 0\), which is why Hardy and Littlewood investigated moments.%
\par
Bounding such moments proved to be difficult: 100 years after Hardy and Littlewood we only know \hyperref[twok1plusepsilon]{({\xreffont\ref{twok1plusepsilon}})} for \(k=1\) and \(2\).  But instead of becoming a dead-end in terms of proving~LH, moments became a topic of independent interest. Other ``families'' of \(L\)-functions were introduced, with various types of moments. The focus shifted to finding precise formulas for the main terms in moments, not just on estimating the order of magnitude. Random matrix theory played a major role in understanding the leading order behavior and the structure of those moments, although ultimately number-theoretic heuristics provided the most precise conjectures. See \hyperlink{KS1}{[{\xreffont 76}]}\hyperlink{CF}{[{\xreffont 30}]}\hyperlink{KS2}{[{\xreffont 77}]}\hyperlink{CFKRS}{[{\xreffont 31}]}\hyperlink{DGH}{[{\xreffont 42}]}.%
\end{paragraphs}%
\begin{paragraphs}{Analysis of the argument.}{rhissues_other-3-5}%
The claimed argument against RH arises from the error term in the \(2k\)th moment.  Based on the conjectured best possible values \(E_1 = \frac14\) and \(E_2 = \frac12\) Ivić suggests \hyperlink{Iv1}{[{\xreffont 72}]}\hyperlink{Iv2}{[{\xreffont 73}]} that the best possible error term in the \(2k\)th moment is \(O(T^{k/4 + \varepsilon})\). There is slightly more to Ivić's reasoning, such as the apparent inability of available techniques to provide bounds on high moments, but the main input is the conjectured values for \(E_1\) and~\(E_2\).%
\par
If the error term \hyperref[twokthmoment]{({\xreffont\ref{twokthmoment}})} truly is of size \(T^{k/4+\varepsilon}\), meaning that \hyperref[twokthmoment]{({\xreffont\ref{twokthmoment}})} is false with \({k}/{4}\) replaced by a smaller number, then \hyperref[twok1plusepsilon]{({\xreffont\ref{twok1plusepsilon}})} is also false.  Therefore LH is false, so RH is false. That is the essence of that argument against RH.%
\par
Basing an argument on such limited data for small \(k\) is problematic, and the conjecture of \(E_k = k/4\) is not based on any underlying principles. Suppose someone instead conjectured \(E_k = \frac34 - 2^{-k}\) for \(k\ge 1\).  That also is a simple formula which fits the limited data, and it has the added benefit of implying~LH.  But without any good evidence, that conjecture should also not be taken seriously. Thus, this proposed reason to doubt RH is not based on a mathematical foundation, but on naive pattern matching.%
\par
Before the reader criticizes the author for being unkind, let us note that this author fully recognizes the folly of speculating on the size of error terms, and indeed is guilty of similarly unwise speculation.  In a foundational paper on moments of \(L\)-functions~\hyperlink{CFKRS}{[{\xreffont 31}]} this author and coauthors conjecture that in great generality the error term in any conjectured higher moment is~\(E_k = \frac12\). The conjecture of Ivić is as pessimistic as possible, and the conjecture in~\hyperlink{CFKRS}{[{\xreffont 31}]} is as naively optimistic as possible. Unfortunately, that optimistic error term is not always correct. Specifically, in some cases~\hyperlink{Dia}{[{\xreffont 41}]}\hyperlink{DGH}{[{\xreffont 42}]} there is a secondary main term of size \(X^{\frac34}\). It may even be that similar secondary main terms are common. The lesson is that speculating on the error term in moment conjectures is difficult, and it is folly to draw conclusions from such  speculation.%
\end{paragraphs}%
\begin{paragraphs}{The structure of moments.}{rhissues_other-3-6}%
Immediately after making a poorly founded conjecture about error terms, Ivić goes on \hyperlink{Iv1}{[{\xreffont 72}]}\hyperlink{Iv2}{[{\xreffont 73}]} to make the perceptive speculation: \begin{quote}%
the shape of the asymptotic formula...changes when \(k=4\).%
\end{quote}
 That speculation has been borne out by subsequent work on the structure of moments.%
\par
The \(2k\)th moment \hyperref[twokthmoment]{({\xreffont\ref{twokthmoment}})} is only known for \(k=0\), \(1\), and \(2\).  Conjectures for larger moments are a fairly recent development. One reason those moments are difficult is that the natural way to proceed is via an ``approximate functional equation'' for \(Z(t)^k\), which is a sum of two Dirichlet polynomials of length \(t^{k/2}\). (The Riemann-Siegel formula \hyperref[RiemannSiegel]{({\xreffont\ref{RiemannSiegel}})} can be viewed as an example: there are \(\approx t^{\frac12}\) terms in the sum.) When used to evaluate \(\int_0^T Z(t)^{2k}\, dt\) the approximate functional equation will have length~\(T^{k/2}\). The difficulty is the lack of tools to handle Dirichlet polynomials where the length of the polynomial is greater than the length of the integral.%
\par
Conrey and Ghosh~\hyperlink{CG2}{[{\xreffont 34}]} developed a heuristic approach which gave a conjecture when \(k=3\), and Conrey and Gonek~\hyperlink{CoGo}{[{\xreffont 36}]} pushed the method to its limit to make a conjecture when \(k=4\).  That truly is the limit of the method, because when applied to the case \(k=5\) those methods produce an answer which is negative! Thus, Ivić was correct: some new phenomenon appears, beginning at \(k=4\).  But that new phenomenon has more implications for that main term than for the error term.%
\end{paragraphs}%
\end{subsectionptx}
\typeout{************************************************}
\typeout{Subsection 13.2 Reason 5: The Deuring-Heilbronn phenomenon}
\typeout{************************************************}
\begin{subsectionptx}{Subsection}{Reason 5: The Deuring-Heilbronn phenomenon}{}{Reason 5: The Deuring-Heilbronn phenomenon}{}{}{rhissues_other-4}
The Riemann \(\zeta\)-function is the simplest of an infinite family of functions known as ``\(L\)-functions''. The next simplest examples are the Dirichlet \(L\)-functions, \(L(s,\chi)\), where \(\chi\) is a primitive Dirichlet character. RH for all \(L(s,\chi)\) is known as the \terminology{Generalized Riemann Hypothesis}.%
\par
Informally, by \terminology{\(L\)-function} we mean a Dirichlet series with a functional equation and an Euler product of a particular form.  To give a more precise definition, there are two primary approaches. The \emph{axiomatic approach} considers \(L\)-functions as analytic functions with certain properties. Within the axiomatic approach at one extreme we have the Selberg class~\hyperlink{Sel}{[{\xreffont 104}]}\hyperlink{CG3}{[{\xreffont 35}]}, which has a minimal set of general axioms.  At the other extreme are the tempered balanced analytic \(L\)-functions~\hyperlink{FPRS}{[{\xreffont 51}]} which have a large number of precise axioms.%
\par
The \emph{structural approach} describes \(L\)-functions as arising from arithmetic or automorphic objects.  For the perspective in this paper, the important fact is that conjecturally all approaches describe the same set of functions, and \emph{all such \(L\)-functions have an analogue of the Riemann Hypothesis}.  The collection of all such Riemann Hypotheses is called the \terminology{grand Riemann Hypothesis}.%
\par
Under any of the above definitions, each \(L\)-function has a functional equation analogous to \hyperref[zeta_fe]{({\xreffont\ref{zeta_fe}})}, a \(\xi_L\)-function analogous to \hyperref[eqn_xi_def]{({\xreffont\ref{eqn_xi_def}})}, and a \(Z_L\)-function analogous to \hyperref[eqn_Zdef]{({\xreffont\ref{eqn_Zdef}})}. Each \(Z_L\)-function is a real valued function which wiggles with the same type of randomness as~\(Z(t)\). It also has carrier waves as described in the earlier parts of this paper.%
\begin{paragraphs}{Linear combinations.}{rhissues_other-4-6}%
The Deuring-Heilbronn phenomenon concerns functions which are not \(L\)-functions, but rather are linear combinations of \(L\)-functions.  As we will see in \hyperref[prin_DH]{Principle~{\xreffont\ref{prin_DH}}}, a nontrivial linear combination of \(L\)-functions never satisfies~RH, in the worst possible way.%
\par
Consider two \(L\)-functions which are being added, and focus on a small interval of the real line (say, where one expects to see a few dozen zeros of each \(Z\)-function). If one of the \(Z\)-functions happens to be much larger than the other over that entire interval, then the sum looks just like the larger \(Z\)-function. In particular, if the larger \(Z\)-function satisfies RH, then their sum satisfies RH on that interval. That is the typical situation and this was the motivation for developing the idea of carrier waves: the logarithm of each \(Z\)-function has a Gaussian distribution, and because of the carrier waves its size does not change too rapidly.  Thus, 100\% of the time one of them is much larger than the other and stays larger over a span of many zeros. Under some mild additional hypotheses, that is how Bombieri and Hejhal~\hyperlink{BomHej}{[{\xreffont 24}]} proved that if two \(L\)-functions have the same functional equation and each satisfies RH, then their sum has 100\% of its zeros on the critical line.%
\par
But, 100\% is not everything. As the two \(Z\)-functions take turns being the largest, occasionally they are about the same size over the span of a few zeros.  When that happens, and we are adding two different wiggling functions of similar size, not all the zeros of the sum will be real. In other words:%
\begin{principle}{Principle}{}{}{prin_lin_comb}%
A nontrivial linear combination of \(L\)-functions will not satisfy the Riemann Hypothesis.%
\end{principle}
\end{paragraphs}%
\begin{paragraphs}{Naturally occurring examples.}{rhissues_other-4-7}%
The above discussion makes it seem obvious that a nontrivial linear combination of \(L\)-functions will never satisfy RH. But the situation was not always so clear. Let \(\chi_{5,2}\) be the Dirichlet character \(\mathstrut\bmod 5\) with \(\chi(2)=i\), and define~\(\theta\) by \(\tan \theta = (\sqrt{10 - 2 \sqrt{5}} - 2)/(\sqrt{5} - 1)\). The \terminology{Deuring-Heilbronn function} is given by%
\begin{equation}
F_{DH}(s) = \tfrac12 \sec \theta \left(
e^{-i\theta} L(s, \chi_{5,2}) + e^{i\theta} L(s, \overline{\chi}_{5,2})\right)\label{DHfunction}
\end{equation}
which satisfies the functional equation%
\begin{equation}
\xi_{DH}(s) := 5^{\frac12 s}\pi^{-\frac12 s}\Gamma(\tfrac12 s + \tfrac12) F_{DH}(s)
=\xi_{DH}(1-s)\text{.}\label{DHfe}
\end{equation}
Except for the sign, that is the same functional equation satisfied by \(L(s,\chi_{5,2})\). However \(F_{DH}(s)\) does not satisfy RH:  its first zero off the critical line is near \(0.8085 + 85.699 i\). Furthermore~\hyperlink{T}{[{\xreffont 110}]} it has infinitely many zeros in~\(\sigma > 1\), although it does not seem that an explicit example has ever been computed.%
\par
We have two Dirichlet series, \(F_{DH}(s)\) and \(L(s,\chi_{5,2})\), which satisfy similar functional equations.  We know that one of them does not satisfy RH, so why should the other? That is the essence of the argument in Section~3 of \hyperlink{Iv2}{[{\xreffont 73}]}: why should we believe RH for one function when we know it is false for another, and those functions (quoting from~\hyperlink{Iv2}{[{\xreffont 73}]}) ``share many common properties''?%
\end{paragraphs}%
\begin{paragraphs}{The Euler product.}{rhissues_other-4-8}%
The standard answer to the above question (as noted in~\hyperlink{Iv2}{[{\xreffont 73}]}) is that the functions are fundamentally different because \(L(s,\chi_{5,2})\) is an \(L\)-function, but \(F_{DH}(s)\) is not \textemdash{} because it does not have an Euler product. It is an exercise to check that every axiom in the Selberg class~\hyperlink{Sel}{[{\xreffont 104}]} is essential: omit any one of them and there will be functions which do not satisfy~RH. The Euler product condition is not more important than the other axioms, but it is important, and without it one has no expectation of~RH.%
\par
The Deuring-Heilbronn function is one of many examples constructed by taking linear combinations of \(L\)-functions.  It is an exercise to show that a nontrivial linear combination of Euler products is not an Euler product.  Thus, any such linear combination is not expected to satisfy~RH, even if the constituent functions do satisfy~RH.  This explains why examples like \(F_{DH}(s)\) do not cast doubt on RH. However, there is a bit more to be said if we consider the locations of the zeros of those functions.%
\end{paragraphs}%
\begin{paragraphs}{Zeros outside the critical strip.}{rhissues_other-4-9}%
The Deuring-Heilbronn function failed RH in the worst possible way: it has zeros in the region where the Dirichlet series converges absolutely. In fact it has \(\gg T\) zeros in \(\sigma \gt 1\) up to height~\(T\), but that is not any extra information because Dirichlet series are almost periodic functions (of \(t\)) in \(\sigma \gt 1\): so once there is one zero in that region, there must be \(\gg T\) zeros in that region. Those zeros in \(\sigma \gt 1\) are a general phenomenon, so we obtain a refinement of \hyperref[prin_lin_comb]{Principle~{\xreffont\ref{prin_lin_comb}}}:%
\begin{principle}{Principle}{}{}{prin_DH}%
A nontrivial linear combination of \(L\)-functions will have infinitely many zeros in~\(\sigma > 1\).%
\end{principle}
\hyperref[prin_DH]{Principle~{\xreffont\ref{prin_DH}}} is a theorem for Dirichlet \(L\)-functions~\hyperlink{Saias}{[{\xreffont 99}]}, for automorphic \(L\)-functions~\hyperlink{Booker}{[{\xreffont 26}]}, and in an axiomatic setting~\hyperlink{Rig}{[{\xreffont 95}]}. Those results are interesting because one has a vector space of Dirichlet series, all of which satisfy the same functional equation. Conjecturally only the functions in the span of individual basis elements (i.e.\@~the actual \(L\)-functions) satisfy RH, yet every other element of the space fails RH in the worst possible way. This highlights the key role of the Euler product.%
\par
Why exactly is the Euler product so important?  Maybe it isn't. In terms of the analytic properties of the \(L\)-function, the most obvious consequence is that the Euler product prevents the \(L\)-function from having any zeros in~\(\sigma \gt 1\). So, is it the Euler product that matters, or the lack of zeros in~\(\sigma \gt 1\)?%
\end{paragraphs}%
\end{subsectionptx}
\end{sectionptx}
\typeout{************************************************}
\typeout{Section 14 Discussion of the Mistaken Notions}
\typeout{************************************************}
\begin{sectionptx}{Section}{Discussion of the Mistaken Notions}{}{Discussion of the Mistaken Notions}{}{}{misleadingrevisited}
\begin{introduction}{}%
We revisit the topics in \hyperref[misleading]{Section~{\xreffont\ref{misleading}}} in the context of the Principles. Specifically, we re-examine the mistaken notions that the largest values of the \(\zeta\)-function arise from particularly large zero gaps, that those large values occur when many of  the initial terms in the Riemann-Siegel formula have the same sign, that large values are places to look for possible counterexamples to RH, and that the Gram points are special.%
\end{introduction}%
\typeout{************************************************}
\typeout{Subsection 14.1 Large values and large gaps}
\typeout{************************************************}
\begin{subsectionptx}{Subsection}{Large values and large gaps}{}{Large values and large gaps}{}{}{misleadingrevisited-3}
\hyperref[mistakegap]{Mistaken Notion~{\xreffont\ref{mistakegap}}}, equating the largest values of the \(\zeta\)-function with the largest gaps, is the core idea for three of the Notions.  As has already been adequately addressed by the discussion of carrier waves, see \hyperref[prin_carrier]{Principle~{\xreffont\ref{prin_carrier}}}, that Notion does not accurately describe the typical large values, nor the truly largest values.%
\par
However, \hyperref[mistakegap]{Mistaken Notion~{\xreffont\ref{mistakegap}}} \emph{does} describe the behavior in the range of current computations, and sometimes a wrong idea can lead to the right answer. Kotnik~\hyperlink{kotnik}{[{\xreffont 74}]} sought computational evidence that the known \(\Omega\)-result \hyperref[eqn_Omega]{({\xreffont\ref{eqn_Omega}})} was not the true order of growth. The method was to find the sequence of largest values of \(\abs{\zeta(\frac12 + it)}\) for small~\(t\).  Those computations did make it appear plausible that the true order of growth is faster than the \(\Omega\)-result. Since those large values arise from large gaps, the computations were not revealing the true order of growth.  But by \hyperref[prin_large_Z_gaps]{Principle~{\xreffont\ref{prin_large_Z_gaps}}}, the maxima in the large gaps should be \(\gg e^{c\sqrt{\log t}}\), which is larger than the current best \(\Omega\)-result.%
\end{subsectionptx}
\typeout{************************************************}
\typeout{Subsection 14.2 Finding large values}
\typeout{************************************************}
\begin{subsectionptx}{Subsection}{Finding large values}{}{Finding large values}{}{}{misleadingrevisited-4}
\hyperref[mistakeRS]{Mistaken Notion~{\xreffont\ref{mistakeRS}}} asserts that the largest values of the \(\zeta\)-function occur when many initial terms in the Riemann-Siegel formula have the same sign.  The Notion makes sense: the first few terms are the largest, so if you want the entire sum to be large, it is efficient to focus on the initial terms. That argument ignores the fact that there are \(\gg t^{\frac12}\) terms of size \(\gg t^{-\frac14}\). Even a small bias in such a large number of terms could contribute much more than the initial terms.  Indeed, one of the arguments for the conjectured maximum size of the \(\zeta\)-function is based on a random model for the tail of that sum, which is larger than can be obtained from a strong bias in the initial terms.%
\par
Nevertheless, \hyperref[mistakeRS]{Mistaken Notion~{\xreffont\ref{mistakeRS}}} is how the largest known values of the \(\zeta\)-function have been found~\hyperlink{BobHia}{[{\xreffont 21}]}\hyperlink{Tih2}{[{\xreffont 109}]}.  It is possible to take that idea too far.  Consider%
\begin{equation}
F(t, g(t)) = 2\sum_{n \lt g(t)}
n^{-\frac12} \cos(\theta(t) - t\log n)\text{.}\label{misleadingrevisited-4-3-6}
\end{equation}
The Riemann-Siegel formula has \(g(t) = \sqrt{t/2\pi}\). One might expect that \(F(t, t^\varepsilon)\) is a good approximation for the large values of \(Z(t)\) because (assuming the \hyperref[conjLH]{Lindelöf Hypothesis}) a similarly short sum is a good approximation to \(\zeta(s)\) in~\(\sigma \gt \frac12\).  Kotnik~\hyperlink{kotnik}{[{\xreffont 74}]} considered \(F(t, t^\delta)\) for \(\delta = \frac13\) and \(\delta=\frac14\). Those functions are faster to evaluate than the Riemann-Siegel formula, and this sped up the computations by more quickly locating regions where \(Z(t)\) was more likely to be large.%
\par
Tihanyi \hyperlink{Tih1}{[{\xreffont 108}]} took it a step too far by considering \(F(t, \log t/2\pi)\).  Since \(F(t, g(t)) \ll \sqrt{\mathstrut g(t)}\), Tihanyi's approximation has no hope of capturing the largest values.  Yet, using \(F(t, \log t/2\pi)\) as a filter (the same strategy as Kotnik~\hyperlink{kotnik}{[{\xreffont 74}]}) turned out to be useful for finding many points where \(Z(t)\) was larger than \(1000\), and in subsequent work~\hyperlink{Tih2}{[{\xreffont 109}]}, found a value of \(Z(t)\) even larger than that found by Bober and Hiary~\hyperlink{BobHia}{[{\xreffont 21}]}.  So, a Notion may be wrong or misleading, yet still serve as the seed for a new discovery.%
\par
Although it is out of reach to compute in regions  where the carrier wave, and not the local zero gaps, provide the main contribution to the size of \(Z(t)\), it would still be of theoretical interest to find heuristics for locating truly large values.%
\end{subsectionptx}
\typeout{************************************************}
\typeout{Subsection 14.3 Trying to refute RH}
\typeout{************************************************}
\begin{subsectionptx}{Subsection}{Trying to refute RH}{}{Trying to refute RH}{}{}{howtorefute}
If RH were not true, how would one go about searching for a pair of zeros off the line? \hyperref[mistakeRH]{Mistaken Notion~{\xreffont\ref{mistakeRH}}} suggests that a large gap might ``push some zeros off the line''. That is contrary to the idea behind \hyperref[prin_minimal_unitary]{Principle~{\xreffont\ref{prin_minimal_unitary}}}: random unitary polynomials can have large zero gaps \textemdash{} conjecturally with the same size and frequency as the \(\zeta\)-function \textemdash{} which influence the nearby gaps but do not destroy the fact that the polynomial is unitary.%
\par
We offer two more arguments against \hyperref[mistakeRH]{Mistaken Notion~{\xreffont\ref{mistakeRH}}}.  The first is similar to an idea used in the discussion of carrier waves in \hyperref[waves]{Section~{\xreffont\ref{waves}}}: run the argument in reverse.  Suppose there is a pair of zeros off the line. Would one expect there to be a particularly large gap nearby?  Surely not. That pair off the line, if they were not too far from the line, would only cause a slightly larger than average gap in zeros on the critical line, and furthermore the function would be small (not large) within that zero gap.%
\par
The second argument concerns the fact that no plausible suggestion has been made for how RH would fail if it were false.  Maybe the zeros in \(\sigma \gt \frac12\), if they exist, all lie on \(\sigma = \frac34\)?  Is that a particularly absurd suggestion?  Zeros that far from the line have a very small influence on the behavior of \(Z(t)\) \textemdash{} in particular there is no associated large value or large zero gap.  It is perfectly reasonable to seek fame and fortune by trying to computationally disprove RH.  But searching for zeros off the line should be accompanied by, and tailored to, a model for how the failure will occur. A scenario in which RH fails, and those failures are typically associated to a large zero gap, is arguably even less plausible than having  all non-critical zeros on \(\sigma = \frac34\).%
\par
The carrier wave frequently causes the \(Z\)-function to stay very small over a wide span of zeros.  In that region a slight perturbation would cause many zeros to fall off the critical line. Maybe attempts to computationally disprove RH should begin by looking for regions where the initial terms in the Riemann-Siegel formula mostly cancel to nothing? %
\end{subsectionptx}
\typeout{************************************************}
\typeout{Subsection 14.4 Gram's law, as it was originally formulated}
\typeout{************************************************}
\begin{subsectionptx}{Subsection}{Gram's law, as it was originally formulated}{}{Gram's law, as it was originally formulated}{}{}{gramslawrevisited}
\hyperref[misleadinggramslaw]{Mistaken Notion~{\xreffont\ref{misleadinggramslaw}}} contains two statements: that Gram points are special, and that Gram's law is helpful for understanding properties of the zeros.  First we address Gram's law.  For reasons why Gram points are not special, see~\hyperref[grampointsrevisited]{Subsection~{\xreffont\ref{grampointsrevisited}}}.%
\par
Gram's law, as usually stated, misses a key point of Gram's original observation. What Gram noted is that the zeros and the Gram points ``separate each other''. In other words, in the range computed by Gram not only does the \(n\)th Gram interval \((g_n, g_{n+1})\) contain exactly one zero, that zero is the \(n+2\)nd zero. (The \(+2\) arises because the definition of Gram point has \(g_0 \approx 17.84\).) We propose:%
\begin{heuristic}{Reformulation}{The Precise Gram's Law.}{}{gramslawrevisited-4}%
The \terminology{precise Gram's law holds} for the \terminology{Gram interval} \((g_m, g_{m+1})\) if that interval contains the \(m+2\)nd zero of the \(Z\)-function and no other zeros.%
\end{heuristic}
That version of Gram's law also follows from \hyperref[pre_grams_law]{Principle~{\xreffont\ref{pre_grams_law}}}.%
\par
Gram's observation was based on the first \(15\) zeros, but note that he suggested the interlacing would not continue to hold indefinitely. In fact, Gram's law (either version) fails after the \(126\)th zero, \(\gamma_{126} \approx 279.2\),  because there is a large zero gap and the \(127\)th zero falls in the \(126\)th Gram interval. The Gram interval \((g_{3357},g_{3358})\) contains exactly one zero, but the precise Gram's law fails because the previous interval contains two zeros.  That is the first time there is a difference between the two versions of the law.  There are \(16\) differences among the first \(10,000\) zeros.%
\par
That the precise Gram's law holds initially can be seen either as a consequence of \(S(t)\) being small, or that the first term in the Riemann-Siegel formula is providing a good approximation for the zeros.  A similarity between those two reasons is that the sign and the \(\Gamma\)-factor in the functional equation give rise both to the first term in the Riemann-Siegel formula, and also to the main part of the counting function of the zeros. The Dirichlet series gives rise to the other terms, and also to \(S(t)\).%
\par
However, we suggest that \(S(t)\) provides a more fundamental insight into the behavior of the \(\zeta\)-function and its zeros. A large height, the Riemann-Siegel formula needs more terms, but it is not immediately obvious that this causes the precise Gram's law to fail most of the time. On the other hand, at large height \(\abs{S(t)}\) typically is big, which implies that the precise Gram's law holds rarely:%
\begin{observation}{Observation}{}{gramslawrevisited-9}%
A necessary (but not sufficient) condition for the precise Gram's law to hold on a Gram interval \((g_n, g_{n+1})\) is that \(\abs{S^+(g_n)} \le 1\).%
\end{observation}
The frequency of that occurrence at height \(t\) can be estimated using  Selberg's CLT. It is clear that in the limit it occurs 0\% of the time. The slow growth of \(S(t)\) makes this effect difficult to observe:  even for characteristic polynomials of random matrices in \(U(1000)\), as in the examples in \hyperref[rmtwaves]{Section~{\xreffont\ref{rmtwaves}}}, the precise Gram's law still holds more than 40\% of the time.%
\par
The fact that Gram's law (the non-precise version) holds around 66\% of the time should be viewed as an aspect of the random matrix model of zeros of the \(\zeta\)-function: the normalized nearest neighbor spacing of zeros\slash{}eigenvalues, see the leftmost plot in \hyperref[nn3]{Figure~{\xreffont\ref{nn3}}}, is concentrated around~\(1\). So it is reasonably likely that \emph{any} interval of normalized length~\(1\) contains exactly one zero\slash{}eigenvalue (although at low height a Gram interval is more likely than a random interval, see \hyperlink{HH}{[{\xreffont 62}]} or apply \hyperref[pre_grams_law]{Principle~{\xreffont\ref{pre_grams_law}}}).%
\par
The precise formulation of Gram's law describes a rigidity which only holds initially.%
\begin{principle}{Principle}{The sloshing water model.}{}{sloshing_water_model}%
At larger heights the zeros \terminology{slosh back-and-forth}.  That term is meant to evoke what happens when one tries to walk while carrying a wide but shallow pan of water. Large groups of zeros are shifted to the left or the right of their expected location, even though locally all sets of zeros behave similarly.%
\end{principle}
Because of the sloshing, \(S(t)\) is large (in absolute value) most of the time.  Long runs of zeros miss their intended Gram intervals: many in a row falling to the right, and later, many in a row falling to the left. (But locally the zeros are still obeying random matrix statistics, so a Gram interval (or any other interval of that length) will contain exactly one zero more than 66\% of the time.) The \(Z\)-function is wiggling with great amplitude for a while, and then wiggling with small amplitude.  The size of the carrier wave is independent of whether the nearby zeros arrive before or after their Gram interval, that is, independent of the size or the sign of \(S(t)\). All of these phenomena are manifestations of the density wave, a wave which cannot exist if the precise Gram's law were true too often.%
\begin{problem}{Problem}{}{understandwaves}%
Develop a random model for the density wave, equivalently the carrier wave, preferably as a theorem in the case of the \(\CUE\).%
\end{problem}
A good solution to that problem will allow generating examples from systems far larger than possible if one must first produce a large random matrix and then find its eigenvalues.%
\par
A solution to that problem must grapple with the issue that the definition of ``density wave'' depends on the range over which the density is being measured. A solution to that problem, along the lines of directly creating a density wave as in \hyperref[fig_density1000]{Figure~{\xreffont\ref{fig_density1000}}}, could be a step toward the next level of understanding the behavior of the \(\zeta\)-function a large height.  By that we mean:  at the first level we recognize that the local distribution of zeros is random; that randomness is now understood.  This is occurs in both the \(L\)-function and the characteristic polynomial worlds.  The next level is carrier\slash{}density waves.  This is beyond the reach of the \(L\)-function world, and observable but not understood in the random matrix world. Once carrier waves are understood, as in \hyperref[understandwaves]{Problem~{\xreffont\ref{understandwaves}}}, then it is natural to ask whether those waves are sitting in a yet larger structure. The analogy looks like%
\begin{equation}
\text{local zero spacing}:\text{carrier wave}\ ::\ \text{carrier wave}:\text{?????}\text{.}\label{gramslawrevisited-17-7}
\end{equation}
If one could directly produce density\slash{}carrier waves, then perhaps those can be generated without needing to first directly produce an enormous random matrix and then compute its eigenvalues. This may allow one to exhibit interesting (self similar?) structures, as mentioned in \hyperref[smallwave]{Subsection~{\xreffont\ref{smallwave}}}.%
\end{subsectionptx}
\typeout{************************************************}
\typeout{Subsection 14.5 Gram points are of transient interest}
\typeout{************************************************}
\begin{subsectionptx}{Subsection}{Gram points are of transient interest}{}{Gram points are of transient interest}{}{}{grampointsrevisited}
A recent paper of Shanker \hyperlink{shanker}{[{\xreffont 105}]} used computational data for the \(\zeta\)-function to make two conjectures about Gram intervals, and to suggest that ``Gram points have interesting properties which distinguish them from random points on the critical line''.  We examine that work in the context of the Principles, reaching a different conclusion.%
\par
The conjectures in \hyperlink{shanker}{[{\xreffont 105}]} are:%
\begin{enumerate}
\item{}If \(\{g_n\}\) are the Gram points, then the distribution of \(Z(g_{2n})\) is the negative of the distribution of \(Z(g_{2n+1})\).%
\item{}The properties of the \(Z\)-function on a set of consecutive Gram points is the same if that set is reversed.%
\par
For example, the probability that the number of zeros between consecutive Gram points is \(3, 1, 4, 1, 5, 9\) is the same as the probability that the number of zeros is \(9, 5, 1, 4, 1, 3\).%
\end{enumerate}
We will see that both conjectures are true for random unitary matrices, therefore both conjectures are immediate consequences of the Keating-Snaith Law. %
\par
The second part of \hyperlink{shanker}{[{\xreffont 105}]} uses two sets of data to suggest that Gram points are special.  Say that a Gram point \(g_n\) is \terminology{good} if \((-1)^n Z(g_n) \gt 0\) and \terminology{bad} otherwise. \hyperref[shankertab1]{Table~{\xreffont\ref{shankertab1}}} contains data from Table~2 in  \hyperlink{shanker}{[{\xreffont 105}]}.%
\begin{tableptx}{Table}{\textbf{The proportion of good and bad Gram intervals containing \(0, 1, 2\), or \(3\) zeros, and the combined proportion ignoring whether the interval is good or bad. Data from \(10^7\) zeros near \(t=10^{28}\).  The bottom row is the limiting prediction from the GUE Hypothesis (Principle~{\xreffont\ref*{guehypothesis}}).}}{shankertab1}{}%
\centering%
{\tabularfont%
\begin{tabular}{rrrrrr}
\multicolumn{1}{c}{{\bfseries{}}}&\multicolumn{1}{c}{{\bfseries{}\(m=0\)}}&\multicolumn{1}{c}{{\bfseries{}\(1\)}}&\multicolumn{1}{c}{{\bfseries{}\(2\)}}&\multicolumn{1}{c}{{\bfseries{}\(3\)}}&\multicolumn{1}{c}{{\bfseries{}size}}\tabularnewline\hrulethin
good&0.1097&0.7829&0.1071&0.00016&7373998\tabularnewline[0pt]
bad&0.3083&0.3838&0.3006&0.00709&2626002\tabularnewline[0pt]
all&0.1619&0.6781&0.1579&0.00198&10000000\tabularnewline[0pt]
GUE&0.1702&0.6614&0.1664&0.00186&
\end{tabular}
}%
\end{tableptx}%
The second set of data concerns consecutive Gram intervals which contain either \(2,1,\ldots,1,0\) or \(0,1,\ldots,1,2\) zeros.  Those \terminology{Gram blocks} are called \terminology{Type~I} and \terminology{Type~II}, respectively. As mentioned above, Type~I and Type~II, of the same length, occur with equal probability.  However, what is considered in \hyperlink{shanker}{[{\xreffont 105}]} are intervals which are shifted from Gram intervals by \(k\delta\) where \(\delta\) is the length of a Gram interval and \(k \in \{-0.2, -0.1, 0, 0.1, 0.2\}\). \hyperref[shankertab2]{Table~{\xreffont\ref{shankertab2}}} contains part of Table~5 from~\hyperlink{shanker}{[{\xreffont 105}]}.%
\begin{tableptx}{Table}{\textbf{The ratio Type~II\slash{}Type~I intervals for displaced Gram points, for \(10^7\) zeros near \(t=10^{28}\).}}{shankertab2}{}%
\centering%
{\tabularfont%
\begin{tabular}{rrrrrr}
\multicolumn{1}{c}{{\bfseries{}Length}}&\multicolumn{1}{c}{{\bfseries{}\(-0.2\delta\)}}&\multicolumn{1}{c}{{\bfseries{}\(-0.1\delta\)}}&\multicolumn{1}{c}{{\bfseries{}\(0\)}}&\multicolumn{1}{c}{{\bfseries{}\(0.1\delta\)}}&\multicolumn{1}{c}{{\bfseries{}\(0.2\delta\)}}\tabularnewline\hrulethin
\multicolumn{1}{c}{2}&2.268&1.504&0.999&0.663&0.441\tabularnewline[0pt]
\multicolumn{1}{c}{3}&3.624&1.896&0.998&0.526&0.274\tabularnewline[0pt]
\multicolumn{1}{c}{4}&5.588&2.367&1.001&0.426&0.178\tabularnewline[0pt]
\multicolumn{1}{c}{5}&8.849&2.923&1.011&0.343&0.115\tabularnewline[0pt]
\multicolumn{1}{c}{6}&14.373&3.728&1.008&0.266&0.070
\end{tabular}
}%
\end{tableptx}%
The third conjecture in \hyperlink{shanker}{[{\xreffont 105}]} is that in each row of \hyperref[shankertab2]{Table~{\xreffont\ref{shankertab2}}}, the product of the entries for \(k\delta\) and \(-k\delta\) equals~\(1\). For characteristic polynomials, that conjecture is immediate from the formula for Haar measure; see below for a more complete explanation.%
\par
The numbers \hyperref[shankertab1]{Table~{\xreffont\ref{shankertab1}}} were used to suggest that, while the behavior of all Gram points seems close to the limiting GUE prediction, the difference between the good and bad intervals casts doubt on the random matrix prediction. The numbers in \hyperref[shankertab2]{Table~{\xreffont\ref{shankertab2}}} were used to suggest that the Gram points are special, distinguishing them from typical points having the same spacing. We will reach different conclusions by invoking the Keating-Snaith Law and performing similar experiments with random unitary polynomials.%
\begin{paragraphs}{Gram point for unitary polynomials.}{grampointsrevisited-10}%
Recall that Gram points for the \(\zeta\)-function are the extrema of the first term in the Riemann-Siegel formula~\hyperref[RiemannSiegel]{({\xreffont\ref{RiemannSiegel}})}. As described in \hyperlink{HH}{[{\xreffont 62}]} we define the \terminology{Gram points} for a unitary polynomial in an analogous way.  Recall that \(\mathcal Z_A(\theta)\), defined in \hyperref[RMTZ]{({\xreffont\ref{RMTZ}})}, is real for \(\theta\in \R\).  Therefore there exists \(\theta_A\) such that%
\begin{equation}
\mathcal Z_A(\theta) = 2 \cos\Bigl(\frac{N}{2} (\theta - \theta_A)\Bigr)
+ \sum_{0\le n \lt N/2} a_n \cos(n (\theta - \theta_A))\text{,}\label{RSrmt}
\end{equation}
with \(a_n\in \R\) and where for convenience we have assumed \(N\) is even. The leading term is analogous to the leading term \(2\cos(\theta(t))\) in the Riemann-Siegel formula, so we define the \terminology{Gram points} for the matrix \(A\) to be the points where the leading term in \hyperref[RSrmt]{({\xreffont\ref{RSrmt}})} has a maximum or minimum: \((\theta_A + 2\pi n/N)_{n=0,1,2,\ldots}\). Note that if \(\det A = 1\) then \(\theta_A=0\). See \hyperlink{HH}{[{\xreffont 62}]} for a more detailed discussion.%
\par
Now it is straightforward to explain the first two conjectures in \hyperlink{shanker}{[{\xreffont 105}]}. The first conjecture follows from \hyperref[hardyZpm]{Principle~{\xreffont\ref{hardyZpm}}}. The first two  conjectures are immediate from the Keating-Snaith Law and \hyperref[cbetaesymmetries]{Lemma~{\xreffont\ref{cbetaesymmetries}}}, using the above definition of Gram point.%
\par
The third conjecture, concerning the statistics for a sequence of Gram intervals shifted by \(k\delta\) and \(-k\delta\) follows from \hyperref[cbetaesymmetries]{Lemma~{\xreffont\ref{cbetaesymmetries}}} because any statistic for a sequence of Gram intervals shifted by \(k\delta\) is formally the same as the reverse sequence shifted by \(-k\delta\).%
\par
We now address the larger issue of interpreting the data in \hyperref[shankertab1]{Table~{\xreffont\ref{shankertab1}}} and \hyperref[shankertab2]{Table~{\xreffont\ref{shankertab2}}}.%
\par
By the Keating-Snaith Law, we match the size of the matrices using the rule \(N=\log(t/2\pi)\). In the context of the data from  \hyperlink{shanker}{[{\xreffont 105}]}, this means that the appropriate matrix size is \(N=62\). \hyperref[goodbadrmt]{Table~{\xreffont\ref{goodbadrmt}}} and \hyperref[deltagramrmt]{Table~{\xreffont\ref{deltagramrmt}}} show the results from \(10^7\) random eigenvalues from matrices in \(U(N)\) for \(N=62\), \(250\), and \(1000\), analogous to \hyperref[shankertab1]{Table~{\xreffont\ref{shankertab1}}} and \hyperref[shankertab2]{Table~{\xreffont\ref{shankertab2}}}, respectively%
\begin{tableptx}{Table}{\textbf{The proportion of good and bad Gram intervals containing \(0, 1, 2\), or \(3\) zeros, and the combined proportion ignoring whether the interval is good or bad. Data from \(10^7\) eigenvalues of Haar-random matrices in \(U(N)\) for \(N=62\), \(250\), and \(1000\). }}{goodbadrmt}{}%
\centering%
{\tabularfont%
\begin{tabular}{rrrrrrr}
\multicolumn{1}{c}{{\bfseries{}\(N\)}}&\multicolumn{1}{c}{{\bfseries{}}}&\multicolumn{1}{c}{{\bfseries{}\(m=0\)}}&\multicolumn{1}{c}{{\bfseries{}\(1\)}}&\multicolumn{1}{c}{{\bfseries{}\(2\)}}&\multicolumn{1}{c}{{\bfseries{}\(3\)}}&\multicolumn{1}{c}{{\bfseries{}size}}\tabularnewline\hrulethin
\multicolumn{1}{c}{62}&good&0.1111&0.7802&0.1083&0.00021&7295392\tabularnewline[0pt]
&bad&0.2996&0.4009&0.2924&0.00687&2704608\tabularnewline[0pt]
&all&0.1621&0.6776&0.1581&0.00201&10000000\tabularnewline\hrulethin
\multicolumn{1}{c}{250}&good&0.1269&0.7484&0.1241&0.00050&6621923\tabularnewline[0pt]
&bad&0.2488&0.5031&0.2433&0.00461&3378077\tabularnewline[0pt]
&all&0.1681&0.6655&0.1643&0.00189&10000000\tabularnewline\hrulethin
\multicolumn{1}{c}{1000}&good&0.1377&0.7262&0.1351&0.00081&6147041\tabularnewline[0pt]
&bad&0.2205&0.5610&0.2148&0.00356&3852959\tabularnewline[0pt]
&all&0.1696&0.6625&0.1658&0.00187&10000000
\end{tabular}
}%
\end{tableptx}%
\begin{tableptx}{Table}{\textbf{The ratio Type~II\slash{}Type~I intervals for displaced Gram points, among \(10^7\) eigenvalues of Haar-random matrices in \(U(N)\) for \(N=62\), \(250\), and \(1000\).}}{deltagramrmt}{}%
\centering%
{\tabularfont%
\begin{tabular}{rrrrrrrr}
\multicolumn{1}{c}{{\bfseries{}\(N\)}}&\multicolumn{1}{c}{{\bfseries{}~~}}&\multicolumn{1}{c}{{\bfseries{}Length}}&\multicolumn{1}{c}{{\bfseries{}\(-0.2\delta\)}}&\multicolumn{1}{c}{{\bfseries{}\(-0.1\delta\)}}&\multicolumn{1}{c}{{\bfseries{}\(0\)}}&\multicolumn{1}{c}{{\bfseries{}\(0.1\delta\)}}&\multicolumn{1}{c}{{\bfseries{}\(0.2\delta\)}}\tabularnewline\hrulethin
\multicolumn{1}{c}{62}&~~&\multicolumn{1}{c}{2}&2.245&1.498&1.001&0.670&0.448\tabularnewline[0pt]
&~~&\multicolumn{1}{c}{3}&3.499&1.876&0.998&0.534&0.288\tabularnewline[0pt]
&~~&\multicolumn{1}{c}{4}&5.048&2.257&1.003&0.444&0.198\tabularnewline[0pt]
&~~&\multicolumn{1}{c}{5}&7.372&2.732&1.011&0.372&0.138\tabularnewline[0pt]
&~~&\multicolumn{1}{c}{6}&9.756&3.155&0.994&0.318&0.100\tabularnewline\hrulethin
\multicolumn{1}{c}{250}&~~&\multicolumn{1}{c}{2}&1.730&1.322&1.002&0.762&0.579\tabularnewline[0pt]
&~~&\multicolumn{1}{c}{3}&2.216&1.500&1.005&0.667&0.454\tabularnewline[0pt]
&~~&\multicolumn{1}{c}{4}&2.673&1.638&0.998&0.611&0.373\tabularnewline[0pt]
&~~&\multicolumn{1}{c}{5}&3.221&1.812&1.011&0.560&0.318\tabularnewline[0pt]
&~~&\multicolumn{1}{c}{6}&3.683&1.987&1.018&0.524&0.273\tabularnewline\hrulethin
\multicolumn{1}{c}{1000}&~~&\multicolumn{1}{c}{2}&1.484&1.218&0.995&0.816&0.672\tabularnewline[0pt]
&~~&\multicolumn{1}{c}{3}&1.750&1.337&1.000&0.749&0.570\tabularnewline[0pt]
&~~&\multicolumn{1}{c}{4}&2.013&1.432&1.993&0.705&0.502\tabularnewline[0pt]
&~~&\multicolumn{1}{c}{5}&2.148&1.447&0.982&0.664&0.451\tabularnewline[0pt]
&~~&\multicolumn{1}{c}{6}&2.424&1.566&1.024&0.661&0.436
\end{tabular}
}%
\end{tableptx}%
\end{paragraphs}%
\begin{paragraphs}{The data are consistent, therefore Gram points are not important.}{grampointsrevisited-11}%
\hyperref[shankertab1]{Table~{\xreffont\ref{shankertab1}}} and \hyperref[goodbadrmt]{Table~{\xreffont\ref{goodbadrmt}}} both show that the good and bad gram points behave differently, and the proportions in each table are similar.  However, this is not evidence against the prediction that the distinction between good and bad gram points is meaningless in the long run. Indeed, a matrix size of \(N=62\) is small, particularly when some of the limiting behavior reveals itself on the scale of \(\sqrt{\log N}\). The \(N=250\) and \(1000\) sections of \hyperref[goodbadrmt]{Table~{\xreffont\ref{goodbadrmt}}} provide a nice illustration of the fact that, as \(N\to\infty\), the good and bad intervals become equally likely. As described in \hyperref[sloshing_water_model]{Principle~{\xreffont\ref{sloshing_water_model}}}, at large height most zeros have slid far away from their expected location, so it is meaningless to try to separate ``good'' from ``bad''.  Yet the local spacing between zeros still looks the same everywhere, and that is the real reason that Grams law (but not the precise Grams law) continues to hold more than 66\% of the time.%
\par
\hyperref[deltagramrmt]{Table~{\xreffont\ref{deltagramrmt}}} shows that at \(N=62\), shifting the Gram points has a significant effect.  But \(62\) is a small number, and the \(N=250\) and \(1000\) cases make it less surprising that as \(N\to \infty\) every entry in that table will approach~\(1\).%
\end{paragraphs}%
\begin{paragraphs}{The data differ significantly.}{grampointsrevisited-12}%
We have argued that the numbers in \hyperref[shankertab1]{Table~{\xreffont\ref{shankertab1}}} and the \(N=62\) portion of \hyperref[goodbadrmt]{Table~{\xreffont\ref{goodbadrmt}}} are similar, but they also differ significantly.  The sample sizes are \(10^7\), so the respective data differ by more than 10 standard deviations. This is a consequence of \hyperref[KSoverestimate]{Principle~{\xreffont\ref{KSoverestimate}}}.  The lower order terms mentioned in that principle can be (statistically) significant, particularly when we are sampling at a small matrix size such as \(N=62\). Note that the differences are in the direction indicated by that principle: for the random matrices larger gaps and clusters of zeros are more likely.%
\par
The differences between \hyperref[shankertab2]{Table~{\xreffont\ref{shankertab2}}} and the \(N=62\) portion of \hyperref[deltagramrmt]{Table~{\xreffont\ref{deltagramrmt}}} are even more pronounced. The differences are greatest for the unlikely events:  there are \(10^7\) data points, but the lowest row of each table corresponding to \(\text{length} = 6\) only occurs a few thousand times in the sample. So, those apparent discrepancies are also explained by the principle that for random matrices there are secondary terms which make extreme events more likely.%
\end{paragraphs}%
\end{subsectionptx}
\end{sectionptx}
\typeout{************************************************}
\typeout{Section 15 The chicken or the egg?}
\typeout{************************************************}
\begin{sectionptx}{Section}{The chicken or the egg?}{}{The chicken or the egg?}{}{}{root-1-2-18}
Which came first: the \(\zeta\)-function or its zeros? That sounds like a meaningless question, but we suggest that the answer factors into whether or not one is skeptical of RH.%
\par
Suppose you believe the function is what matters, either the \(\zeta\)-function given by its Dirichlet series or some other expression, or the \(Z\)-function given by the Riemann-Siegel formula.  From that perspective, the zeros are determined by the function, and it would take an unlikely conspiracy for RH to be true.  It might still bother you that RH is ``barely true''. The suggestion at the very end of \hyperref[howtorefute]{Subsection~{\xreffont\ref{howtorefute}}} might be viewed as an intriguing idea worth exploring. A~failure of RH could occur for accidental reasons.%
\par
Now suppose you believe it is the zeros which matter.  The zeros determine the function, with  the density wave setting the scale and the local arrangement providing lower-order adjustments. Every zero is important and meaningful: a purported list of zeros containing one tiny error would be detected (by Weil's explicit formula or some other means). If there were zeros off the line, they would be there to serve a specific purpose and their location would be meaningful. The suggestion at the end of \hyperref[howtorefute]{Subsection~{\xreffont\ref{howtorefute}}} is just silly:  the \(Z\)-function stays small over a wide region only because the density of zeros is higher.%
\par
It is hoped that this paper helps cultivate an appreciation for  the explanatory power which comes from viewing the zeros as the basic object.%
\end{sectionptx}
%% A lineskip in table of contents as a transition to the rest of the backmatter
\addtocontents{toc}{\vspace{\normalbaselineskip}}
\typeout{************************************************}
\typeout{References  Bibliography}
\typeout{************************************************}
\begin{references-section-numberless}{References}{Bibliography}{}{Bibliography}{}{}{bibliography}
%% If this is a top-level references
%%   you can replace the "referencelist" environment
%%   with a standard "thebibliography" environment
\begin{referencelist}
\bibitem[1]{ArBa1}\hypertarget{ArBa1}{}Louis-Pierre Arguin and Emma Bailey, \textit{Large Deviation Estimates of Selberg’s Central Limit Theorem and Applications}, IMRN no.\@\,23, \textbf{2023} (2023) pp.\@\,20574-20612.arXiv: 2202.06799; 
\bibitem[2]{ArBa2}\hypertarget{ArBa2}{}Louis-Pierre Arguin and Emma Bailey, \textit{Lower bounds for the large deviations of Selberg's central limit theorem}, Mathematika no.\@\,1, \textbf{71} (2025) e70002.arXiv: 2403:19803; 
\bibitem[3]{ABR1}\hypertarget{ABR1}{}Louis-Pierre Arguin, Paul Bourgade, and Maksym Radziwiłł, \textit{The Fyodorov-Hiary-Keating Conjecture. I}, arXiv: 2007.00988; (2020) 
\bibitem[4]{ABR2}\hypertarget{ABR2}{}Louis-Pierre Arguin, Paul Bourgade, and Maksym Radziwiłł, \textit{The Fyodorov-Hiary-Keating Conjecture. II}, arXiv: 2307.00982; (2023) 
\bibitem[5]{ADH}\hypertarget{ADH}{}Louis-Pierre Arguin, Guillaume Dubach, and Lisa Hartung, \textit{Maxima of a Random Model of the Riemann Zeta Function over Intervals of Varying Length}, Ann. Inst. Henri Poincaré Probab. Stat. no.\@\,1, \textbf{60} (2024) pp.\@\,174-200.arXiv: 2103.04817; 
\bibitem[6]{A}\hypertarget{A}{}F. V. Atkinson, \textit{The mean value of the zeta-function on the critical line}, Proc. London Math. Soc. no.\@\,2, \textbf{47} (1941) pp.\@\,174-200.
\bibitem[7]{Bac}\hypertarget{Bac}{}R. Backlund, \textit{Über die Beziehung zwischen Anwachsen und Nullstellen der Zetafunktion}, Öfversigt Finska Vetensk. Soc. no.\@\,9, \textbf{61} (1918-19) 
\bibitem[8]{BK1}\hypertarget{BK1}{}E.C. Bailey and J.P. Keating, \textit{On the moments of the moments of the characteristic polynomials of random unitary matrices}, Comm. Math. Phys. \textbf{371} no.\@\,2, (2019) pp.\@\,689-726.
\bibitem[9]{BK2}\hypertarget{BK2}{}E.C. Bailey and J.P. Keating, \textit{On the moments of the moments of \(\zeta(1/2+it\)}, J. Number Theory \textbf{223} (2021) pp.\@\,79-101.
\bibitem[10]{BaKe}\hypertarget{BaKe}{}E.C. Bailey and J.P. Keating, \textit{Maxima of log-correlated fields: some recent developments}, J. Phys. A: Math. Theor. \textbf{55} no.\@\,5, arXiv: 2106:15141; (2022) 
\bibitem[11]{BCNN}\hypertarget{BCNN}{}Yacine Barhoumi-Andréani, Christopher Hughes, Joseph Najnudel, and Ashkan Nikeghbali, \textit{On the number of zeros of linear combinations of independent characteristic polynomials of random unitary matrices}, Int. Math. Res. Not. IMRN no.\@\,23, (2015) pp.\@\,12366-12404.
\bibitem[12]{BA_B}\hypertarget{BA_B}{}Gérard Ben Arous and Paul Bourgade, \textit{Extreme gaps between eigenvalues of random matrices}, Ann. Probab. \textbf{41} no.\@\,4, (2013) pp.\@\,2648-2681.
\bibitem[13]{Berry}\hypertarget{Berry}{}M. V. Berry, \textit{Universal oscillations of high derivatives}, Proc. R. Soc. Lond. Ser. A Math. Phys. Eng. Sci. \textbf{461} no.\@\,2058, (2005) pp.\@\,1735-1751.
\bibitem[14]{BerrySO}\hypertarget{BerrySO}{}M. V. Berry, \textit{Superoscillations and leaky spectra}, J. Phys. A: Math. Theor. \textbf{52} (2019) pp.\@\,1-11.
\bibitem[15]{BerryFTZ}\hypertarget{BerryFTZ}{}M. V. Berry, \textit{Riemann zeros in radiation patterns: II. Fourier transforms of zeta}, J. Phys. A: Math. Theor. \textbf{48} (2015) 
\bibitem[16]{berryA}\hypertarget{berryA}{}M. V. Berry, \textit{Semiclassical formula for the number variance of the Riemann zeros}, Nonlinearity \textbf{1} (1988) pp.\@\,399-407.
\bibitem[17]{berrykeatingA}\hypertarget{berrykeatingA}{}M. V. Berry and J. P. Keating, \textit{The Riemann zeros and eigenvalue asymptotics}, SIAM Review \textbf{41} (1999) pp.\@\,236-266.
\bibitem[18]{Blanc1}\hypertarget{Blanc1}{}Philippe Blanc, \textit{A new reason for doubting the Riemann Hypothesis}, Exp. Math. \textbf{31} no.\@\,1, (2022) pp.\@\,88-92.10.1080\slash{}10586458.2019.1586600
\bibitem[19]{Blanc2}\hypertarget{Blanc2}{}Philippe Blanc, \textit{An unexpected property of odd order derivatives of Hardy's function}, Publ. Inst. Math. (Beograd) (N.S.) \textbf{95} no.\@\,109, (2014) pp.\@\,173-188.
\bibitem[20]{Blanc3}\hypertarget{Blanc3}{}Philippe Blanc, \textit{Optimal upper bound for the maximum of the \(k\)-th derivative of Hardy's function}, JNT \textbf{154} (2015) pp.\@\,105-117.
\bibitem[21]{BobHia}\hypertarget{BobHia}{}Jonathan Bober and Ghaith Hiary, \textit{New computations of the Riemann zeta function on the critical line}, Exp. Math. \textbf{27} no.\@\,2, (2018) pp.\@\,125-137.
\bibitem[22]{zetaomega}\hypertarget{zetaomega}{}Andriy Bondarenko and Kristian Seip, \textit{Extreme values of the Riemann zeta function and its argument}, Math. Ann. \textbf{372} no.\@\,3-4, (2018) pp.\@\,999-1015.
\bibitem[23]{millen}\hypertarget{millen}{}E. Bombieri, \textit{Problems of the Millennium: the Riemann Hypothesis}, https:\slash{}\slash{}www.claymath.org\slash{}wp-content\slash{}uploads\slash{}2022\slash{}05\slash{}riemann.pdf; 
\bibitem[24]{BomHej}\hypertarget{BomHej}{}E. Bombieri and D. Hejhal, \textit{On the distribution of zeros of linear combinations of Euler products}, Duke Math. J. \textbf{80} no.\@\,3, (1995) pp.\@\,821-862.
\bibitem[25]{BookerTuring}\hypertarget{BookerTuring}{}Andrew Booker, \textit{Artin's Conjecture, Turing's Method, and the Riemann Hypothesis}, Experiment. Math. \textbf{15} no.\@\,4, (2006) pp.\@\,385-408.
\bibitem[26]{Booker}\hypertarget{Booker}{}Andrew Booker and Frank Thorne, \textit{Zeros of L-functions outside the critical strip}, Algebra Number Theory \textbf{8} no.\@\,9, (2014) pp.\@\,2027-2042.
\bibitem[27]{brent}\hypertarget{brent}{}Richard P. Brent, \textit{On the zeros of the Riemann zeta function in the critical strip}, Math. Comp. \textbf{33} no.\@\,148, (1979) pp.\@\,1361-1372.
\bibitem[28]{BuFl}\hypertarget{BuFl}{}H. Bui and A. Florea, \textit{Negative moments of the Riemann zeta-function}, Journal für die reine und angewandte Mathematik (Crelle) no.\@\,806, (2024) pp.\@\,247-288.arXiv: 2302.07226; 
\bibitem[29]{con25}\hypertarget{con25}{}J. B. Conrey, \textit{More than two fifths of the zeros of the Riemann zeta function are on the critical line}, J. Reine Angew. Math. \textbf{399} (1986) pp.\@\,1-26.
\bibitem[30]{CF}\hypertarget{CF}{}J. B. Conrey and D. W. Farmer, \textit{Mean values of \(L\)-functions and symmetry}, Internat. Math. Res. Notices (2000) \textbf{17} pp.\@\,883-908.
\bibitem[31]{CFKRS}\hypertarget{CFKRS}{}J. B. Conrey, D. W. Farmer, J. P. Keating, M. O. Rubinstein, and N. C. Snaith, \textit{Autocorrelation of random matrix polynomials}, Commun. Math. Phys \textbf{237} (2003) no.\@\,3, pp.\@\,365-395.
\bibitem[32]{CFZ}\hypertarget{CFZ}{}J. B. Conrey, D. W. Farmer, and M. R. Zirnbauer, \textit{Autocorrelation of ratios of L-functions}, Commun. Number Theory Phys \textbf{2} (2008) no.\@\,3, pp.\@\,593-636.
\bibitem[33]{CG1}\hypertarget{CG1}{}J. B. Conrey and A. Ghosh, \textit{Mean values of the Riemann zeta-function}, Mathematika \textbf{31} (1984) pp.\@\,159-161.
\bibitem[34]{CG2}\hypertarget{CG2}{}J. B. Conrey and A. Ghosh, \textit{A conjecture for the sixth power moment of the Riemann zeta-function}, Int. Math. Res. Not. \textbf{15} (1998) pp.\@\,775-780.
\bibitem[35]{CG3}\hypertarget{CG3}{}J. B. Conrey and A. Ghosh, \textit{On the Selberg class of Dirichlet series: small degrees}, Duke Math. J. \textbf{72} (1993) pp.\@\,673-693.
\bibitem[36]{CoGo}\hypertarget{CoGo}{}J. B. Conrey and S. M. Gonek, \textit{High moments of the Riemann zeta-function}, Duke Math. Jour. (2001) \textbf{107} pp.\@\,577-604.
\bibitem[37]{CI}\hypertarget{CI}{}J. B. Conrey and H. Iwaniec, \textit{Spacing of zeros of Hecke L-functions and the class number problem}, Acta Arith. (2002) \textbf{103} no.\@\,3, pp.\@\,259-312.
\bibitem[38]{CS}\hypertarget{CS}{}J. B. Conrey and N. C. Snaith, \textit{Applications of the L-functions ratios conjectures}, Proc. Lond. Math. Soc. (2007) \textbf{94} no.\@\,3, pp.\@\,594-646.
\bibitem[39]{Cur}\hypertarget{Cur}{}M. J. Curran, \textit{Freezing transition and moments of moments of the Riemann zeta function}, Q. J. Math. (2024) \textbf{75} no.\@\,4, pp.\@\,1481-1505.
\bibitem[40]{DiaSha}\hypertarget{DiaSha}{}P. Diaconis and M. Shahshahani, \textit{On the eigenvalues of random matrices}, J. Appl. Probab. \textbf{31A} (1994) pp.\@\,49-62.
\bibitem[41]{Dia}\hypertarget{Dia}{}A. Diaconu, \textit{On the third moment of \(L(\tfrac12,\chi_d)\) I: The rational function field case}, J. Number Theory. \textbf{198} (2019) pp.\@\,1-42.
\bibitem[42]{DGH}\hypertarget{DGH}{}A. Diaconu, D. Goldfeld, and J. Hoffstein, \textit{Multiple Dirichlet series and moments of zeta- and \(L\)-functions}, Compositio Math. \textbf{139} (2003) no.\@\,3, pp.\@\,297-360.
\bibitem[43]{Dob}\hypertarget{Dob}{}Alexander Dobner, \textit{A proof of Newman's conjecture for the extended Selberg class}, Acta Arith. \textbf{201} no.\@\,1, pp.\@\,29-62.(2021) 
\bibitem[44]{Dy}\hypertarget{Dy}{}Freeman Dyson, \textit{A Brownian-motion model for the eigenvalues of a random matrix}, J. Mathematical Phys. \textbf{3} pp.\@\,1191-1198.(1962) 
\bibitem[45]{DE}\hypertarget{DE}{}Ioana Dumitriu and Alan Edelman, \textit{Matrix models for beta ensembles}, J. Math. Phys. \textbf{43} no.\@\,11, pp.\@\,5830-5847.(2002) 
\bibitem[46]{Fratios}\hypertarget{Fratios}{}David W. Farmer, \textit{Long mollifiers of the Riemann zeta-function}, Mathematika \textbf{40} no.\@\,1, pp.\@\,71-87.(1993) 
\bibitem[47]{jensen}\hypertarget{jensen}{}David W. Farmer, \textit{Jensen polynomials are not a plausible route to proving the Riemann Hypothesis}, Adv. Math. \textbf{411} (2022) 14 pages.Paper No. 108781https:\slash{}\slash{}arxiv.org\slash{}abs\slash{}2008.07206; 
\bibitem[48]{allreal}\hypertarget{allreal}{}David W. Farmer, \textit{When are the zeros of a polynomial distinct and real: a graphical view}, Preprint (2021) https:\slash{}\slash{}arxiv.org\slash{}abs\slash{}2010.15608; 
\bibitem[49]{FGH}\hypertarget{FGH}{}D.W. Farmer, S.M. Gonek, and C.P. Hughes, \textit{The maximum size of \(L\)-functions}, J. Reine Angew. Math. \textbf{609} (2007) pp.\@\,215-236.
\bibitem[50]{FMS}\hypertarget{FMS}{}David W. Farmer, Francesco Mezzadri, and Nina Snaith, \textit{Random polynomials, random matrices and L-functions}, Nonlinearity \textbf{19} (2006) no.\@\,4, pp.\@\,919-936.
\bibitem[51]{FPRS}\hypertarget{FPRS}{}David W. Farmer, Ameya Pitale, Nathan Ryan, and Ralf Schmidt, \textit{Analytic L-functions: definitions, theorems, and connections}, Bull. Amer. Math. Soc. (N.S.) \textbf{56} (2019) no.\@\,2, pp.\@\,261-280.
\bibitem[52]{FR}\hypertarget{FR}{}David W. Farmer and Robert Rhoades, \textit{Differentiation evens out zero spacings}, Trans. Amer. Math. Soc. \textbf{357} (2005) no.\@\,9, pp.\@\,3789-3811.
\bibitem[53]{FKLR}\hypertarget{FKLR}{}David W. Farmer, Sally Koutsoliotas, Stefan Lemurell, and David P. Roberts, \textit{Patterns in some landscapes of L-functions}, preprint; 
\bibitem[54]{feng}\hypertarget{feng}{}Shaoji Feng, \textit{Zeros of the Riemann zeta function on the critical line}, J. Number Theory \textbf{132} (2012) no.\@\,4, pp.\@\,511-542.
\bibitem[55]{FMN1}\hypertarget{FMN1}{}Valentin Féray, Pierre-Loïc Méliot, and Ashkan Nikeghbali, \textit{Mod-phi convergence I: Normality zones and precise deviations}, arXiv: 1304.2934; (2015) 
\bibitem[56]{FyKe}\hypertarget{FyKe}{}Yan Fyodorov and Jonathan Keating, \textit{Freezing transition and extreme values: random matrix theory, \(\zeta(\frac12 + it)\) and disordered landscapes}, Phil. Trans. R. Soc. A \textbf{372} (2014) no.\@\,20120503, 32.
\bibitem[57]{FGK}\hypertarget{FGK}{}Yan Fyodorov, Sven Gnutzmann, and Jonathan Keating, \textit{Extreme values of CUE characteristic polynomials: a numerical study}, J. Phys. A \textbf{51} (2018) no.\@\,46, 22.
\bibitem[58]{FHK}\hypertarget{FHK}{}Yan Fyodorov, Ghaith Hiary, and Jonathan Keating, \textit{Freezing Transition, Characteristic Polynomials of Random Matrices, and the Riemann Zeta-Function}, Phys. Rev. Lett. \textbf{108} (2012) no.\@\,170601, 
\bibitem[59]{hybrid}\hypertarget{hybrid}{}Gonek, S. M.; Hughes, C. P.; Keating, J. P., \textit{A hybrid Euler-Hadamard product for the Riemann zeta function}, Duke Math. J. \textbf{136} no.\@\,3, (2007) pp.\@\,507-549.
\bibitem[60]{XG}\hypertarget{XG}{}Xavier Gourdon, \textit{Computation of zeros of the Zeta function}, preprint; http:\slash{}\slash{}numbers.computation.free.fr\slash{}Constants\slash{}Miscellaneous\slash{}zetazeroscompute.html (2004) 
\bibitem[61]{gram}\hypertarget{gram}{}J. Gram, \textit{Sur les zéros de la fonction \(\zeta(s)\) de Riemann}, Acta Math \textbf{27} (1903) pp.\@\,289-304.
\bibitem[62]{HH}\hypertarget{HH}{}Cătălin Hanga and Christopher Hughes, \textit{Probabilistic models for Gram's Law}, preprint; arXiv: 1911.03190; (2020) 
\bibitem[63]{HL}\hypertarget{HL}{}G. H. Hardy and J. E. Littlewood, \textit{Contributions to the theory of the Riemann zeta-function and the theory of the distribution of primes}, (1918) Acta Mathematica pp.\@\,119-196.\textbf{41} 
\bibitem[64]{Harper1}\hypertarget{Harper1}{}A. Harper, \textit{On the partition function of the Riemann zeta function, and the Fyodorov-{}-{}Hiary-{}-{}Keating conjecture}, preprint; arXiv: 1906.05783; (2019) 
\bibitem[65]{Hej}\hypertarget{Hej}{}Dennis Hejhal, \textit{On the distribution of zeros of a certain class of Dirichlet series}, Internat. Math. Res. Notices no.\@\,4, (1992) pp.\@\,83-91.
\bibitem[66]{Hia}\hypertarget{Hia}{}Ghaith Hiary, \textit{Personal website}, https:\slash{}\slash{}people.math.osu.edu\slash{}hiary.1\slash{}fastmethods.html (Accessed March 3, 2022) 
\bibitem[67]{HiOd}\hypertarget{HiOd}{}Ghaith Hiary and Andrew Odlyzko, \textit{The zeta function on the critical line: numerical evidence for moments and random matrix theory models}, Math. Comp. \textbf{81} no.\@\,279, (2012) pp.\@\,1723-1752.
\bibitem[68]{HPZ}\hypertarget{HPZ}{}A. Huckleberry, A. Püttmann, and M. R. Zirnbauer, \textit{Haar expectations of ratios of random characteristic polynomials}, Complex Anal. Synerg. no.\@\,1, (2016) \textbf{2} Paper No. 1, 73 pp..
\bibitem[69]{HKO}\hypertarget{HKO}{}C.P. Hughes, J.P. Keating, and Neil O'Connell, \textit{Random matrix theory and the derivative of the Riemann zeta function}, R. Soc. Lond. Proc. Ser. A Math. Phys. Eng. Sci. no.\@\,2003, \textbf{456} (2000) pp.\@\,2611-2627.
\bibitem[70]{hutch}\hypertarget{hutch}{}J. I. Hutchinson, \textit{On the roots of the Riemann zeta-function}, Trans. Amer. Math. Soc. no.\@\,27, (1925) pp.\@\,49-60.
\bibitem[71]{Ing}\hypertarget{Ing}{}A. E. Ingham, \textit{Mean-value theorems in the theory of the Riemann zeta-function}, Proceedings of the London Mathematical Society no.\@\,92, (1926) \textbf{27} pp.\@\,273-300.
\bibitem[72]{Iv1}\hypertarget{Iv1}{}A. Ivić, \textit{On some results concerning the Riemann Hypothesis}, \pubtitle{Analytic Number Theory}LMS LNS no.\@\,247, Cambridge University Press; Y. Motohashi, (1997) pp.\@\,139-167.
\bibitem[73]{Iv2}\hypertarget{Iv2}{}A. Ivić, \textit{On some reasons for doubting the Riemann Hypothesis}, preprint; arXiv: 0311162; (2003) 
\bibitem[74]{kotnik}\hypertarget{kotnik}{}Tadej Kotnik, \textit{Computational estimation of the order of \(\zeta(1/2+it)\)}, Math. Comp. \textbf{73} no.\@\,246, (2004) pp.\@\,949-956.
\bibitem[75]{KaSa}\hypertarget{KaSa}{}N. M. Katz and P. Sarnak, \textit{Random matrices, Frobenius eigenvalues, and monodromy}, AMS Colloquium Publications \textbf{45} AMS, Providence, RI; (1999) 
\bibitem[76]{KS1}\hypertarget{KS1}{}J. P. Keating and N. C. Snaith, \textit{Random matrix theory and \(\zeta(\frac12 +it)\)}, Comm. Math. Phys. \textbf{214} (2000) pp.\@\,57-89.
\bibitem[77]{KS2}\hypertarget{KS2}{}J. P. Keating and N. C. Snaith, \textit{Random matrix theory and \(L\)-functions at \(s=\frac12 \)}, Comm. Math. Phys. \textbf{214} (2000) pp.\@\,91-110.
\bibitem[78]{Ki}\hypertarget{Ki}{}Haseo Ki, \textit{The Riemann \(Xi\)-function under repeated differentiation}, J. Number Theory \textbf{120} no.\@\,1, (2006) pp.\@\,120-131.
\bibitem[79]{KN}\hypertarget{KN}{}Rowan Killip and Irina Nenciu, \textit{Matrix models for circular ensembles.}, Int. Math. Res. Not. no.\@\,50, (2004) pp.\@\,2665-2701.
\bibitem[80]{lev}\hypertarget{lev}{}Norman Levinson, \textit{More than one third of zeros of Riemann's zeta-function are on \(\sigma=1/2\)}, Advances in Math. \textbf{13} (1974) pp.\@\,383-436.
\bibitem[81]{LMQ-H}\hypertarget{LMQ-H}{}Meghann Moriah Lugar, Micah B. Milinovich, and Emily Quesada-Herrera, \textit{On the number variance of zeta zeros and a conjecture of Berry}, Mathematika \textbf{69} 2arXiv 2211.14918; (2023) 
\bibitem[82]{Meh}\hypertarget{Meh}{}M. Mehta, \textit{Random Matrix Theory}, Academic Press. Boston; (1991) 
\bibitem[83]{Mez}\hypertarget{Mez}{}Francesco Mezzadri, \textit{How to generate random matrices from the classical compact groups}, Notices Amer. Math. Soc. \textbf{54} no.\@\,5, (2007) pp.\@\,592-604.
\bibitem[84]{Mon}\hypertarget{Mon}{}H.L. Montgomery, \textit{The pair correlation of zeros of the Riemann zeta-function}, Proc. Symp. Pure Math. \textbf{24} (1973) pp.\@\,81-93.
\bibitem[85]{New}\hypertarget{New}{}Charles M. Newman, \textit{Fourier transforms with only real zeros}, Proc. Amer. Math. Soc. \textbf{61} no.\@\,2, (1977) pp.\@\,245-251.
\bibitem[86]{OdlOld}\hypertarget{OdlOld}{}A. Odlyzko, \textit{The \(10^{20}\)th zero of the Riemann zeta-function and 70 million of its neighbors.}, Preprint (1989) 
\bibitem[87]{odl12}\hypertarget{odl12}{}A. Odlyzko, \textit{Zeros number \(10^{12}+1\) through \(10^{12}+10^4\) of the Riemann zeta function}, available at http:\slash{}\slash{}www.dtc.umn.edu\slash{}\textasciitilde{}odlyzko\slash{}zeta\textunderscore{}tables; 
\bibitem[88]{Odl}\hypertarget{Odl}{}A. Odlyzko, \textit{On the distribution of spacings between zeros of the zeta function}, Math. Comp. \textbf{48} (1987) pp.\@\,273-308.
\bibitem[89]{PlTr}\hypertarget{PlTr}{}Dave Platt and Tim Trudgian, \textit{The Riemann hypothesis is true up to \(3\cdot10^{12}\)}, Bull. Lond. Math. Soc. \textbf{53} no.\@\,3, (2021) pp.\@\,792-797.
\bibitem[90]{prattetal}\hypertarget{prattetal}{}Kyle Pratt, Nicolas Robles, Alexandru Zaharescu, and Dirk Zeindler, \textit{More than five-twelfths of the zeros of \(\zeta\) are on the critical line}, Res. Math. Sci. \textbf{7} no.\@\,2, (2020) paper no. 2, 74.
\bibitem[91]{PTX}\hypertarget{PTX}{}The PreTeXt project, \textit{The PreTeXt document authoring system}, https:\slash{}\slash{}pretextbook.org 
\bibitem[92]{Rad1}\hypertarget{Rad1}{}Maksym Radziwiłł, \textit{Large deviations in Selberg's central limit theorem}, arXiv: 1108.5092; (2011) 
\bibitem[93]{RhVa}\hypertarget{RhVa}{}Rémi Rhodes and Vincent Vargas, \textit{Gaussian multiplicative chaos and applications: a review}, Probability Surveys \textbf{11} (2014) pp.\@\,315-392.arXiv 1305.6221; 
\bibitem[94]{RT}\hypertarget{RT}{}Brad Rodgers and Terence Tao, \textit{The de Bruijn-Newman constant is non-negative}, Forum Math. Pi no.\@\,e6, \textbf{8} (2020) 62.
\bibitem[95]{Rig}\hypertarget{Rig}{}Mattia Righetti, \textit{Zeros of combinations of Euler products for \(\sigma\gt1\)}, Monatsh. Math. \textbf{180} no.\@\,2, (2016) pp.\@\,337-356.
\bibitem[96]{Rub1}\hypertarget{Rub1}{}Michael Rubinstein, \textit{Low-lying zeros of L-functions and random matrix theory}, Duke Math. J. \textbf{109} no.\@\,1, (2001) pp.\@\,147-181.
\bibitem[97]{Rub2}\hypertarget{Rub2}{}Michael Rubinstein, \textit{Computational methods and experiments in analytic number theory}, \pubtitle{Recent perspectives in random matrix theory and number theory, London Math. Soc. Lecture Note Ser.}Cambridge Univ. Press; no.\@\,322, (2005) pp.\@\,425-506.
\bibitem[98]{RS}\hypertarget{RS}{}Z. Rudnick and P. Sarnak, \textit{Zeros of principal \(L\)-functions and random matrix theory}, Duke Math. J. \textbf{81} (1996) pp.\@\,269-322.
\bibitem[99]{Saias}\hypertarget{Saias}{}Eric Saias and Andreas J. Weingartner, \textit{Zeros of Dirichlet series with periodic coefficients}, Acta Arithmetica no.\@\,4, \textbf{140} (2008) pp.\@\,335-344.
\bibitem[100]{SaksWebb}\hypertarget{SaksWebb}{}E. Saksman and C. Webb, \textit{The Riemann zeta function and Gaussian multiplicative chaos: statistics on the critical line}, Ann. Probab. no.\@\,2, \textbf{48} (2020) pp.\@\,2680-2754.
\bibitem[101]{SaksWebbSurv}\hypertarget{SaksWebbSurv}{}E. Saksman and C. Webb, \textit{On the Riemann zeta function and Gaussian multiplicative chaos}, Advancements in complex analysis -{}-{} from theory to practice Springer; (2020) pp.\@\,473-496.
\bibitem[102]{saw}\hypertarget{saw}{}Will Sawin, \textit{A refined random matrix model for function field L-functions}, Preprint (2024) https:\slash{}\slash{}arxiv.org\slash{}abs\slash{}2409.02876; 
\bibitem[103]{SelS}\hypertarget{SelS}{}A. Selberg, \textit{Contributions to the theory of the Riemann zeta-function}, Arch. Math. Naturvid. \textbf{48} no.\@\,5, (1946) pp.\@\,89-155.
\bibitem[104]{Sel}\hypertarget{Sel}{}A. Selberg, \textit{Old and new conjectures and results about a class of Dirichlet series}, \pubtitle{Proceedings of the Amalfi Conference on Analytic Number Theory (Maiori, 1989)}Univ. Salerno, Salerno; (1992) 
\bibitem[105]{shanker}\hypertarget{shanker}{}O. Shanker, \textit{Good-to-bad Gram point ratio for Riemann zeta-function}, Experimental Math. \textbf{30} no.\@\,1, (2021) pp.\@\,76-85.
\bibitem[106]{soundICM}\hypertarget{soundICM}{}K. Soundararajan, \textit{The distribution of values of zeta and L-functions}, ICM-{}-{}International Congress of Mathematicians. Vol. 2. Plenary lectures pp.\@\,1260-1310.(2023) arXiv 2112.03389; 
\bibitem[107]{speiser}\hypertarget{speiser}{}A. Speiser, \textit{Geometrisches zur Riemannschen Zetafunktion}, Math Ann. \textbf{110} (1935) pp.\@\,514-521.
\bibitem[108]{Tih1}\hypertarget{Tih1}{}Norbert Tihanyi, \textit{Fast method for locating peak values of the Riemann zeta function on the critical line}, 2014 16th International Symposium on Symbolic and Numeric Algorithms for Scientific Computing IEEE; (2014) pp.\@\,91-96.
\bibitem[109]{Tih2}\hypertarget{Tih2}{}Norbert Tihanyi, Atilla Kovács, and József Kovács, \textit{Computing extremely large values of the Riemann zeta function}, J. Grid Computing \textbf{15} (2017) pp.\@\,527-534.
\bibitem[110]{T}\hypertarget{T}{}E. C. Titchmarsh, \textit{The theory of the Riemann zeta-function. Second edition. Edited and with a preface by D. R. Heath-Brown}, The Clarendon Press, Oxford University Press, New York; (1986) 
\bibitem[111]{TT1}\hypertarget{TT1}{}T. Trudgian, \textit{On the success and failure of Gram's law and the Rosser rule}, Acta Arith. \textbf{148} (2011) pp.\@\,225-256.
\end{referencelist}
\end{references-section-numberless}
\end{document}